\documentclass{amsart}

\usepackage{amssymb}
\usepackage{dsfont}

\newtheorem{theorem}{Theorem}[section]

\newtheorem{proposition}[theorem]{Proposition}
\newtheorem{corollary}[theorem]{Corollary}
\newtheorem{lemma}[theorem]{Lemma}

\theoremstyle{definition}
\newtheorem{definition}[theorem]{Definition}

\theoremstyle{remark}
\newtheorem*{remark}{Remark}
\newtheorem*{notation}{Notation}

\numberwithin{equation}{section}

%
%
\newcommand{\myskip}{\vspace{9pt}}
\newcommand{\nsubset}{\not\subset}
\newcommand{\enproof}{\hspace*{\stretch{1}}\qedsymbol}  

\newcommand{\cont}{\mathcal{I}}  
\newcommand{\norm}[1]{\|#1\|}  
\newcommand{\Lone}{\mathit{L}^1}

\newcommand{\linfty}{\ell^\infty}

\newcommand{\C}{\mathcal{C}}
\newcommand{\set}[1]{\left\{#1\right\}}  

\newcommand{\complexs}{\mathds{C}}    

\newcommand{\reals}{\mathds{R}}    
\newcommand{\naturals}{\mathds{N}}    
\newcommand{\integers}{\mathds{Z}}    

\newcommand{\unitization}[1]{#1^{\#}} 
\newcommand{\unitA}{\unitization{A}}  
\newcommand{\tuple}[1]{\boldsymbol{#1}}
\newcommand{\xiota}{x^{(P)}}

\newcommand{\setofprimes}{\mathfrak{P}}
\newcommand{\setofunions}{\mathfrak{Q}}

\newcommand{\fsetofunions}{\mathfrak{G}}
\newcommand{\quotoel}[2]{\mbox{$#1$$:$$#2$}}
\newcommand{\unit}{\mathbf{e}}

\newcommand{\F}{\mathcal{F}}

\newcommand{\G}{\mathcal{G}}

\newcommand{\U}{\mathcal{U}}

\newcommand{\abs}[1]{\left|#1\right|}
\newcommand{\infinitesimals}[1]{\left(#1\right)^\circ}
\newcommand{\ultrapower}{\complexs^{\kappa}/\U}   
\newcommand{\cardinal}[1]{\left|#1\right|}
\newcommand{\variety}{\mathcal{V}}
\newcommand{\primeradical}[1]{\sqrt{#1}}
\newcommand{\usingCH}{{\rm(CH)}\phantom{ii}}  
\newcommand{\continuum}{\mathfrak{c}}

\newcommand{\onepointcompactification}[1]{#1^\flat}
\providecommand{\rad}{\mathop{\rm rad}} 

\newcommand{\order}{\mathop{{\rm o}}}

\providecommand{\rad}{\mathop{\rm rad}} 
\newcommand{\zero}{\textbf{Z}}

\newcommand{\orderlevel}[2]{#1^{(#2)}}
\newcommand{\level}[2]{\partial^{(#2)}#1}

%
%

\begin{document}

\title
{The kernels and continuity ideals of homomorphisms from $\C_0(\Omega)$}

\dedicatory{In memoriam: Graham Robert Allan, 1937-2007}

\author{Hung Le Pham}
\address{Department of Mathematical and Statistical Sciences,
    University of Alberta, Edmonton, Alberta T6G 2G1, Canada}
\email{hlpham@math.ualberta.ca}

\thanks{This research is supported
by a Killam Postdoctoral Fellowship and a Honorary PIMS
Postdoctoral Fellowship}

\subjclass[2000]{46H40, 46J10}

\keywords{Banach algebra, algebra of continuous functions,
automatic continuity, prime ideal, locally compact space}

\begin{abstract}
We give a description of the continuity ideals and the kernels of
homomorphisms from the algebras of continuous functions on locally
compact spaces into Banach algebras.
\end{abstract}

\maketitle

\section{Introduction}
\label{fip_introduction}

Let $\theta :A\to B$ be a homomorphism from a commutative Banach
algebra $A$ into a Banach algebra $B$. The \emph{continuity ideal}
of $\theta$ is defined to be the ideal
\[
    \cont(\theta) = \set{a\in A:\ \textrm{the map}\ b\mapsto \theta(ab),\ A\to B, \ \textrm{is
continuous}};
\]
this ideal contains every ideal $I$ in $A$ on which $\theta$ is
continuous. Moreover, in the case where $A=\C_0(\Omega)$, for a
locally compact space $\Omega$, then $\theta$ is continuous on
$\cont(\theta)$.

This is a continuation of \cite{pham2005b}. Here, we aim to
characterize the ideals which are the kernels or the continuity
ideals of homomorphisms from $\C_0(\Omega)$ into Banach algebras.
This is, in some sense, a last piece of the picture of homomorphisms
from $\C_0(\Omega)$ into Banach algebras. So what do we know about
these objects so far?

Denoted by $\abs{\,\cdot\,}_\Omega$ the uniform norm on $\Omega$.
\emph{For brevity, we shall call a homomorphism into a radical
Banach algebra a radical homomorphism}.

It is a theorem of Kaplansky \cite{kaplansky1949} that, for each
algebra norm $\norm{\cdot\,\!}$ on $\C_0(\Omega)$ and each
$f\in\C_0(\Omega)$, we have $\norm{f}\ge\abs{f}_\Omega$. This
essentially gives the description of all continuous homomorphisms
from $\C_0(\Omega)$ into Banach algebras. It was immediately asked
\cite{kaplansky1949} whether discontinuous homomorphisms from
$\C_0(\Omega)$ exist. In 1970s, this question was resolved in the
positive independently by Dales \cite{dales1979} and Esterle
\cite{esterle1978}, \cite{esterle1978b}, \cite{esterle1978c}.
Moreover, they showed that, assuming the Continuum Hypothesis
(CH), for each (non-compact) locally compact space $\Omega$ and
each non-modular prime ideal $P$ in $\C_0(\Omega)$ with
$\cardinal{\C_0(\Omega)/P}=\continuum$, there exists a radical
homomorphism from $\C_0(\Omega)$ with kernel precisely equal to
$P$. (For more details see \cite{dales2000}.)

Preceding this resolution was Bade and Curtis's theorem
\cite{badecurtis1960} which shows that each discontinuous
homomorphism from $\C_0(\Omega)$ into a Banach algebra $B$ can be
decomposed into a sum of a continuous homomorphism and a finite
number of discontinuous linear maps, each of which is a
homomorphism into the radical of $B$ when restricted to a maximal
ideal of $\C_0(\Omega)$. The following statement of the theorem also
includes some improvements from \cite{esterle1978} and
\cite{sinclair1975} (see also \cite{dales2000},
\cite{sinclair1976}); see \S\ref{definition} for notations.
\begin{theorem}
    \label{fip_bcse}
    Let $\Omega$ be a locally compact space, and let $\theta$ be a discontinuous homomorphism
    from $\C_0(\Omega)$ into a Banach algebra $B$. Suppose that
    $\theta(\C_0(\Omega))$ is dense in $B$.
    \begin{enumerate}
        \item The continuity ideal $\cont(\theta)$ is the largest ideal of $\C_0(\Omega)$ on which $\theta$ is continuous.
        \item There exists a non-empty finite subset $\set{p_1,\ldots,p_n}$ of
        $\onepointcompactification{\Omega}$ such that
        \[
            \bigcap_{i=1}^n J_{p_i}\subset\cont(\theta)\subset\bigcap_{i=1}^n M_{p_i}.
        \]
        \item There exists a continuous homomorphism
        $\mu:\C_0(\Omega)\to B$ such that
        \[
            B=\mu(\C_0(\Omega))\oplus \rad B,\quad \mu(\bigcap_{i=1}^nM_{p_i})\cdot\rad B=\set{0},
        \]
        and $\mu=\theta$ on a dense subalgebra of $\C_0(\Omega)$ containing $\cont(\theta)$.
        \item Set $\nu=\theta-\mu$. Then $\nu$ maps into $\rad B$, and the restriction of
        $\nu$ to $\bigcap_{i=1}^n M_{p_i}$ is a homomorphism $\nu^\prime$ onto a dense subalgebra of $\rad B$ such that
        $\cont(\theta)=\ker\nu^\prime$.
        \item There exist linear maps $\nu_1,\ldots, \nu_n:\C_0(\Omega)\to \rad B$ such
        that
        \begin{enumerate}
            \item $\nu=\nu_1+\cdots+\nu_n$,
            \item each $\nu_i|M_{p_i}$ $(1\le i\le n)$ is a non-zero radical homomorphism, and
            \item $\nu_i(\C_0(\Omega))\cdot\nu_j(\C_0(\Omega))=\set{0}$ for each $1\le i\neq j\le n$.
        \end{enumerate}
        \item The ideals $\ker\theta$ and $\cont(\theta)$ are always intersections of prime ideals.\enproof
    \end{enumerate}
\end{theorem}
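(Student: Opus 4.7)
The plan is to use the standard fact that, in a commutative ring, an ideal is an intersection of prime ideals if and only if it is semiprime---i.e., $f^k\in I$ for some $k\ge 1$ forces $f\in I$. So I must verify this implication for each of $I=\ker\theta$ and $I=\cont(\theta)$.

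For $I=\ker\theta$, suppose $\theta(f)^k=0$. Since $\theta(\C_0(\Omega))$ is dense in $B$ and commutative, $B$ itself is commutative, and then $B/\rad B$ is a commutative semisimple Banach algebra, hence contains no nonzero nilpotent element. So $\theta(f)\in\rad B$, and in view of the direct sum $B=\mu(\C_0(\Omega))\oplus\rad B$ of (iii), this forces $\mu(f)=0$ and $\theta(f)=\nu(f)$. Using (v) and the orthogonality $\nu_i(\C_0(\Omega))\cdot\nu_j(\C_0(\Omega))=\set{0}$ for $i\ne j$, the expansion $\nu(f)^k=(\sum_i\nu_i(f))^k$ collapses to $\sum_i\nu_i(f)^k=0$, and multiplying by $\nu_j(f)$ yields $\nu_j(f)^{k+1}=0$ for each $j$. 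To conclude $\nu_j(f)=0$, I would reduce---after subtracting a suitable function of $\C_0(\Omega)$ matching $f$ at $p_j$ and using the linearity of $\nu_j$---to the radical homomorphism $\nu_j|M_{p_j}$, and then invoke the semiprimality of the kernel of a radical homomorphism from $\C_0(\Omega)$. Summing then gives $\nu(f)=0$ and hence $\theta(f)=0$.

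For $I=\cont(\theta)$, suppose $f^k\in\cont(\theta)$. By (ii), $\cont(\theta)\subseteq\bigcap_iM_{p_i}$; the latter is closed, hence semiprime (any closed ideal in $\C_0(\Omega)$ has semisimple $C^{*}$-quotient), so $f\in\bigcap_iM_{p_i}$. Part (iv) presents $\nu^\prime$ (the restriction of $\nu$ to $\bigcap_iM_{p_i}$) as a radical homomorphism with $\ker\nu^\prime=\cont(\theta)$, and then $\nu^\prime(f)^k=\nu^\prime(f^k)=0$; the same semiprimality of the kernel of a radical homomorphism yields $\nu^\prime(f)=0$ and so $f\in\cont(\theta)$.

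Both arguments rest on one external input: that the kernel of a radical homomorphism from $\C_0(\Omega)$---equivalently, from $\bigcap_iM_{p_i}$, which is itself of the form $\C_0(\Omega\setminus F)$ for the finite set $F=\set{p_1,\ldots,p_n}\cap\Omega$---is semiprime (and indeed prime, by classical theorems of Esterle). This is the principal obstacle. I expect to extract it from the companion paper \cite{pham2005b}; failing that, one could give a direct proof using the standard factorization $f=g\cdot h^m$ in $\C_0(\Omega)$ with $\abs{g}=\abs{h}=\abs{f}^{1/(m+1)}$, combined with Esterle/Sinclair-type estimates on nilpotents in radical Banach algebras, to force $\sigma(f)^k=0\Rightarrow\sigma(f)=0$ for a radical homomorphism $\sigma$.
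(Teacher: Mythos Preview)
The paper does not prove this theorem; it is quoted as background from Bade--Curtis with refinements by Esterle and Sinclair, and the statement simply ends with a \textsc{qed} box. So there is no ``paper's own proof'' to compare with beyond the references. Your proposal, by contrast, gives an actual argument for part~(vi) using parts (i)--(v) as input, which is a perfectly reasonable way to organise the result.

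Your argument for $\cont(\theta)$ is clean and correct: $f^k\in\cont(\theta)\subset\bigcap_i M_{p_i}$ forces $f\in\bigcap_i M_{p_i}$ since the latter is a closed (hence semiprime) ideal, and then $\nu'(f)^k=\nu'(f^k)=0$ with $\ker\nu'$ semiprime gives $f\in\ker\nu'=\cont(\theta)$.

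For $\ker\theta$ there is a soft spot. After reaching $\nu_j(f)^{k+1}=0$ you write that you ``would reduce\dots after subtracting a suitable function matching $f$ at $p_j$'' to land in $M_{p_j}$. But $\nu_j$ is only \emph{linear} on $\C_0(\Omega)$, not multiplicative, so subtracting $g$ with $g(p_j)=f(p_j)$ gives $\nu_j(f)=\nu_j(f-g)+\nu_j(g)$ with no control on $\nu_j(g)$ or on powers of $\nu_j(f-g)$; the reduction does not go through as stated. The fix is much simpler than the detour through the $\nu_j$'s: observe that $\ker\theta\subset\cont(\theta)$ (if $\theta(a)=0$ then $b\mapsto\theta(ab)=0$ is continuous), so $f^k\in\ker\theta\subset\cont(\theta)\subset\bigcap_i M_{p_i}$ already gives $f\in\bigcap_i M_{p_i}$. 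Now $\mu(f)=0$ (as you showed), hence $\nu'(f^k)=\theta(f^k)-\mu(f^k)=0$, and since $\nu'$ is a homomorphism on $\bigcap_i M_{p_i}$ you get $\nu'(f)^k=0$, whence $\nu'(f)=0$ and $\theta(f)=\mu(f)+\nu(f)=0$. In other words, the $\ker\theta$ case reduces to the $\cont(\theta)$ case and the decomposition $\nu=\sum\nu_j$ is not needed at all.

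The external input you identify---semiprimeness (indeed primeness) of the kernel of a radical homomorphism from $\C_0(\Omega)$---is exactly the right one, and is the Esterle/Sinclair result the paper itself invokes later (see the lemma preceding Corollary~\ref{main_theorem_abstract}).
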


In particular, this result emphasizes the roles of prime ideals
and of radical homomorphisms as building
blocks for general discontinuous homomorphisms from
$\C_0(\Omega)$: If we know about the radical homomorphisms from
$\C_0(\Omega)$ we will know about the homomorphisms from
$\C_0(\Omega)$ into Banach algebras. Dales and Esterle's theorem
shows how to construct radical homomorphisms from $\C_0(\Omega)$
with kernel being finite intersection of (non-modular) prime
ideals. In fact, for some spaces $\Omega$, the kernels of radical
homomorphisms from $\C_0(\Omega)$ are always finite intersections
of prime ideals (\cite{esterle1978}, \cite{pham2005b}).

However, in \cite{pham2005b}, we show that for most metrizable
non-compact locally compact spaces $\Omega$, for example $\reals$,
there exists a radical homomorphism from $\C_0(\Omega)$ whose
kernel is not the intersection of any finite number of prime
ideals.

In this paper, we shall show that the kernel of a radical
homomorphism from $\C_0(\Omega)$ is always the intersection of a
relatively compact family of (non-modular) prime ideals. We also
prove that, assuming the Continuum Hypothesis (CH), under a minor
cardinality condition, when restricted to those ideals that are
intersections of countably many prime ideals, the kernels of
radical homomorphisms from $\C_0(\Omega)$ are exactly the
intersections of relatively compact family of non-modular prime
ideals in $\C_0(\Omega)$. Similar result holds for continuity
ideals of homomorphisms from $\C_0(\Omega)$ into Banach algebras.
(See \S\ref{main_results}.)

\begin{remark} The
Continuum Hypothesis is required in construction of discontinuous
homomorphisms from $\C_0(\Omega)$ into Banach algebras, for it has
been proved by Solovay and Woodin that it is relatively consistent
with ZFC that all such homomorphisms are continuous (see
\cite{daleswoodin1987} for the proof and references).
\end{remark}

\section{Preliminary definitions and notations}
\label{definition}

Let $A$ be a commutative algebra. The (\emph{conditional})
\emph{unitization} $\unitA$ of $A$ is defined as the algebra $A$
itself if $A$ is unital, and as $A$ with identity adjoined
otherwise. The identity of $\unitA$ is denoted by $\unit_A$.

A prime ideal or semiprime ideal in $A$ must be a proper ideal.
However, we consider $A$ itself as a finite intersection of prime
ideals (\emph{the intersection of the empty collection of prime
ideals}).

Define the \emph{prime radical} $\primeradical{I}$ of an ideal $I$
in $A$ to be the intersection of all prime ideals in $A$
containing $I$, so that
\[
    \primeradical{I}=\set{a\in A:\ a^n\in I \ \textrm{for some}\
n\in\naturals}\,.
\]

For each ideal $I$ in $A$ and each element $a\in \unitA$, define
the \emph{quotient of $I$ by $a$} to be the ideal
\[
    \quotoel{I}{a}\,=\set{b\in A:\ ab\in I}\,.
\]
Clearly we have $I \subset\quotoel{I}{a}$ in each case.

Let $I$ be an ideal in $\unitA$. A subset $S$ of $\unitA$ is
\emph{algebraically independent modulo} $I$ if
$p(a_1,\ldots,a_n)\notin I$ for each $n\in\naturals$, each
non-zero polynomial $p\in\complexs[X_1,\ldots,X_n]$, and each
$n$-tuple $(a_1,\ldots,a_n)$ of distinct elements of $S$. A
\emph{transcendence basis} for $\unitA$ modulo $I$ is a maximal
set among all the subsets of $\unitA$ which are algebraically
independent modulo $I$; such a basis always exists.

For an well-ordered set $\Lambda$, we denote by $\order(\Lambda)$
the ordinal that is order isomorphic to $\Lambda$.

For the definition of \emph{universal} algebras, see
\cite[Definition 5.7.8]{dales2000}. The important fact that we
need is the existence of universal radical Banach algebras. For
example, the integral domain $\Lone(\reals^+,\omega)$ is universal
for each radical weight $\omega$ bounded near the origin
\cite[Theorem 5.7.25]{dales2000}. Indeed, the class of universal
commutative radical Banach algebras has been characterized in
\cite{esterle1981} (see also \cite[Theorem 5.7.28]{dales2000}).

Let $B$ be a Banach algebra, and let $S$ be an indexing set.
Define $\linfty(S,B)$ to be the Banach algebra of all bounded
families $(b_\alpha:\alpha\in S)$ in $B$ under pointwise algebraic
operations and the supremum norm.

For a discussion of the theory of the algebras of continuous
functions, see \cite{dales2000}, \cite{daleswoodin1996} or
\cite{gillmanjerison1960}. Here, we just give some facts which are
needed in our discussion.

Let $\Omega$ be a locally compact space; the convention is that
locally compact spaces and compact spaces are Hausdorff. The
\emph{one-point compactification} of $\Omega$ is denoted by
$\onepointcompactification{\Omega}$. Denote by $\C_c(\Omega)$ the
\emph{algebra of compactly supported continuous functions} on
$\Omega$. For each $p\in \Omega$, define
\[
\begin{array}{rcl}
J_p &=& \set{f\in\C_0(\Omega):\ f\ \textrm{is zero on a neighbourhood of}\ p},\\
M_p &=& \set{f\in\C_0(\Omega):\ f(p)=0}.
\end{array}
\]
For $p$ being the point (at infinity) adjoined to $\Omega$ to
obtain $\onepointcompactification{\Omega}$, we also set
\[
J_p =\C_c(\Omega)\quad\textrm{and}\quad M_p=\C_0(\Omega).
\]

For each prime ideal $P$ in $\C_0(\Omega)$, there always exists a
unique point $p\in\onepointcompactification{\Omega}$ such that
$J_p\subset P\subset M_p$, we say that $P$ \emph{is supported at
the point} $p$. It can be seen that $P$ is modular if and only if
its support point belongs to $\Omega$.

It is an important fact that, for each prime ideal $P$ in
$\C_0(\Omega)$, the set of prime ideals in $\C_0(\Omega)$ which
contain $P$ is a chain with respect to the inclusion relation.

For each function $f$ continuous on $\Omega$, the \emph{zero set}
of $f$ is denoted by $\zero(f)$. The set of zero sets of
continuous functions on $\Omega$ is denoted by $\zero[\Omega]$.

A \emph{$z$-filter} $\F$ on $\Omega$ is a non-empty proper subset
of $\zero[\Omega]$ satisfying:
\begin{enumerate}
    \item $Z_1\cap Z_2$ belongs to $\F$ whenever both $Z_1$ and $Z_2$ belong to
    $\F$,
    \item if $Z_1\in\F$, $Z_2\in\zero[\Omega]$ and $Z_1\subset Z_2$, then $Z_2$ also belongs to
    $\F$.
\end{enumerate}
Each $z$-filter $\F$ corresponds to an ideal
\[
    \set{f\in\C(\Omega):\ \zero(f)\in\F},
\]
denoted by $\zero^{-1}[\F]$; each such ideal is called a
\emph{$z$-ideal}.

\section{Relatively compact families of prime ideals}

In this section, let $A$ be a commutative algebra.

\begin{definition}[cf. \cite{pham2005b} Definition 3.1]
    \label{pseudo_finiteness_definition}
An indexed family $(P_i)_{i\in S}$ of prime ideals in $A$ is
\emph{pseudo-finite} if $a\in P_i$ for all but finitely many $i\in
S$ whenever $a\in \bigcup_{i\in S} P_i$.
\end{definition}

For a pseudo-finite sequence $(P_n)$ of prime ideals, it is
obvious that $\bigcup_{n=1}^\infty P_n$ is either a prime ideal in
$A$ or the whole $A$.

\begin{definition}
    \label{compactness_definition}
    A family $\mathfrak{C}$ of prime ideals in $A$ is \emph{relatively compact}
    if every sequence of prime ideals in $\mathfrak{C}$ contains a pseudo-finite
    subsequence. The family $\mathfrak{C}$ is \emph{compact} if it is
    relatively compact and contains the union of each of its pseudo-finite sequence.
\end{definition}

Obviously, the union of finitely many pseudo-finite families is
relatively compact. In the rest of this section, we shall justify
our choice of terminology.

Denote by $\Pi$ the set of prime ideals in $A$. For
$a_1,a_2,\ldots, a_m$, and $b$ in $A$, define
\[
    \U_{a_1,\ldots,a_m}^b=\set{P\in\Pi:\ a_i\in P\ (1\le i\le m)\ \textrm{and}\ b\notin
P}.
\]
Then the collection of all $\U_{a_1,\ldots,a_m}^b$'s is a base for
a topology $\tau$. Indeed, by the primeness, we have
\[
    \U_{a_1,\ldots,a_m,c_1,\ldots,c_n}^{bd}=\U_{a_1,\ldots,a_m}^b\cap\U_{c_1,\ldots,c_n}^d.
\]
Moreover, we \emph{claim} that $\tau$ is Hausdorff. For, let
$P_1\neq P_2\in\Pi$. If $P_1\nsubset P_2$ and $P_2\nsubset P_1$,
say $a\in P_1\setminus P_2$ and $b\in P_2\setminus P_1$, then
$P_1\in \U_a^b$, $P_2\in \U_b^a$, and $\U_a^b\cap
\U_b^a=\emptyset$. Otherwise, say $P_1\subsetneq P_2$, choose
$b\in P_2\setminus P_1$ and $c\in A\setminus P_2$, then $P_1\in
\U_0^b$, $P_2\in \U_b^c$, and $\U_0^b\cap \U_b^c=\emptyset$.

Next, we \emph{claim} that $\U_0^u$ is $\tau$-compact ($u\in A$).
Indeed, we see that $\set{\U_a^u,\ \U_0^a:\ a\in A}$ is a subbasis
for the relative $\tau$-topology on $\U_0^u$, so by Alexander's
lemma, we need only to show that each cover of $\U_0^u$ by sets in
this subbasis has a finite subcover. So, let $E, F$ be subsets of
$A$ such that
\[
    \U_0^u=\bigcup_{a\in E}\U_a^u\cup\bigcup_{b\in F}\U_0^{b}.
\]
Set $S=\set{u^ma_1\cdots a_n:\ m,n\in\naturals,\ a_1,\ldots,a_n\in
E}$, and let $I$ be the ideal generated by $F$. Assume toward a
contradiction that $S\cap I=\emptyset$. Then since $S$ is closed
under multiplication, there exists a prime ideal $P$ such that
$P\supset I$ and $P\cap S=\emptyset$; this implies that $P\in
\U_0^u$ but $P\notin \bigcup_{a\in E}\U_a^u\cup\bigcup_{b\in
F}\U_0^{b}$, a contradiction. Thus, $S\cap I\neq\emptyset$, so
there exist $k,m,n\in\naturals$, $a_1,\ldots,a_m\in E$ and
$b_1,\ldots,b_n\in F$, and $c_1,\ldots, c_n\in A$ such that
$u^ka_1\cdots a_m=b_1c_1+\cdots b_nc_n$. We can then deduce that
\[
    \U_0^u=\bigcup_{i=1}^m\U_{a_i}^u\cup\bigcup_{j=1}^n\U_0^{b_j}.
\]
Thus $\tau$ is locally compact.

The one point compactification of $(\Pi,\tau)$ can be considered
as the set $\Pi\cup\set{A}$; a basis of neighbourhood for $A$ is
given by
\[
    \U_{b_1,\ldots,b_n}=\set{A}\cup\set{P\in\Pi:\ b_i\in P\ (1\le i\le n)}.
\]

\begin{proposition}
    Let $A$ be a commutative algebra. Denote by $\Pi$ the set of prime
    ideals in $A$. Define a topology $\tau$ as above.
    Then $(\Pi,\tau)$ is a totally disconnected locally compact
    space, and every [relative] compact family of prime ideals in
    $A$ is a [relatively] sequentially $\tau$-compact subset of
    $\Pi\cup\set{A}$.
\end{proposition}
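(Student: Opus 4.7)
The plan is to verify three statements in sequence: $(\Pi,\tau)$ is totally disconnected; every relatively compact family of prime ideals is a relatively sequentially $\tau$-compact subset of $\Pi\cup\{A\}$; and every compact family is a sequentially $\tau$-compact subset of $\Pi\cup\{A\}$. Local compactness has already been established in the paragraphs preceding the proposition.

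For total disconnectedness, I would show that each basic open set is in fact clopen. Writing $\U_{a_1,\ldots,a_m}^b = \U_{a_1}^b\cap\cdots\cap\U_{a_m}^b$, it suffices to treat $\U_a^b$. Its complement in $\Pi$ is the union of $\U_0^a$ with $\{P\in\Pi:b\in P\}$; the first piece is already a basic open, and the second equals the open set $\bigcup_{c\in A}\U_b^c$, because every proper ideal $P$ containing $b$ admits some $c\in A\setminus P$. Hence $\U_a^b$ is clopen, and combined with the Hausdorff property already proved this yields zero-dimensionality, and in particular total disconnectedness.

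For the sequential compactness assertions, I would take a sequence $(P_n)$ in $\mathfrak{C}$, extract a pseudo-finite subsequence $(P_{n_k})$ using relative compactness, and let $P=\bigcup_k P_{n_k}$. By the observation immediately following the definition of pseudo-finiteness, $P$ is either a prime ideal or all of $A$, and the claim is that in either case $P_{n_k}\to P$ in $\Pi\cup\{A\}$. When $P\in\Pi$, I would check a basic neighbourhood $\U_{a_1,\ldots,a_m}^b$ of $P$: pseudo-finiteness gives $a_i\in P_{n_k}$ for all but finitely many $k$, while $b\notin P$ together with $P_{n_k}\subset P$ forces $b\notin P_{n_k}$ for every $k$. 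When $P=A$, a basic neighbourhood of the point at infinity has the form $\U_{b_1,\ldots,b_n}$, and pseudo-finiteness again places $b_i$ in $P_{n_k}$ for all but finitely many $k$. This establishes relative sequential compactness. For the compact case, the definition of compactness forces $P\in\mathfrak{C}$, so in particular $P$ is a prime ideal and the limit lies in $\mathfrak{C}$ itself, giving sequential compactness.

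I do not foresee a serious obstacle: the argument is essentially a direct dictionary between the combinatorial notion of pseudo-finiteness and the topological notion of convergence. The only point requiring a little care is the uniform handling of the two kinds of basic neighbourhoods in $\Pi\cup\{A\}$—those of an interior point of $\Pi$ and those of the adjoined point $A$—but the asymmetry is superficial and pseudo-finiteness disposes of both cases in exactly the same way.
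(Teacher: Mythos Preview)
Your proposal is correct and follows essentially the same approach as the paper: the paper's proof section only says ``it remains to prove the last assertion'' and then argues, as you do, that a pseudo-finite sequence $\tau$-converges to its union $P$ in $\Pi\cup\{A\}$, leaving the verification of the two neighbourhood cases to the reader. Your additional argument that the basic sets $\U_a^b$ are clopen (hence $\Pi$ is zero-dimensional) fills in a detail the paper treats as already established in the discussion preceding the proposition.
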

\begin{proof}
It remains to prove the last assertion. We \emph{claim} that a
pseudo-finite sequence $(P_n)$ of prime ideals in $A$ is
$\tau$-convergent in $\Pi\cup\set{A}$. In fact, set
$P=\bigcup_{n=1}^\infty P_n$. Then either $P\in \Pi$ or $P=A$. In
both cases, we can check that $(P_n)$ $\tau$-converges to $P$.
\end{proof}

\begin{remark}
Suppose that $A$ is unital. Then $\Pi=\U_0^{\unit_A}$, and so
$(\Pi,\tau)$ is compact. In the above proposition, we can replace
$(\Pi\cup\set{A},\tau)$ by $(\Pi,\tau)$.
\end{remark}

\begin{remark}
Suppose that $A=\C_0(\Omega)$ for some locally compact space
$\Omega$. Then for each pseudo-finite sequence $(P_n)$ of prime
ideals in $\C_0(\Omega)$, the union $\bigcup_{n=1}^\infty P_n$ is
in fact a prime ideal in $\C_0(\Omega)$. Again, in the above
proposition, we can replace $(\Pi\cup\set{A},\tau)$ by
$(\Pi,\tau)$.
\end{remark}

\begin{remark}
Let us instead consider a topology $\sigma$ on $\Pi\cup\set{A}$
generated by
\[
    \U_{a_1,\ldots,a_m}^Q=\set{P\in\Pi\cup\set{A}:\ P\subset Q\ \textrm{and}\ a_i\in P\ (1\le i\le
m)},
\]
where $Q$ is either a semiprime ideal in $A$ or $A$ itself, and
$a_1,\ldots, a_m\in Q$. Then a sequence of prime ideals in $A$ is
pseudo-finite if and only if it is convergent in
$(\Pi\cup\set{A},\sigma)$, and so a family of prime ideals in $A$
is [relatively] compact if and only if it is [relatively]
sequentially compact in $(\Pi\cup\set{A},\sigma)$. However, in
general, $\sigma$ is neither Hausdorff nor locally compact. In the
case where $A=\C_0(\Omega)$, then $(\Pi\cup\set{A},\sigma)$ is
Hausdorff (but not locally compact).
\end{remark}

\section{Abstract continuity ideals and relatively compact families of prime
ideals} \label{abstract_continuity_ideals}

Let $\theta :A\to B$ be a homomorphism from a commutative Banach
algebra $A$ into a Banach algebra $B$. Let $(a_n:\ n\in\naturals)$
be a sequence in $A$. Then
\[
    \quotoel{\cont(\theta)}{a_1a_2\cdots a_n}\subset
    \quotoel{\cont(\theta)}{a_1a_2\cdots a_{n+1}}\qquad (n\in\naturals).
\]
It follows easily from the stability lemma (see
\cite[5.2.7]{dales2000} or \cite[1.6]{sinclair1976} for the
statement and proof) that there exists $n_0$ such that
\[
    \quotoel{\cont(\theta)}{a_1a_2\cdots a_n}=
    \quotoel{\cont(\theta)}{a_1a_2\cdots a_{n+1}}\qquad (n\ge n_0).
\]
Thus $\cont(\theta)$ is an \emph{abstract continuity ideal} in the
following sense.

\begin{definition}
    Let $A$ be a commutative algebra. An ideal $I$ is an
    \emph{abstract continuity ideal} if, for each sequence $(a_n)$ in $A$, there exists $n_0$ such that
    \[
        \quotoel{I}{a_1a_2\cdots a_n}=
        \quotoel{I}{a_1a_2\cdots a_{n+1}}\qquad (n\ge n_0).
    \]
\end{definition}

\begin{proposition}
    \label{abstract_first_part}
    Let $\setofprimes$ be a relatively compact family of
    prime ideals in a commutative algebra $A$. Then $\bigcap\set{P:P\in\setofprimes}$ is
    an abstract continuity ideal in $A$.
\end{proposition}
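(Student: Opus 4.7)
The plan is to use primeness to rewrite each quotient ideal $\quotoel{I}{a_1\cdots a_n}$, where $I=\bigcap_{P\in\setofprimes}P$, in a form that exposes its dependence on $\setofprimes$. Namely, $b\in\quotoel{I}{a_1\cdots a_n}$ iff $a_1\cdots a_n b\in P$ for every $P\in\setofprimes$, and by primeness this happens iff $b\in P$ or some $a_i\in P$. Setting $S_n=\set{P\in\setofprimes:\ a_i\notin P\ (1\le i\le n)}$, this yields
\[
    \quotoel{I}{a_1\cdots a_n}=\bigcap_{P\in S_n}P.
\]

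I would argue by contradiction: suppose the chain $\left(\quotoel{I}{a_1\cdots a_n}\right)_n$ never stabilizes, and let $m_1<m_2<\cdots$ enumerate the (infinitely many) indices $m$ at which $\quotoel{I}{a_1\cdots a_m}\subsetneq\quotoel{I}{a_1\cdots a_{m+1}}$. For each $k$, choose $b_k\in\quotoel{I}{a_1\cdots a_{m_k+1}}\setminus\quotoel{I}{a_1\cdots a_{m_k}}$. The second membership produces some $P_k\in\setofprimes$ with $b_k a_1\cdots a_{m_k}\notin P_k$; by primeness this forces $P_k\in S_{m_k}$, i.e.\ $a_i\notin P_k$ for $1\le i\le m_k$. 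The first membership gives $b_k a_1\cdots a_{m_k+1}\in P_k$, and primeness then forces $a_{m_k+1}\in P_k$.

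The final step is to invoke relative compactness of $\setofprimes$ to extract a pseudo-finite subsequence $(P_{k_\ell})_\ell$ of $(P_k)$ and derive a contradiction. For each $\ell$ we have $a_{m_{k_\ell}+1}\in P_{k_\ell}\subset\bigcup_j P_{k_j}$, so pseudo-finiteness demands that $a_{m_{k_\ell}+1}\in P_{k_j}$ for all but finitely many $j$. However, whenever $j>\ell$ we have $m_{k_j}>m_{k_\ell}$, hence $m_{k_\ell}+1\le m_{k_j}$, and then $P_{k_j}\in S_{m_{k_j}}$ yields $a_{m_{k_\ell}+1}\notin P_{k_j}$. This is the sought contradiction.

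The argument is essentially bookkeeping; there is no real obstacle beyond keeping the subsequence indices aligned. The only genuinely conceptual point is the diagonal mismatch between the strictly growing list of $a_i$'s that each $P_k$ is forced to avoid, and the pseudo-finiteness requirement that every element of the union eventually lie in almost every chosen prime. The degenerate case $\setofprimes=\emptyset$, where $I=A$ by convention, is trivial.
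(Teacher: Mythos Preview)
Your proof is correct and follows essentially the same route as the paper's: both rewrite $\quotoel{I}{a_1\cdots a_n}$ as an intersection over the primes in $\setofprimes$ not containing the product, pick a witnessing prime at each strict-inclusion step, extract a pseudo-finite subsequence, and exhibit an element violating pseudo-finiteness. The only cosmetic differences are that the paper passes straight to a sequence with strict inclusion at every step (so no $m_k$ bookkeeping) and uses the product $a_1\cdots a_{n_2}$ rather than the single factor $a_{m_{k_1}+1}$ as the offending element.
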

\begin{proof}
    Denote by $I$ the intersection
    $\bigcap\set{P:P\in\setofprimes}$. Assume toward a
    contradiction that $I$ is not an abstract continuity ideal.
    Then there exists a sequence $(a_n)$ in $A$ such that
    \[
        \quotoel{I}{a_1a_2\cdots a_n}\subsetneq
        \quotoel{I}{a_1a_2\cdots a_{n+1}}\qquad (n\in\naturals).
    \]
    For each $n$, we see that
    \[
        \quotoel{I}{a_1\cdots a_n}=\bigcap\set{P\in\setofprimes:\
        a_1\cdots a_n\notin P}.
    \]
    Thus, it follows that, there exists $P_n\in\setofprimes$ such that
    $a_1\cdots a_n\notin P_n$ but $a_1\cdots a_{n+1}\in P_n$. The relative compactness implies
    that there exists $n_1<n_2<\cdots$ such that  $(P_{n_i})$ is pseudo-finite.
    However, we see that $a_1\cdots a_{n_2}\in P_{n_1}$, but $a_1\cdots
    a_{n_2}\notin P_{n_i}$ ($i\ge 2$); this contradicts the pseudo-finiteness.
\end{proof}

The remaining of this section is devoted to a converse of the
above proposition. \emph{Let $I$ be an abstract continuity ideal
of a commutative algebra $A$. Denote by $\setofprimes$ the set of
prime ideals of the form $\quotoel{I}{a}$ for some $a\in A$.}  The
following is a modification of \cite[4.3]{pham2005b}.

\begin{lemma}
    \label{dichotomy}
    For each cardinal $\kappa\le\cardinal{\setofprimes}$,
         there exists a sub-family $\G\subset \setofprimes$ with the properties that
         $\cardinal{\G}\ge\kappa$ and that $\cardinal{\set{P\in\G:\ a\notin P}}<\kappa$
         for each $a\in\bigcup\set{P:\ P\in\G}$.
\end{lemma}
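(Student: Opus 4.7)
The plan is to argue by contradiction, leveraging the stabilisation property built into the definition of an abstract continuity ideal. Suppose no subfamily $\G\subset\setofprimes$ with $\cardinal{\G}\ge\kappa$ has the desired property; equivalently, every such $\G$ is \emph{bad}, meaning that there exists some $a\in\bigcup\set{P:P\in\G}$ with $\cardinal{\set{P\in\G:\ a\notin P}}\ge\kappa$. I will exploit badness to construct an infinite sequence $(a_n)$ in $A$ along which the chain $\quotoel{I}{a_1\cdots a_n}$ $(n\in\naturals)$ is \emph{strictly} increasing, directly contradicting the defining stabilisation property of $I$.

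The recursion is straightforward. Put $\setofprimes_0=\setofprimes$, which has cardinality at least $\kappa$ by hypothesis. Inductively, suppose that $a_1,\ldots,a_n$ and a subfamily $\setofprimes_n\subset\setofprimes$ have been chosen satisfying $\cardinal{\setofprimes_n}\ge\kappa$ and $a_1,\ldots,a_n\notin P$ for every $P\in\setofprimes_n$. Badness of $\setofprimes_n$ supplies an element $a_{n+1}\in\bigcup\set{P:P\in\setofprimes_n}$ whose ``exclusion set'' $\setofprimes_{n+1}:=\set{P\in\setofprimes_n:\ a_{n+1}\notin P}$ still has cardinality at least $\kappa$; the inductive invariants are thereby preserved.

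To extract strict inclusion at stage $n$, fix some $P^{*}\in\setofprimes_n$ with $a_{n+1}\in P^{*}$ (such a $P^{*}$ exists by the choice $a_{n+1}\in\bigcup\set{P:P\in\setofprimes_n}$), and write $P^{*}=\quotoel{I}{c}$ for some $c\in A$, which is possible because $P^{*}\in\setofprimes$. Since $a_1,\ldots,a_n\notin P^{*}$ and $P^{*}$ is prime, $a_1\cdots a_n\notin P^{*}$, i.e.\ $c\,a_1\cdots a_n\notin I$, so $c\notin\quotoel{I}{a_1\cdots a_n}$; on the other hand $a_{n+1}\in P^{*}$ gives $c\,a_{n+1}\in I$, whence $c\,a_1\cdots a_{n+1}\in I$ and $c\in\quotoel{I}{a_1\cdots a_{n+1}}$. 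Combined with the automatic inclusion $\quotoel{I}{a_1\cdots a_n}\subset\quotoel{I}{a_1\cdots a_{n+1}}$, this yields the strict increase $\quotoel{I}{a_1\cdots a_n}\subsetneq\quotoel{I}{a_1\cdots a_{n+1}}$ at every $n$, contradicting the abstract continuity ideal property of $I$. The scheme is conceptually clean; the only point requiring any care is to maintain $\cardinal{\setofprimes_n}\ge\kappa$ throughout the recursion, and this is exactly what the negation of the conclusion provides at each step.
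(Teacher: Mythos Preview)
Your argument is correct. Both your proof and the paper's ultimately rest on the same mechanism: if the conclusion fails, one can recursively produce elements $a_1,a_2,\ldots$ so that the chain $\quotoel{I}{a_1\cdots a_n}$ is strictly increasing, contradicting the definition of an abstract continuity ideal. The organization, however, differs. The paper introduces the auxiliary families $\setofprimes_a=\set{\quotoel{I}{ab}:\ b\in A,\ \quotoel{I}{ab}\ \text{prime}}$, first proves (by the same chain contradiction) that there is some $a_0$ with $\cardinal{\setofprimes_{a_0}}\ge\kappa$ for which either $\cardinal{\setofprimes_{a_0a}}<\kappa$ or $\quotoel{I}{a_0a}=\quotoel{I}{a_0}$ for every $a$, and then verifies directly that $\G=\setofprimes_{a_0}$ has the required property. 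Your route is more economical: you negate the lemma outright and run the recursion on the subfamilies $\setofprimes_n$ themselves, extracting the witness $c$ for strict inclusion from the explicit description $P^*=\quotoel{I}{c}$. The paper's detour buys an explicit identification of $\G$ as $\setofprimes_{a_0}$, but since this form is not used downstream, your streamlined contradiction is equally serviceable.
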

\begin{proof}
For each $a\in A\cup\set{\unit_A}$, let $\setofprimes_a$ be
the set of prime ideals of the form $\quotoel{I}{ab}$ for some
$b\in A$. We \emph{claim} that there exists $a_0\in
A\cup\set{\unit_A}$ such that $\abs{\setofprimes_{a_0}}\ge\kappa$
and such that, for each $a\in A$, either
$\abs{\setofprimes_{a_0a}}<\kappa$ or
$\quotoel{I}{a_0a}=\quotoel{I}{a_0}$. Indeed, assume the contrary.
Then, since $\abs{\setofprimes_{\unit_A}}\ge\kappa$, by induction,
there exists a sequence $(a_n)\subset A$ such that
$\abs{\setofprimes_{a_1\cdots a_n}}\ge\kappa$ and such that
\[
    \quotoel{I}{a_1\cdots a_n}\subsetneq
    \quotoel{I}{a_1\cdots a_{n+1}} \qquad (n\in\naturals).
\]
This contradicts the definition of an continuity ideal. Hence the
claim holds.

Put $\G=\setofprimes_{a_0}$; this obviously satisfies
$\cardinal{\G}\ge\kappa$. Suppose that $a\in A$ and that
$\G'=\set{P\in \G:\, a\notin P}$ has cardinality at least
$\kappa$. Then, for each $P\in \G'$, because $a\notin P$ we have
$\quotoel{P}{a}=P$. Thus $\G'\subset\setofprimes_{a_0a}\,$, and
hence $\abs{\setofprimes_{a_0a}}\ge\kappa$. Therefore, by the
claim, we must have $\quotoel{I}{a_0a}=\quotoel{I}{a_0}$. We now
show that $\G'=\G$. Assume towards a contradiction that $\G'\neq
\G$, and let $P\in \G\setminus\G'$, say $P=\quotoel{I}{a_0a_1}$
for some $a_1\in A$. Then, since $a\in P$ we have $a_1\in
\quotoel{I}{a_0a}=\quotoel{I}{a_0}$, so that $a_0a_1\in I$. This
implies that $P=A$, a contradiction. This proves that $\G$ has the
desired property.
\end{proof}

\begin{lemma}
    \label{prime_radical_intersection}
    $\primeradical{I}$ is the intersection of the prime ideals in $\setofprimes$.
\end{lemma}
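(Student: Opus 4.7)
The plan is to prove the two inclusions separately. The inclusion $\primeradical{I} \subseteq \bigcap_{P\in\setofprimes} P$ is immediate: every $P = \quotoel{I}{a}$ in $\setofprimes$ contains $I$, since $z \in I$ implies $za \in I$ and hence $z \in \quotoel{I}{a}$, and $\primeradical{I}$ is by definition the intersection of all prime ideals containing $I$.

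For the reverse inclusion I argue contrapositively: given $b \in A$ with $b \notin \primeradical{I}$ (so $b^n \notin I$ for every $n \in \naturals$), I produce an $a \in A$ such that $\quotoel{I}{a}$ is prime and $ab \notin I$. First, apply the abstract continuity property to the constant sequence $(b,b,\ldots)$ to obtain $n_0 \in \naturals$ with $\quotoel{I}{b^{n_0+k}} = \quotoel{I}{b^{n_0}}$ for all $k \ge 0$. Set $d_0 = b^{n_0} \in A$ and $J_0 = \quotoel{I}{d_0}$. Since $b^{n_0+1} \notin I$, we have $b \notin J_0$, so $J_0$ is a proper ideal; moreover the stabilisation yields the key property $zb \in J_0 \iff z \in J_0$ for every $z \in A$.

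Now iterate: as long as $J_k = \quotoel{I}{d_k}$ is not prime, its properness forces the existence of $x_k, y_k \in A \setminus J_k$ with $x_k y_k \in J_k$; set $d_{k+1} = d_k y_k \in A$ and $J_{k+1} = \quotoel{I}{d_{k+1}} = \quotoel{J_k}{y_k}$. A direct induction on $k$ shows that the property $zb \in J_{k+1} \iff z \in J_{k+1}$ is preserved (in particular $b \notin J_{k+1}$, so $J_{k+1}$ is proper), while the inclusion $J_k \subsetneq J_{k+1}$ is strict because $x_k \in J_{k+1} \setminus J_k$. If the process never terminated, the sequence $a_1 = \cdots = a_{n_0} = b$, $a_{n_0+k+1} = y_k$ ($k \ge 0$) in $A$ would yield $\quotoel{I}{a_1\cdots a_{n_0+k+1}} = J_{k+1}$, a strictly increasing chain contradicting the abstract continuity property. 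Hence some $J_k$ must be prime; it is a member of $\setofprimes$ with $b \notin J_k$, completing the argument.

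The main technical subtlety is propagating the invariant $b \notin J_k$ (equivalently, the properness of $J_k$) through the induction; this is handled cleanly by maintaining the stronger identity $zb \in J_k \iff z \in J_k$, which transfers from $J_k$ to $J_{k+1} = \quotoel{J_k}{y_k}$ by a short calculation and guarantees both properness of $J_{k+1}$ and the separation $b \notin J_{k+1}$ at each stage.
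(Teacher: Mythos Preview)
Your proof is correct and is essentially the standard iterative argument underlying Sinclair's commutative prime kernel theorem, which is precisely what the paper invokes (it cites \cite[Theorem 5.3.15]{dales2000} and says the proof of \cite[Lemma 4.1]{pham2005b} works verbatim). One small presentational point: the assertion ``in particular $b\notin J_{k+1}$'' is not a consequence of the invariant for $J_{k+1}$ itself but rather of the invariant for $J_k$ together with $y_k\notin J_k$ (since $b\in J_{k+1}$ would mean $by_k\in J_k$, hence $y_k\in J_k$); the logic is sound, only the placement of the parenthetical is slightly misleading.
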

\begin{proof}
This is based on the commutative prime kernel theorem due to
Sinclair (see \cite[Theorem 5.3.15]{dales2000} or \cite[Theorem
11.4]{sinclair1976}). The proof of \cite[Lemma 4.1]{pham2005b}
works almost verbatim.
\end{proof}

\begin{lemma}
    \label{no_descending}
    Every element in $\setofprimes$ contains a minimal element.
\end{lemma}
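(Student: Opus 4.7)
The plan is to show that $\setofprimes$, ordered by inclusion, admits no infinite strictly descending chain, and then to pass from this well-foundedness to the existence of a minimal element of $\setofprimes$ inside any given $P \in \setofprimes$.

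To establish the well-foundedness, I would assume for contradiction an infinite strictly descending chain $P_1 \supsetneq P_2 \supsetneq \cdots$ in $\setofprimes$, write $P_n = \quotoel{I}{c_n}$ for some $c_n \in A$, and for each $n$ pick $y_n \in P_n \setminus P_{n+1}$ (possible by strictness). The target is to show that the sequence $(y_n)$ violates the abstract continuity condition for $I$, that is, that
\[
    \quotoel{I}{y_1 \cdots y_n} \subsetneq \quotoel{I}{y_1 \cdots y_{n+1}} \qquad (n\in\naturals).
\]

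The main obstacle is identifying a witness of strictness; the right choice is $c_{n+1}$. On one hand, $y_{n+1} \in P_{n+1} = \quotoel{I}{c_{n+1}}$ gives $y_1 \cdots y_{n+1} c_{n+1} \in I$, so $c_{n+1} \in \quotoel{I}{y_1 \cdots y_{n+1}}$. On the other hand, $c_{n+1} \in \quotoel{I}{y_1 \cdots y_n}$ would give $y_1 \cdots y_n \in P_{n+1}$; primeness of $P_{n+1}$ would then force some $y_i$ ($1 \le i \le n$) into $P_{n+1}$. But $P_{n+1} \subset P_{i+1}$ (the chain is descending) together with the construction $y_i \notin P_{i+1}$ rules this out, yielding the desired contradiction to abstract continuity.

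To conclude, the set $\{Q \in \setofprimes : Q \subset P\}$ is non-empty (it contains $P$); if it had no minimal element, dependent choice would construct an infinite strictly descending sequence inside it, contrary to what has just been shown. Hence a minimal element $Q_0$ exists, and any $R \in \setofprimes$ with $R \subsetneq Q_0$ would itself lie below $P$, contradicting the minimality of $Q_0$. So $Q_0$ is minimal in $\setofprimes$ and contained in $P$, as required.
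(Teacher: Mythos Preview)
Your proof is correct and follows essentially the same route as the paper: assume an infinite strictly descending chain $P_n=\quotoel{I}{c_n}$, pick $y_n\in P_n\setminus P_{n+1}$, and use $c_{n+1}$ (the paper's $f_{n+1}$) as the witness that $\quotoel{I}{y_1\cdots y_n}\subsetneq\quotoel{I}{y_1\cdots y_{n+1}}$, contradicting abstract continuity. Your final paragraph making explicit the passage from ``no infinite strictly descending chain'' to ``a minimal element below any given $P$'' via dependent choice is a detail the paper leaves implicit.
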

\begin{proof}
    Assume toward a contradiction that there exists $(P_n=\quotoel{I}{f_n})\subset\setofprimes$
    such that
    \[
        P_1\supsetneq P_2\supsetneq\cdots\supsetneq P_n\supsetneq\cdots.
    \]
    For each $n$, choose $a_n\in A$ such that $a_n\in P_n\setminus P_{n+1}$. Then
    we see that $a_1\cdots a_nf_{n}\in I$ but $a_1\cdots
    a_{n}f_{n+1}\notin I$. Thus
    \[
        \quotoel{I}{a_1a_2\cdots a_n}\subsetneq
        \quotoel{I}{a_1a_2\cdots a_{n+1}}\qquad (n\in\naturals);
    \]
    a contradiction to $I$ being an abstract continuity ideal.
\end{proof}

\begin{lemma}
    \label{non_redundancy_continuity_ideals}
    Let $P$ be in $\setofprimes$. Then there
    exists $a\notin P$ but $a\in Q$ for all
    $Q\in\setofprimes$ such that $Q\nsubset P$.
\end{lemma}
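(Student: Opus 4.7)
The plan is to proceed by contradiction, using directly the defining property of an abstract continuity ideal. Negating the conclusion amounts to the statement: for every $x \in A \setminus P$ there is some $Q \in \setofprimes$ with $Q \not\subset P$ and $x \notin Q$. (If the family $\set{Q \in \setofprimes : Q \not\subset P}$ happens to be empty the assertion is trivial because $P$ is proper, so assume it is not.) From this failure hypothesis I will manufacture a sequence $(a_n)$ in $A$ for which the chain $\quotoel{I}{a_1 a_2\cdots a_n}$ strictly ascends at every stage, contradicting the hypothesis on $I$.

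The construction is inductive. Start with any $a_1 \notin P$. Suppose $a_1,\ldots,a_n$ have been chosen so that $a_1\cdots a_n \notin P$. Applying the negated hypothesis with $x = a_1\cdots a_n$ furnishes $Q_{n+1} = \quotoel{I}{g_{n+1}} \in \setofprimes$ satisfying $Q_{n+1} \not\subset P$ and $a_1\cdots a_n \notin Q_{n+1}$. Since $Q_{n+1} \not\subset P$, I pick $a_{n+1} \in Q_{n+1}\setminus P$; primeness of $P$ then preserves $a_1\cdots a_{n+1}\notin P$ and allows the induction to continue. The strict-ascent check uses the witness $g_{n+1}$ in two ways: from $a_{n+1}\in Q_{n+1}=\quotoel{I}{g_{n+1}}$ I get $g_{n+1}a_1\cdots a_{n+1}\in I$, so $g_{n+1}\in\quotoel{I}{a_1\cdots a_{n+1}}$; conversely $a_1\cdots a_n\notin Q_{n+1}$ means $g_{n+1}\,a_1\cdots a_n\notin I$, so $g_{n+1}\notin\quotoel{I}{a_1\cdots a_n}$. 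The always-valid inclusion $\quotoel{I}{a_1\cdots a_n}\subset \quotoel{I}{a_1\cdots a_{n+1}}$ is therefore strict for every $n$, contradicting the abstract continuity of $I$.

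The one delicate point I expect is keeping the induction alive, namely ensuring $a_1\cdots a_n\notin P$ persists so that the negated hypothesis can be reapplied at the next stage; this is handled cleanly by drawing each $a_{n+1}$ from $Q_{n+1}\setminus P$ and invoking primeness of $P$. Notably the argument invokes neither Lemma~\ref{prime_radical_intersection} nor Lemma~\ref{no_descending}: it rests solely on the representation $Q = \quotoel{I}{g_Q}$ for members of $\setofprimes$ and on the defining property of an abstract continuity ideal.
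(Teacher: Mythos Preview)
Your proof is correct and is essentially identical to the paper's own argument: both assume the contrary, build a sequence $(a_n)$ with $a_1\cdots a_n\notin P$ by repeatedly invoking the negated hypothesis to find $Q\in\setofprimes$ with $Q\not\subset P$ and $a_1\cdots a_n\notin Q$, take $a_{n+1}\in Q\setminus P$, and then use the representation $Q=\quotoel{I}{g}$ to witness strict ascent of $\quotoel{I}{a_1\cdots a_n}$. The only differences are cosmetic (your indexing writes $Q_{n+1}$ where the paper writes $Q_n$, and you spell out the role of primeness of $P$ in maintaining $a_1\cdots a_{n+1}\notin P$, which the paper leaves implicit).
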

\begin{proof}
Assume the contrary. Pick $a_1\notin P$. Suppose that we have
already picked $a_1,\ldots,a_n\notin P$. By the assumption, we can
find $Q_n\in\setofprimes$ such that $a_1\ldots a_n\notin Q_n$ and
$Q_n\nsubset P$. We can then choose $a_{n+1}\in Q_n\setminus P$.
The induction can be continued. We see that $(a_n)$, $(Q_n)$
constructed satisfy $a_1\ldots a_n\notin Q_n$ but $a_1\ldots
a_{n+1}\in Q_n$ ($n\in\naturals$). Let $Q_n=\quotoel{I}{f_n}$.
Then we see that $f_{n}\in\quotoel{I}{a_1a_2\cdots
a_{n+1}}\setminus \quotoel{I}{a_1a_2\cdots a_{n}}$; a
contradiction to $I$ being an abstract continuity ideal.
\end{proof}

\begin{lemma}
    \label{subcollection}
    Let $\set{P_\alpha:\ \alpha\in S}$ be a subfamily of
    $\setofprimes$. Then $\bigcap_{\alpha\in S} P_\alpha$ is also
    an abstract continuity ideal.
\end{lemma}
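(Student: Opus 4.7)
The plan is to argue by contradiction, transferring a hypothetical failure of the abstract-continuity condition for $J:=\bigcap_{\alpha\in S}P_\alpha$ back to a failure for $I$ itself, using the representations $P_\alpha=\quotoel{I}{c_\alpha}$ that come for free from membership in $\setofprimes$.

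So first I would suppose that $J$ is not an abstract continuity ideal, and pick a sequence $(a_n)$ in $A$ for which $\quotoel{J}{a_1\cdots a_n}\subsetneq\quotoel{J}{a_1\cdots a_{n+1}}$ holds for infinitely many indices $n$. For each such $n$, choose a witness $b_n\in\quotoel{J}{a_1\cdots a_{n+1}}\setminus\quotoel{J}{a_1\cdots a_n}$, so that $a_1\cdots a_{n+1}b_n\in J$ while $a_1\cdots a_nb_n\notin J$. The latter failure of membership in an intersection produces some $\alpha_n\in S$ with $a_1\cdots a_nb_n\notin P_{\alpha_n}$; the former gives $a_1\cdots a_{n+1}b_n\in P_{\alpha_n}$ for free.

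The key step is then to use $P_{\alpha_n}\in\setofprimes$ to write $P_{\alpha_n}=\quotoel{I}{c_n}$ for some $c_n\in A$, and convert the two memberships above into $a_1\cdots a_{n+1}b_nc_n\in I$ and $a_1\cdots a_nb_nc_n\notin I$. This says precisely that $b_nc_n\in\quotoel{I}{a_1\cdots a_{n+1}}\setminus\quotoel{I}{a_1\cdots a_n}$, so the chain $\bigl(\quotoel{I}{a_1\cdots a_n}\bigr)$ fails to stabilise at the same infinitely many indices $n$, contradicting that $I$ is itself an abstract continuity ideal.

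I don't anticipate a real obstacle. The mild point to be careful about is that the witnesses $b_n$ and $c_n$ are allowed to depend on $n$ (which is perfectly compatible with the definition), and that the primeness of the $P_\alpha$ plays no active role in the argument: only their presentation as quotient ideals $\quotoel{I}{c_\alpha}$ is used. In particular no reindexing or grouping of the $a_n$ is needed, because strict inclusion at index $n$ for $J$ yields strict inclusion at the same index $n$ for $I$.
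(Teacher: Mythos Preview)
Your proposal is correct and follows essentially the same route as the paper's proof: assume failure, pick witnesses $b_n$ to the strict inclusions for $J$, choose $\alpha_n$ with $a_1\cdots a_nb_n\notin P_{\alpha_n}$, write $P_{\alpha_n}=\quotoel{I}{c_n}$, and observe that $b_nc_n$ witnesses the same strict inclusion for $I$. The only cosmetic difference is that you phrase the negation as ``strict inclusion at infinitely many $n$'' whereas the paper tacitly reduces to ``strict inclusion at every $n$'' (by grouping consecutive $a_i$'s); either form gives the contradiction.
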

\begin{proof}
    Assume the contrary. Then there exists $(a_n)$ such that
    \[
        \quotoel{\left(\bigcap_{\alpha\in S} P_\alpha\right)}{a_1a_2\cdots a_n}\subsetneq
        \quotoel{\left(\bigcap_{\alpha\in S} P_\alpha\right)}{a_1a_2\cdots a_{n+1}}\qquad (n\in\naturals).
    \]
    For each $n$, choose $b_n\in A$ such that $a_1\cdots
    a_nb_n\notin \bigcap_{\alpha\in S} P_\alpha$ but $a_1\cdots
    a_{n+1}b_n\in \bigcap_{\alpha\in S} P_\alpha$. Then choose
    $\alpha_n\in S$ such that $a_1\cdots a_nb_n\notin
    P_{\alpha_n}$. We have $P_{\alpha_n}=\quotoel{I}{f_{\alpha_n}}$ for
    some $f_{\alpha_n}\in A$. We see that $a_1\cdots
    a_nb_nf_{\alpha_n}\notin I$ but $a_1\cdots
    a_{n+1}b_nf_{\alpha_n}\in I$. Thus
    \[
        \quotoel{I}{a_1a_2\cdots a_n}\subsetneq
        \quotoel{I}{a_1a_2\cdots a_{n+1}}\qquad (n\in\naturals);
    \]
    a contradiction to $I$ being an abstract continuity ideal.
\end{proof}

\begin{lemma}
  \label{equivalent_primes}
 Let $J$ be a semiprime ideal in $A$. Let $a,b \in A$ be
 such that $\quotoel{J}{a}$ and $\quotoel{J}{b}$ are prime ideals.
 Then the following are equivalent:
  \begin{itemize}
    \item[\rm{(a)}] $\quotoel{J}{a}\subset\quotoel{J}{b}$,
    \item[\rm{(b)}] $ab\notin J$,
    \item[\rm{(c)}] $\quotoel{J}{a}=\quotoel{J}{b}$.
  \end{itemize}
\end{lemma}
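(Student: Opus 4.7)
The plan is to establish the cyclic chain of implications (c) $\Rightarrow$ (a) $\Rightarrow$ (b) $\Rightarrow$ (c). The implication (c) $\Rightarrow$ (a) is immediate from reflexivity of inclusion. The two ingredients I will use throughout are the semiprimeness of $J$ (which lets us promote $b^2 \in J$ to $b \in J$) and the primeness of the two quotient ideals $\quotoel{J}{a}$ and $\quotoel{J}{b}$ (which supplies cancellation and, crucially, the properness needed to derive contradictions).

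For (a) $\Rightarrow$ (b) I argue by contradiction. Suppose $ab \in J$. Then $b \in \quotoel{J}{a}$ by the definition of the quotient ideal, so by hypothesis (a) we have $b \in \quotoel{J}{b}$, i.e., $b^2 \in J$. Semiprimeness of $J$ then forces $b \in J$. But in that case $bc \in J$ for every $c \in A$, which gives $\quotoel{J}{b} = A$, contradicting the assumption that $\quotoel{J}{b}$ is a (proper) prime ideal.

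For (b) $\Rightarrow$ (c), by symmetry it is enough to show $\quotoel{J}{a} \subset \quotoel{J}{b}$. Let $x \in \quotoel{J}{a}$, so $ax \in J$. Since $J$ is an ideal, $b(ax) \in J$, which translates to $ax \in \quotoel{J}{b}$. Now $\quotoel{J}{b}$ is prime and $a \notin \quotoel{J}{b}$ (because $ab \notin J$), so primeness yields $x \in \quotoel{J}{b}$, as required.

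I do not anticipate a genuine obstacle; the only points that require care are to invoke the semiprime hypothesis on $J$ at exactly the right moment (passing from $b^2 \in J$ to $b \in J$), and to recognize that the hypothesis $ab \notin J$ is precisely the statement that the element used for cancellation (namely $a$) lies outside the prime ideal $\quotoel{J}{b}$.
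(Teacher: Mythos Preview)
Your proof is correct and follows essentially the same route as the paper's: the paper also proves (a) $\Rightarrow$ (b) $\Rightarrow$ (c), using semiprimeness of $J$ to show $b\notin\quotoel{J}{b}$ (your contradiction argument unpacked), and then using $a\notin\quotoel{J}{b}$ together with primeness of $\quotoel{J}{b}$ to obtain the inclusion. The only cosmetic difference is that the paper observes $fa\in J\subset\quotoel{J}{b}$ directly rather than writing $b(ax)\in J$ first.
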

\begin{proof}

(a)$\Rightarrow$(b): Since $J$ is semiprime and $\quotoel{J}{b}$
is a proper ideal in $A$, we see that $b\notin \quotoel{J}{b}$. So
$b\notin \quotoel{J}{a}$, and therefore $ab\notin J$.

(b)$\Rightarrow$(c): Condition (b) implies that $a\notin
\quotoel{J}{b}$. Let $f\in\quotoel{J}{a}$. Then $fa\in
J\subset\quotoel{J}{b}$, and so, by the primeness of
$\quotoel{J}{b}$, $f\in\quotoel{J}{b}$. Thus
$\quotoel{J}{a}\subset\quotoel{J}{b}$. Similarly, we have
$\quotoel{J}{b}\subset\quotoel{J}{a}$.
\end{proof}

\begin{remark}
In the case where $I$ is semiprime, the above lemma shows that,
for each $P=\quotoel{I}{a}\in\setofprimes$, $P$ is minimal in
$\setofprimes$ and $a\notin P$ but $a\in Q$ whenever
$Q\in\setofprimes\setminus\set{P}$.
\end{remark}

We can now state the main result of this section.

\begin{theorem}
    Let $I$ be an abstract continuity ideal of a commutative algebra $A$.
    Denote by $\setofprimes_0$ the set of minimal ideals among the prime ideals of the form
    $\quotoel{I}{a}$ for some $a\in A$.
    Then:
    \begin{enumerate}
         \item $\primeradical{I}=\bigcap\set{P: P\in\setofprimes_0}$;
         \item $\setofprimes_0$ is a relatively compact family of prime ideals.
    \end{enumerate}
\end{theorem}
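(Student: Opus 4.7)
Part (i) is immediate. Lemma \ref{prime_radical_intersection} gives $\primeradical{I}=\bigcap\setofprimes$, and $\setofprimes_0\subseteq\setofprimes$ yields $\primeradical{I}\subseteq\bigcap\setofprimes_0$. Conversely, for $x\in\bigcap\setofprimes_0$ and any $P\in\setofprimes$, Lemma \ref{no_descending} provides $P_0\in\setofprimes_0$ with $P_0\subseteq P$, whence $x\in P_0\subseteq P$.

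For part (ii), my plan is to reduce to the semiprime case and then, from a hypothetical sequence with no pseudo-finite subsequence, extract a strictly ascending chain $(\quotoel{J}{a_1\cdots a_k})_k$ violating the abstract continuity ideal property. Set $J=\primeradical{I}$. Applied to $\setofprimes$ regarded as a subfamily of itself, Lemma \ref{subcollection} makes $J$ an abstract continuity ideal; let $\mathfrak{T}$ denote the set of prime ideals of the form $\quotoel{J}{b}$, $b\in A$. I claim $\setofprimes_0\subseteq\mathfrak{T}$. Given $P=\quotoel{I}{a}\in\setofprimes_0$, minimality of $P$ reduces ``$Q\nsubset P$'' to ``$Q\neq P$'' within $\setofprimes$, so Lemma \ref{non_redundancy_continuity_ideals} supplies $b\notin P$ with $b\in Q$ for every $Q\in\setofprimes\setminus\{P\}$; then a direct check using $J=\bigcap\setofprimes$ and the primeness of $P$ shows $\quotoel{J}{b}=P$. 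Since subfamilies of relatively compact families inherit relative compactness, it suffices to show $\mathfrak{T}$ is relatively compact.

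Suppose some sequence $(P_n)\subseteq\mathfrak{T}$ has no pseudo-finite subsequence; after extraction assume the $P_n$ are distinct. Write $P_n=\quotoel{J}{g_n}$; the remark after Lemma \ref{equivalent_primes} then gives $g_n\notin P_n$ and $g_n\in P_m$ for every $m\neq n$. I build inductively nested infinite sets $\naturals=T_0\supset T_1\supset\cdots$, elements $a_k\in A$, and indices $n_k\in T_{k-1}\setminus T_k$ as follows: at stage $k$, the subsequence $(P_n)_{n\in T_{k-1}}$ is itself not pseudo-finite, so some $a_k$ makes $T_k:=\{n\in T_{k-1}:a_k\notin P_n\}$ infinite and $\{n\in T_{k-1}:a_k\in P_n\}$ nonempty---pick $n_k$ from the latter. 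Then $a_k\in P_{n_k}$, and by nesting $a_j\notin P_{n_k}$ for all $j<k$. Finally $g_{n_k}\in\quotoel{J}{a_1\cdots a_k}$ (since $a_kg_{n_k}\in J$), yet $g_{n_k}\notin\quotoel{J}{a_1\cdots a_{k-1}}$: because $J\subseteq P_{n_k}$ and none of $g_{n_k},a_1,\ldots,a_{k-1}$ lies in the prime $P_{n_k}$, their product lies outside $P_{n_k}$ and hence outside $J$. The resulting strictly ascending chain $(\quotoel{J}{a_1\cdots a_k})_k$ contradicts the abstract continuity ideal property of $J$. The chief obstacle is the semiprime reduction, which requires re-presenting each minimal prime $\quotoel{I}{a}$ as $\quotoel{J}{b}$ via the separator $b$ extracted from Lemma \ref{non_redundancy_continuity_ideals}; once in the semiprime world the clean relation $g_n\notin P_n$ drives the inductive construction through.
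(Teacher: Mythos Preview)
Your proof is correct, but it takes a genuinely different route from the paper's. For part (ii) the paper proceeds as follows: given a sequence $(P_n)\subset\setofprimes_0$, it sets $J=\bigcap_{n} P_n$ (not $\primeradical{I}$), uses Lemma \ref{non_redundancy_continuity_ideals} and Lemma \ref{equivalent_primes} to show that the primes of the form $\quotoel{J}{a}$ are \emph{exactly} the $P_n$, and then invokes Lemma \ref{subcollection} together with the dichotomy Lemma \ref{dichotomy} (with $\kappa=\aleph_0$) to extract a pseudo-finite subsequence. By contrast, you pass globally to $J=\primeradical{I}$, embed $\setofprimes_0$ into the family $\mathfrak{T}$ of primes $\quotoel{J}{b}$, and then---rather than appealing to Lemma \ref{dichotomy}---build the strictly ascending chain $(\quotoel{J}{a_1\cdots a_k})_k$ by hand from a hypothetical sequence with no pseudo-finite subsequence. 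In effect your diagonal extraction of the $a_k$ and $n_k$ is a direct, sequence-level substitute for the $\kappa=\aleph_0$ case of the dichotomy lemma. The paper's route is cleaner once Lemma \ref{dichotomy} is available and yields the sharper identification of the primes of $\bigcap_n P_n$; your route is more self-contained and shows that the extra generality of Lemma \ref{dichotomy} is not needed for this theorem. One small point worth making explicit in your write-up: the inclusion $a_k g_{n_k}\in J$ uses not merely $g_{n_k}\in P_m$ for $m\neq n_k$ but the stronger fact (from the remark after Lemma \ref{equivalent_primes}) that $g_{n_k}\in Q$ for every $Q\in\mathfrak{T}\setminus\{P_{n_k}\}$, together with $J=\bigcap\mathfrak{T}$, which holds because $J$ is semiprime.
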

\begin{proof}
The first assertion follows from Lemmas
\ref{prime_radical_intersection} and \ref{no_descending}.

For the second one, let $(P_n)\subset \setofprimes_0$. Set
$J=\bigcap_{n=1}^\infty P_n$. By Lemma
\ref{non_redundancy_continuity_ideals}, there exists
$a_n\in\bigcap_{i\neq n} P_i\setminus P_n$, and so
$P_n=\quotoel{J}{a_n}$. Let $a\in A$ be such that $\quotoel{J}{a}$
is a prime ideal. We \emph{claim} that
$\quotoel{J}{a}\in\set{P_n}$. Indeed, we see that $a\notin J$, and
thus $a\notin P_{n_0}$ for some $n_0$. So, $aa_{n_0}\notin J$. By
Lemma \ref{equivalent_primes}, we deduce that
$\quotoel{J}{a}=P_{n_0}$.

It then follows from Lemmas \ref{subcollection} and
\ref{dichotomy} (applied to $J$) that $(P_n)$ must have a
pseudo-finite subsequence.
\end{proof}

\begin{corollary}
    Let $\theta:A\to B$ be a homomorphism from a commutative
    Banach algebra $A$ into a Banach algebra $B$. Then $\primeradical{\cont(\theta)}$
    is the intersection of a relatively compact family of prime ideals
    of the form $\quotoel{\cont(\theta)}{a}$ for $a\in A$. \enproof
\end{corollary}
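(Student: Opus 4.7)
The plan is to observe that this corollary is an essentially immediate application of the preceding Theorem to the ideal $I = \cont(\theta)$. Indeed, the opening discussion of Section~\ref{abstract_continuity_ideals} already establishes, via the stability lemma \cite[5.2.7]{dales2000}, that for any sequence $(a_n)$ in $A$ the ascending chain
\[
    \quotoel{\cont(\theta)}{a_1\cdots a_n} \subset \quotoel{\cont(\theta)}{a_1\cdots a_{n+1}} \qquad (n\in\naturals)
\]
must stabilize. This is precisely the definition of an abstract continuity ideal, so $\cont(\theta)$ qualifies as an input to the Theorem.

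Accordingly, I would apply the Theorem with $I := \cont(\theta)$. Let $\setofprimes_0$ be the collection of minimal elements among the prime ideals of the form $\quotoel{\cont(\theta)}{a}$ with $a\in A$. By part~(ii) of the Theorem, $\setofprimes_0$ is a relatively compact family of prime ideals, each of the form $\quotoel{\cont(\theta)}{a}$. By part~(i),
\[
    \primeradical{\cont(\theta)} = \bigcap\set{P : P\in\setofprimes_0},
\]
which is exactly the conclusion claimed.

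Since all the real work (dichotomy lemma, absence of infinite descending chains in $\setofprimes$, non-redundancy, stability under intersections of subfamilies) has already been carried out in the abstract setting, no additional analytic input from the homomorphism $\theta$ is needed beyond the single observation that $\cont(\theta)$ satisfies the stabilization property. There is therefore no real obstacle; the only step to spell out is the translation of the stability lemma into the abstract continuity ideal condition, which is already done above. The corollary thus follows in one line after invoking the Theorem.
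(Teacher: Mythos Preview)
Your proposal is correct and matches the paper's approach exactly: the corollary is stated with a QED symbol and no separate proof, since it follows immediately by applying the preceding Theorem to $I=\cont(\theta)$, which was already shown to be an abstract continuity ideal at the start of Section~\ref{abstract_continuity_ideals}.
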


\begin{lemma}
    Let $I$ be an abstract continuity ideal of $\C_0(\Omega)$ for a locally compact space $\Omega$.
    Then $I$ is either a semiprime ideal or the whole of $\C_0(\Omega)$.
\end{lemma}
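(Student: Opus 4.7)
I would argue by contrapositive: if $I$ is not semiprime, then it fails to be an abstract continuity ideal. Suppose then $f \in \C_0(\Omega)$ satisfies $f^n \in I$ for some $n$ but $f \notin I$.

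The first step is to reduce to a non-negative real function. Note that $|f|^{2n} = f^n \conj{f}^n \in I$ (since $\conj{f}^n \in \C_0(\Omega)$ and $f^n \in I$), and that the function $u := f/\sqrt{|f|}$, extended by $0$ on $\{f = 0\}$, belongs to $\C_0(\Omega)$ (its modulus equals $\sqrt{|f|}$) and factors $f = u \cdot \sqrt{|f|}$. So it suffices to prove the reduced claim: \emph{if $g \in \C_0(\Omega)$ is non-negative and $g^m \in I$ for some $m \in \naturals$, then $g \in I$}. Applying this reduced claim to $|f|$ (with $m = 2n$) gives $|f| \in I$; applying it to $\sqrt{|f|}$ (with $m = 2$) gives $\sqrt{|f|} \in I$; and then $f = u \sqrt{|f|} \in I$ by ideality.

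For the reduced claim, assume for contradiction that $g \geq 0$, $g^m \in I$, but $g \notin I$. Consider $R := \{r > 0 : g^r \in I\}$. Since $g^{r+s} = g^r g^s$ with $g^s \in \C_0(\Omega)$ for all $s > 0$, the set $R$ is upward-closed in $(0, \infty)$ and contains $m$; let $r_* := \inf R$. If $r_* < 1$, any element $r \in R \cap (r_*, 1)$ (which is non-empty) gives $g = g^r \cdot g^{1-r} \in I$, contradicting $g \notin I$. Hence $r_* \geq 1$. Now apply the abstract continuity of $I$ to the sequence $a_k := g^{r_*/2^k} \in \C_0(\Omega)$, whose partial products satisfy $a_1 \cdots a_n = g^{r_*(1 - 1/2^n)}$; then $\quotoel{I}{a_1 \cdots a_n}$ must stabilize for large $n$, and I aim to contradict this by showing the chain is strictly increasing for every $n$.

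The main obstacle is constructing, for each $n$, a witness $b_n \in \C_0(\Omega)$ with $b_n \cdot a_1 \cdots a_{n+1} \in I$ and $b_n \cdot a_1 \cdots a_n \notin I$; the construction splits by whether the infimum $r_*$ is attained. If $r_* \in R$, take $b_n := g^{r_*/2^{n+1}}$: then $b_n \cdot a_1 \cdots a_{n+1} = g^{r_*} \in I$, while $b_n \cdot a_1 \cdots a_n = g^{r_* - r_*/2^{n+1}}$ has exponent strictly below $r_* = \inf R$ and hence lies outside $I$. If instead $r_* \notin R$ (so $R = (r_*, \infty)$), take $b_n := g^{3r_*/2^{n+2}}$: then $b_n \cdot a_1 \cdots a_{n+1} = g^{r_* + r_*/2^{n+2}} \in I$ (exponent above $r_*$), while $b_n \cdot a_1 \cdots a_n = g^{r_* - r_*/2^{n+2}}$ has exponent below $r_*$ and is not in $I$. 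In either sub-case the strict inclusion $\quotoel{I}{a_1 \cdots a_n} \subsetneq \quotoel{I}{a_1 \cdots a_{n+1}}$ holds for all $n$, contradicting abstract continuity. The essential algebraic feature enabling this argument is that $\C_0(\Omega)$ admits continuous fractional powers $g^c$ of non-negative $g$ for every $c > 0$, permitting precise control over exponents near $r_*$.
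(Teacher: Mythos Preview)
Your proof is correct and is essentially the standard argument the paper is pointing to: the key mechanism---using fractional powers $g^c$ of a non-negative function to manufacture a strictly increasing chain of quotient ideals $\quotoel{I}{a_1\cdots a_n}$---is exactly the idea behind the classical Esterle--Sinclair proof that the continuity ideal of a homomorphism from $\C_0(\Omega)$ is semiprime (cf.\ \cite[Theorem 5.4.31]{dales2000}). Your organization via $r_*=\inf\{r>0:g^r\in I\}$ and the two sub-cases $r_*\in R$ versus $r_*\notin R$ is a clean way to handle the construction of the witnesses $b_n$; the exponent computations check out in both cases.
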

\begin{proof}
    The proof is the same as the proof that the continuity ideal of a discontinuous homomorphism
    from $\C_0(\Omega)$ into a Banach algebra is semiprime (\cite{esterle1978},\cite{sinclair1975},
    cf. \cite[Theorem 5.4.31]{dales2000}).
\end{proof}

\begin{corollary}
    \label{main_theorem_abstract}
    Let $\Omega$ be a locally compact space.
    \begin{enumerate}
        \item Let $I$ be an abstract continuity ideal in $\C_0(\Omega)$.
            Denote by $\setofprimes$ the set of prime ideals
            of the form $\quotoel{I}{f}$ for some
            $f\in\C_0(\Omega)$.
            Then:
            \begin{enumerate}
                \item $I=\bigcap\set{P: P\in\setofprimes}$;
                \item $\setofprimes$ is a relatively compact family of prime ideals.
            \end{enumerate}
        \item Conversely, let $\setofprimes$ be a relatively compact family of
            prime ideals in $\C_0(\Omega)$. Then $\bigcap\set{P:P\in\setofprimes}$ is
            an abstract continuity ideal in
            $\C_0(\Omega)$. \enproof
    \end{enumerate}
\end{corollary}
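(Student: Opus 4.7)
The plan is to derive this corollary directly from the main theorem of the section together with the preceding lemma on semiprimeness and the Remark following Lemma~\ref{equivalent_primes}. Part (ii) is immediate: it is just the statement of Proposition~\ref{abstract_first_part} specialized to the commutative algebra $\C_0(\Omega)$, so nothing further is needed.

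For part (i), I would split into two cases according to the preceding lemma. If $I = \C_0(\Omega)$, then $\quotoel{I}{f} = \C_0(\Omega)$ for every $f$, so no quotient $\quotoel{I}{f}$ is a (proper) prime ideal, hence $\setofprimes = \emptyset$. Following the convention established in Section~\ref{definition}, the intersection of the empty family of prime ideals is $\C_0(\Omega)$, and the empty family is trivially relatively compact; so (a) and (b) hold vacuously.

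In the remaining case, the lemma just before the corollary forces $I$ to be a semiprime ideal. The point is now that, for such $I$, the set $\setofprimes$ appearing in the corollary coincides with the set $\setofprimes_0$ of \emph{minimal} such prime ideals used in the main theorem: indeed, by the Remark after Lemma~\ref{equivalent_primes}, whenever $I$ is semiprime, every prime ideal of the form $\quotoel{I}{f}$ is already minimal within $\setofprimes$ (because there is an $f\notin P$ that lies in all other $Q\in\setofprimes$, so no such $Q$ can be strictly contained in $P$). Hence $\setofprimes_0 = \setofprimes$, and the main theorem of this section yields both that $\setofprimes$ is relatively compact (proving (b)) and that $\primeradical{I} = \bigcap\set{P:P\in\setofprimes}$. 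Since $I$ is semiprime, $\primeradical{I} = I$, giving (a).

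The main conceptual step, and the only nonroutine one, is the reduction $\setofprimes = \setofprimes_0$; once one notices that semiprimeness of $I$ combined with Lemma~\ref{equivalent_primes} forces every $\quotoel{I}{f}$ that is prime to be minimal, the corollary follows with no further work. I do not anticipate any real obstacle beyond keeping track of the trivial case $I = \C_0(\Omega)$, which must be handled separately because the main theorem is phrased in terms of $\setofprimes_0$ and the equality $\primeradical{I}=I$ requires $I$ to be semiprime.
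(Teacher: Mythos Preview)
Your proposal is correct and follows essentially the same route the paper intends: the corollary is marked with \enproof\ precisely because it is the immediate combination of the preceding lemma (an abstract continuity ideal in $\C_0(\Omega)$ is semiprime or the whole algebra), the Remark after Lemma~\ref{equivalent_primes} (which yields $\setofprimes=\setofprimes_0$ when $I$ is semiprime), the main theorem of the section, and Proposition~\ref{abstract_first_part} for part~(ii). Your handling of the trivial case $I=\C_0(\Omega)$ via the empty-intersection convention is exactly what is needed.
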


\begin{corollary}
    Let $\Omega$ be a locally compact space. Then each homomorphism from
    $\C_0(\Omega)$ into a Banach algebra is continuous on the intersection
    of a relatively compact family of prime ideals of the form
    $\quotoel{\cont(\theta)}{f}$ for $f\in \C_0(\Omega)$. \enproof
\end{corollary}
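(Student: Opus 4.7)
The final corollary is essentially a direct synthesis of two facts already established in the paper, so my plan is to combine them cleanly rather than do new work.

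First, I recall from the opening of Section \ref{abstract_continuity_ideals} that for any homomorphism $\theta:A\to B$ from a commutative Banach algebra into a Banach algebra, the continuity ideal $\cont(\theta)$ is an abstract continuity ideal in the sense of the definition given there (this was the consequence of the stability lemma). In particular, this applies when $A=\C_0(\Omega)$.

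Next, I apply Corollary \ref{main_theorem_abstract}(i) to the abstract continuity ideal $I=\cont(\theta)$ in $\C_0(\Omega)$. This yields that the family
\[
    \setofprimes = \set{\quotoel{\cont(\theta)}{f}: f\in\C_0(\Omega),\ \quotoel{\cont(\theta)}{f}\ \textrm{is prime}}
\]
is a relatively compact family of prime ideals in $\C_0(\Omega)$, and that
\[
    \cont(\theta) = \bigcap\set{P: P\in\setofprimes}.
\]

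Finally, I invoke the basic fact stated in the introduction that, when $A=\C_0(\Omega)$, the homomorphism $\theta$ is continuous on its continuity ideal $\cont(\theta)$. Combining this with the identification in the previous step, $\theta$ is continuous on $\bigcap\set{P:P\in\setofprimes}$, which is precisely the assertion of the corollary. There is no real obstacle here: all the machinery — the semiprimeness of $\cont(\theta)$ for $\C_0(\Omega)$, the extraction of the relatively compact family from an abstract continuity ideal, and the continuity of $\theta$ on $\cont(\theta)$ — has already been assembled; the corollary is simply the conjunction.
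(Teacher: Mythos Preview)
Your proposal is correct and follows precisely the route the paper intends: the corollary is marked with \enproof\ because it is an immediate combination of Corollary~\ref{main_theorem_abstract}(i) (applied to $I=\cont(\theta)$, which is an abstract continuity ideal by the stability-lemma discussion) with the fact from the introduction that $\theta$ is continuous on $\cont(\theta)$ when $A=\C_0(\Omega)$. There is nothing to add.
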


\section{More properties of relatively compact families of prime
ideals}

\label{moreproperties}

 In this section, let $\Omega$ be a locally compact space, and let $\setofprimes$
 be a non-empty relatively compact family of prime ideals in $\C_0(\Omega)$.  Denote by
 $\setofunions$ the collection of all the ideals that are unions of \emph{countably} many ideals in
$\setofprimes$. We call $\setofunions$ the \emph{closure} of
$\setofprimes$; we shall show that it is indeed the smallest
compact family of prime ideals containing $\setofprimes$.

\emph{Note that} an ideal in $\setofunions$ is automatically prime
in $\C_0(\Omega)$, and that the union of each pseudo-finite
sequence of prime ideals in $\C_0(\Omega)$ is again a prime ideal
in $\C_0(\Omega)$ (see the next lemma).

\begin{lemma}
    The union of finitely many prime ideals in $\C_0(\Omega)$ is either one of the given prime ideal
    or not an ideal. The union of countably many prime ideals in $\C_0(\Omega)$ is
    not equal $\C_0(\Omega)$.
\end{lemma}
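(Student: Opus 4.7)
My plan is to handle the two assertions separately. The first is an instance of the classical prime-avoidance lemma: if $I := \bigcup_{i=1}^n P_i$ happens to be an ideal, then being an ideal contained in a finite union of prime ideals, $I \subseteq P_j$ for some $j$, and combined with $P_j \subseteq I$ we conclude $I = P_j$.

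The second assertion rests on the fact that every prime ideal in $\C_0(\Omega)$ is a \emph{$z$-ideal}; that is, $f \in P$ and $\zero(g) \supseteq \zero(f)$ imply $g \in P$. This is a classical fact in $\C(X)$ for compact Hausdorff $X$ (Gillman and Jerison), and it transfers to $\C_0(\Omega)$ via the algebra isomorphism $\unitization{\C_0(\Omega)} \cong \C(\onepointcompactification{\Omega})$. A prime $P$ of $\C_0(\Omega)$ remains prime in $\unitization{\C_0(\Omega)}$: if $ab \in P$ for $a, b \in \unitization{\C_0(\Omega)}$, the scalar part of $ab$ must vanish since $P \subseteq \C_0(\Omega)$, so at least one of the scalar parts is zero, and one reduces to primeness of $P$ in $\C_0(\Omega)$. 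Hence $P$ is a $z$-ideal of $\C(\onepointcompactification{\Omega})$, and since the zero sets of a function $f \in \C_0(\Omega)$ in $\Omega$ and in $\onepointcompactification{\Omega}$ differ only by the point at infinity, the $z$-ideal property restricts back to $\C_0(\Omega)$.

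Granting this, my proof of the second assertion is short. Suppose for contradiction that $\C_0(\Omega) = \bigcup_n P_n$. For each $n$, pick $f_n \in \C_0(\Omega) \setminus P_n$ with $\abs{f_n}_\Omega \leq 1$ and set
\[
    f := \sum_{n=1}^\infty 2^{-n} \abs{f_n} \in \C_0(\Omega),
\]
where the series converges uniformly. Since every summand is non-negative, $\zero(f) = \bigcap_n \zero(f_n)$, so $\zero(f_n) \supseteq \zero(f)$ for every $n$. If $f$ were in some $P_n$, then the $z$-ideal property applied to $f$ and $f_n$ would force $f_n \in P_n$, contradicting the choice of $f_n$. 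Therefore $f \notin \bigcup_n P_n$.

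The main obstacle I anticipate is the verification that primes in $\C_0(\Omega)$ are $z$-ideals: one must carefully extend a prime of $\C_0(\Omega)$ to a prime of the unitization $\unitization{\C_0(\Omega)}$, invoke the classical Gillman--Jerison result there, and check that restricting back to $\C_0(\Omega)$ preserves the $z$-ideal property. Once this is in place, the remainder of the argument is a routine assembly.
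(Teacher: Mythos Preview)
Your argument for the first clause via prime avoidance is fine (the paper merely says the first clause is ``similar'' to the second and gives no details).

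For the second clause, your key claim is wrong: prime ideals in $\C_0(\Omega)$ need \emph{not} be $z$-ideals. A concrete counterexample in $\C([0,1])$: let $f(x)=e^{-1/x}$ (with $f(0)=0$) and $g(x)=x$, so $\zero(f)=\zero(g)=\{0\}$. One checks that $g^n\notin (f)+J_0$ for every $n$ (since $x^n e^{1/x}\to\infty$ as $x\to 0^+$), so the multiplicative set $\{g^n:n\ge 0\}$ misses the ideal $(f)+J_0$, and hence there is a prime $P\supseteq (f)$ with $g\notin P$. This $P$ contains $f$, omits $g$, and $\zero(g)\supseteq\zero(f)$; so $P$ is not a $z$-ideal. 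Gillman--Jerison proves that primes in $\C(X)$ are \emph{absolutely convex} (if $|g|\le|f|$ and $f\in P$ then $g\in P$), which is strictly weaker than being a $z$-ideal; you have conflated the two.

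The repair is immediate and brings you exactly to the paper's argument. Choose each $f_n\ge 0$ with $f_n\notin P_n$ (possible: otherwise $P_n$ would contain every $u_+,u_-,v_+,v_-$ and hence all of $\C_0(\Omega)$) and $f_n\le 2^{-n}$; set $f=\sum f_n$. Then $0\le f_n\le f$. If $f\in P_n$, then $f_n^2=f\cdot h$ with $h=f_n^2/f$ extending continuously by $0$ (since $0\le h\le f_n$), so $f_n^2\in P_n$ and hence $f_n\in P_n$; contradiction. This is precisely the paper's one-line proof ``$f\notin P_n$ since $f\ge f_n$'', which silently invokes absolute convexity rather than the $z$-ideal property.
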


\begin{proof}
We prove the second clause only; the proof of the first one is
similar. Let $P_n$ ($n\in\naturals$) be prime ideals in
$\C_0(\Omega)$. Choose $f_n\in \C_0(\Omega)\setminus P_n$. We can
assume that $0\le f_n\le 2^{-n}$. Set $f=\sum_{n=1}^\infty f_n$.
Then $f\in\C_0(\Omega)$ but $f\notin P_n$ ($n\in\naturals$) since
$f\ge f_n$.
\end{proof}

\begin{lemma}
    Each chain in $\setofunions$ is well-ordered with respect to
    the inclusion; that is, each non-empty chain in $\setofunions$ has a smallest element.
\end{lemma}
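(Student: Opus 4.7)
The plan is to prove well-ordering by showing no strictly descending sequence exists in $\setofunions$. Since each chain is already totally ordered by definition, absence of an infinite strictly descending sequence is equivalent to every non-empty subchain having a smallest element.

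Suppose toward a contradiction that $Q_1\supsetneq Q_2\supsetneq\cdots$ is a strictly descending sequence in $\setofunions$. Write each $Q_k=\bigcup_{n\in\naturals} P_{k,n}$ as a countable union of ideals from $\setofprimes$, and for each $k$ pick a witness $a_k\in Q_k\setminus Q_{k+1}$. Since $a_k\in Q_k$, there is some index $n_k$ with $a_k\in P_{k,n_k}$. The goal is then to feed the sequence $(P_{k,n_k})_{k\in\naturals}\subset\setofprimes$ into the relative compactness hypothesis and derive a contradiction.

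By relative compactness of $\setofprimes$, some subsequence $(P_{k_i,n_{k_i}})_{i\in\naturals}$ is pseudo-finite. I would focus on the first witness $a_{k_1}$. On the one hand, $a_{k_1}\in P_{k_1,n_{k_1}}$, so $a_{k_1}\in\bigcup_{i}P_{k_i,n_{k_i}}$, so by pseudo-finiteness $a_{k_1}$ must lie in $P_{k_i,n_{k_i}}$ for all but finitely many $i$. On the other hand, for every $i\ge 2$ we have $k_i\ge k_1+1$, so the decreasing chain of $Q$'s yields
\[
    P_{k_i,n_{k_i}}\subseteq Q_{k_i}\subseteq Q_{k_1+1},
\]
and since $a_{k_1}\notin Q_{k_1+1}$ by construction, $a_{k_1}\notin P_{k_i,n_{k_i}}$ for every $i\ge 2$. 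This is the desired contradiction.

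The main step to get right is the choice of witnesses: picking one prime $P_{k,n_k}$ per level produces a single sequence in $\setofprimes$ that records the descent, and the fact that $a_{k_1}\notin Q_{k_1+1}$ automatically excludes it from every $P_{k_j,n_{k_j}}$ with $j\ge 2$, which is the exact opposite of what pseudo-finiteness requires. No appeal to the support-point structure of prime ideals of $\C_0(\Omega)$ is needed; the argument uses only the hypothesis that $\setofprimes$ is relatively compact and that $\setofunions$ is defined via countable unions.
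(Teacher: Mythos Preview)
Your argument is correct, and it is in fact cleaner than the paper's. Both proofs begin the same way, by assuming a strictly descending sequence $Q_1\supsetneq Q_2\supsetneq\cdots$ in $\setofunions$ and extracting one prime $P_k\in\setofprimes$ with $P_k\subset Q_k$ at each level, then passing to a pseudo-finite subsequence. The difference lies in how the primes $P_k$ are selected and how the contradiction is extracted. The paper chooses $P_n\subset Q_n$ with $P_n\nsubset Q_{n+1}$, forms the union $Q=\bigcup P_n$, and then invokes the special fact in $\C_0(\Omega)$ that the primes above a given prime form a chain to deduce $Q_n\subset Q$; only after this comparability step can it find an element of $Q$ missing from all but finitely many $P_n$. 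You instead fix the witnesses $a_k\in Q_k\setminus Q_{k+1}$ first and pick $P_{k,n_k}\ni a_k$; then $a_{k_1}$ lies in the union of the subsequence but, via the inclusions $P_{k_i,n_{k_i}}\subset Q_{k_i}\subset Q_{k_1+1}$ for $i\ge 2$, it is excluded from all later terms, contradicting pseudo-finiteness directly. Your route bypasses the $\C_0(\Omega)$-specific comparability of primes entirely, so it proves the lemma for $\setofunions$ built over any relatively compact family in an arbitrary commutative algebra; the paper's argument, as written, genuinely uses that ambient chain property.
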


\begin{proof}
Assume the contrary, then we can find an infinite chain
$\cdots\subsetneq Q_n\subsetneq \cdots \subsetneq Q_1$ in
$\setofunions$. For each $n$, choose $P_n\in\setofprimes$ such
that $P_n\subset Q_n$ but $P_n\nsubset Q_{n+1}$. By the relative
compactness of $\setofprimes$ and without loss of generality, we
can assume that $(P_n:n\in\naturals)$ is a pseudo-finite sequence.
Set $Q=\bigcup_{n=1}^\infty P_n$. Then $Q\in\setofunions$, and for
each $n\in\naturals$, either $Q_n\subset Q$ or $Q\subset Q_n$.
Since $P_{n-1}\nsubset Q_{n}$, we must have $Q_n\subset Q$ ($n\ge
2$). Choose $a\in Q_2\setminus Q_3$. Then $a\notin Q_n$, and so
$a\notin P_n$ ($n\ge 3$). However, $a\in Q=\bigcup_{n=1}^\infty
P_n$. This contradicts the pseudo-finiteness of $(P_n)$.
\end{proof}

\begin{lemma}
    $\setofunions$ is compact.
\end{lemma}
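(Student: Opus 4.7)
My plan is to verify the two components of compactness separately: closure under pseudo-finite unions, and relative compactness. The former is immediate: for pseudo-finite $(Q_n) \subset \setofunions$ with $Q_n = \bigcup_m P_{n,m}$ and $P_{n,m} \in \setofprimes$, the union $\bigcup_n Q_n = \bigcup_{n,m} P_{n,m}$ is a countable union of $\setofprimes$-members, which by the observation preceding the first lemma of this section is itself a prime ideal in $\C_0(\Omega)$, hence an element of $\setofunions$.

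For relative compactness, given $(Q_n) \subset \setofunions$, I would apply Ramsey's theorem to the comparability relation on pairs and pass to an infinite sub-sequence that is either a chain or an antichain under inclusion. In the chain case, the preceding lemma implies the chain is well-ordered, so there is no infinite strictly descending sub-sub-sequence, and a standard extraction yields an infinite non-descending sub-sub-sequence $(Q_{n_k})$; this is pseudo-finite since any $a \in \bigcup_k Q_{n_k}$ lies in some $Q_{n_{k_0}}$ and hence in $Q_{n_k}$ for all $k \geq k_0$.

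The antichain case is the main obstacle. Primes in $\C_0(\Omega)$ supported at a common point form a chain, so an antichain in $\setofunions$ has pairwise distinct support points. Writing $Q_n = \bigcup_m P_{n,m}$ as an increasing chain in $\setofprimes$ (all $P_{n,m}$ having the same support as $Q_n$), I would diagonalize using relative compactness of $\setofprimes$: construct nested infinite $N_1 \supset N_2 \supset \cdots$ with each column $(P_{n,i})_{n \in N_i}$ pseudo-finite, and let $n_k$ denote the $k$-th element of $N_k$, so that for each $i$ the column $(P_{n_k,i})_{k \geq i}$ is pseudo-finite as a subsequence of a pseudo-finite sequence. The principal difficulty is then to show that $(Q_{n_k})$ itself is pseudo-finite: for $a \in \bigcup_k Q_{n_k}$ with minimal witness $a \in P_{n_{k_0}, j_0}$ the case $k_0 \geq j_0$ follows at once from pseudo-finiteness at level $j_0$, but in the complementary case $k_0 < j_0$ I would need to exploit that the support points $p_{n_k}$ must escape every compact subset of $\Omega$ (since the pseudo-finite subsequence of $(P_{n,1})_n$ cannot have its union supported at a point of $\Omega$, else all $p_{n_k}$ would coincide), which combined with the monotonicity $P_{n,m} \subset P_{n,m+1}$ should yield an alternative witness at a higher level.
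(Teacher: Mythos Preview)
Your closure-under-unions argument and your chain case are fine. The antichain case, however, has two genuine gaps.

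First, your sentence ``Writing $Q_n = \bigcup_m P_{n,m}$ as an increasing chain in $\setofprimes$'' is unjustified. An element of $\setofunions$ is by definition a countable union of members of $\setofprimes$, but there is no reason those members form a chain: in $\C_0(\Omega)$ the primes \emph{containing} a given prime are linearly ordered, but the primes \emph{contained in} a given prime (even those with a common support point) need not be. So the very objects $P_{n,m}$ you diagonalize over may not exist.

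Second, even granting an increasing representation, your diagonal sequence $(Q_{n_k})$ is not shown to be pseudo-finite. You acknowledge the difficulty in the case $k_0<j_0$ and propose to use that the support points escape to infinity; but this says nothing about membership of a fixed $a$ in $P_{n_k,i}$ for $k$ large and $i>j_0$, and ``should yield an alternative witness'' is not an argument. There is no mechanism linking the column-wise pseudo-finiteness at levels $\le i$ to behaviour at higher levels.

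The paper bypasses the antichain case entirely with a much shorter device. For each $Q_n$ choose \emph{any single} $P_n\in\setofprimes$ with $P_n\subset Q_n$; by relative compactness of $\setofprimes$ pass to a pseudo-finite subsequence $(P_{n_i})$ with union $Q\in\setofunions$. Now both $Q$ and $Q_{n_i}$ contain $P_{n_i}$, and since the primes above a given prime in $\C_0(\Omega)$ form a chain, $Q_{n_i}$ and $Q$ are comparable for every $i$. If infinitely many $Q_{n_i}\subset Q$, that subfamily is trivially pseudo-finite; if infinitely many $Q_{n_i}\supset Q$, they form a chain and the well-ordering lemma yields a non-decreasing (hence pseudo-finite) subsequence. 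No Ramsey, no diagonalization, no antichain analysis is needed.
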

\begin{proof}
    Let $(Q_n)$ be a sequence in $\setofunions$. Let
    $P_n\in\setofprimes$ such that $P_n\subset Q_n$. We can find a
    pseudo-finite subsequence $(P_{n_i})$; the union of which is denoted by
    $Q$. We have either $Q_{n_i}\subset Q$ or $Q\subset Q_{n_i}$.
    If there are infinitely many $Q_{n_i}$ contained in $Q$, then
    those $Q_{n_i}$ form a pseudo-finite sequence. On the other
    hand, if there are infinitely many $Q_{n_i}$ containing $Q$,
    then those $Q_{n_i}$ form a chain, and the previous lemma
    enable us to find an increasing sequence of ideals. Thus
    $\setofunions$ is relatively compact. The result then follows
    from the definition of $\setofunions$.
\end{proof}

\begin{lemma}
    $\setofunions$ is the set of unions of pseudo-finite sequences
    of ideals in $\setofprimes$.
\end{lemma}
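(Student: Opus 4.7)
The plan is to establish both inclusions separately. The direction $(\supseteq)$ is immediate: a pseudo-finite sequence $(P'_j)$ in $\setofprimes$ is in particular a countable sequence in $\setofprimes$, so its union $\bigcup_j P'_j$ belongs to $\setofunions$ by the definition of the closure.

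For the direction $(\subseteq)$, I will let $Q \in \setofunions$ and write $Q = \bigcup_{n=1}^{\infty} P_n$ with $P_n \in \setofprimes$; by the remark preceding the lemma, $Q$ is itself a prime ideal in $\C_0(\Omega)$. My first step is to verify that $\{P_n : n \in \naturals\}$ is a chain under inclusion. If $p \in \onepointcompactification{\Omega}$ is the support point of $Q$, then $P_n \subseteq Q \subseteq M_p$, and combining this with $J_{q_n} \subseteq P_n$ (where $q_n$ is the support of $P_n$) will force $q_n = p$ by Hausdorffness of $\onepointcompactification{\Omega}$. Hence every $P_n$ is supported at the common point $p$, and the classical chain structure of primes supported at a fixed point in $\C_0(\Omega)$ (Gillman--Jerison) then yields that $\{P_n\}$ is totally ordered by inclusion.

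Next, I will invoke the previous lemma: since $\{P_n\}$ is a chain sitting in $\setofprimes \subseteq \setofunions$, it is well-ordered by inclusion. If this chain has a maximum $R$, then $R \supseteq \bigcup_n P_n = Q$ while $R \subseteq Q$, forcing $R = Q \in \setofprimes$; the constant sequence $(Q, Q, \dots)$ is then pseudo-finite with union $Q$. Otherwise the chain has no maximum, but $\{P_n\}$ is cofinal in it (as $\bigcup_n P_n = Q$ is its supremum), so its cofinality is $\omega$. I will extract a strictly increasing $\omega$-subsequence $R_1 \subsetneq R_2 \subsetneq \cdots$ of the chain with $\bigcup_k R_k = Q$; this increasing sequence is automatically pseudo-finite, since in a chain every element of the union lies in all sufficiently large terms.

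The main obstacle I expect is justifying the chain structure of primes supported at a common point in $\C_0(\Omega)$: the paper already invokes the related fact that the primes containing a given prime form a chain, but the form I need here (primes inside a common prime are comparable) is a finer consequence of the Gillman--Jerison theory of rings of continuous functions. Once this ingredient is granted, the rest of the argument is a routine combination of the preceding well-ordering lemma with a countable cofinality argument.
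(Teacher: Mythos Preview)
Your argument for $(\subseteq)$ rests on the claim that the primes $P_n$ (all supported at the same point $p$) must form a chain, and you attribute this to the Gillman--Jerison theory. That claim is false. What Gillman--Jerison gives is that the set of primes \emph{containing} a fixed prime is a chain; it does \emph{not} say that the set of primes supported at a fixed point (equivalently, the primes between $J_p$ and $M_p$) is a chain. Indeed $J_p$ is not prime in general, and the prime spectrum of $\C_0(\Omega)$ branches downward: Section~\ref{moreproperties} and the examples of Theorem~\ref{examples} in this very paper construct large families of pairwise incomparable primes all supported at a single point $p$. So the step ``hence $\{P_n\}$ is totally ordered by inclusion'' fails, and with it the rest of your argument.

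This is exactly the difficulty the paper's proof is designed to overcome. The paper first reduces to the case where $\setofprimes$ is countable and $Q$ is the largest element of $\setofunions$, and then splits into two cases according to whether $Q$ is the union of a chain in $\setofunions\setminus\{Q\}$ or not. In the first case one gets an increasing sequence approaching $Q$ and uses relative compactness of $\setofprimes$ to extract a pseudo-finite subsequence with union $Q$; in the second case one shows there are infinitely many maximal elements of $\setofunions\setminus\{Q\}$, chooses $P_n\in\setofprimes$ under distinct such maximal elements, and again uses relative compactness to produce a pseudo-finite subsequence whose union is forced (by the maximality and distinctness) to equal $Q$. The second case is precisely the situation your chain argument cannot handle.
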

\begin{proof}
 We need only to prove that each ideal $Q\in\setofunions$ is the
 union of a pseudo-finite sequence in $\setofprimes$. For this
 purpose, we only need to consider the case where $\setofprimes$
 is countable and that $Q$ is the largest ideal in $\setofunions$.
 It is easy to see that, in this case, any chain in
 $\setofunions$ is countable.

\emph{Case 1: $Q$ is the union of a chain of ideals in
 $\setofunions\setminus\set{Q}$}. By the countability
 and well-ordering of the chain, there exist $Q_1\subsetneq Q_2\subsetneq\cdots\subsetneq
 Q$ such that $Q=\bigcup_{n=1}^\infty Q_n$. For each $n$, choose
 $P_n\in\setofprimes$ such that $P_n\subset Q_{n+1}$ but $P_n\nsubset Q_n$. By the relative
compactness of $\setofprimes$, we can choose a pseudo-finite
subsequence $(P_{n_i})$ with union $Q'$. Then $Q'\subset Q$, and
for each $i\ge 2$, either $Q_{n_i}\subset Q'$ or $Q'\subset
Q_{n_i}$. Since $P_{n_i}\nsubset Q_{n_i}$, we must have
$Q_{n_i}\subset Q'$ ($i\ge 2$). Thus $Q=Q'$.

\emph{Case 2: $Q$ is not the union of any chain of ideals in
 $\setofunions\setminus\set{Q}$}. Then any $P\in\setofunions\setminus\set{Q}$
 is contained in a maximal element of
 $\setofunions\setminus\set{Q}$. Since $Q$ cannot be the union of
 any finite number of prime ideals properly contained in $Q$, either
$\setofunions=\set{Q}$ which implies that $Q\in\setofprimes$ or
 there exists infinitely many maximal elements of $\setofunions\setminus\set{Q}$. In the latter case, let $Q_n$ ($n\in\naturals$) be distinct maximal elements of $\setofunions\setminus\set{Q}$.
 Choose $P_n\in\setofprimes$ such that $P_n\subset Q_n$. By the relative
compactness of $\setofprimes$ and without loss of generality, we
can assume that $(P_n:n\in\naturals)$ is a pseudo-finite sequence.
Set $Q'=\bigcup_{n=1}^\infty P_n$. Then $Q'\in\setofunions$,
$Q'\subset Q$, and for each $n\in\naturals$, either $Q_n\subset
Q'$ or $Q'\subset Q_n$. The maximality and distinction of
$Q_{n}$'s imply that $Q'\subset Q_n$ for at most one
$n\in\naturals$. The maximality of $Q_n$'s again implies that
$Q'=Q$.

In both cases, we see that $Q$ is the union of a pseudo-finite
sequence in $\setofprimes$.
\end{proof}

Summary; note that the property (iii) is indeed a consequence of
(ii):
\begin{proposition}
\label{compact}
    $\setofunions$ satisfies the following:
    \begin{enumerate}
        \item $\setofunions$ is the set of unions of pseudo-finite sequences
    of ideals in $\setofprimes$;
        \item $\setofunions$ is compact;
        \item each chain in $\setofunions$ is well-ordered;
        \item $\bigcap\setofprimes=\bigcap\setofunions$. \enproof
    \end{enumerate}
\end{proposition}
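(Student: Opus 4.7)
The plan is to assemble this proposition from the three lemmas immediately preceding it, together with a one-line derivation for (iv) and a short argument showing that (iii) follows from (ii) as the author indicates.

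First I would observe that items (i) and (ii) are literally the contents of the two lemmas just proved, so nothing new is required there. For (iii), although the lemma immediately above states it directly, I would record how it also drops out of (ii) alone: if some chain in $\setofunions$ fails to be well-ordered, one extracts from a non-empty subchain lacking a least element a strictly decreasing sequence $Q_1\supsetneq Q_2\supsetneq\cdots$ in $\setofunions$. Choosing $a_n\in Q_n\setminus Q_{n+1}$, each $a_n$ lies in $Q_n\subset\bigcup_k Q_k$ but fails to lie in $Q_m$ for every $m>n$, and the same is true of any subsequence; hence no pseudo-finite subsequence exists, contradicting the compactness of $\setofunions$ established in (ii).

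For (iv) I would prove two containments. Every $P\in\setofprimes$ can be viewed as the union of the constant countable family $(P,P,P,\ldots)$, so $\setofprimes\subset\setofunions$ and therefore $\bigcap\setofunions\subset\bigcap\setofprimes$. Conversely, each $Q\in\setofunions$ has the form $\bigcup_{n=1}^\infty P_n$ with $P_n\in\setofprimes$; any $f\in\bigcap\setofprimes$ lies in every $P_n$ and so in $Q$, giving $\bigcap\setofprimes\subset\bigcap\setofunions$.

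I do not expect any substantive obstacle: the proposition is explicitly a summary, and the only point requiring any care is the derivation of (iii) from (ii), where one must first pass from a non-well-ordered chain to a strictly decreasing sequence before invoking the definition of pseudo-finiteness.
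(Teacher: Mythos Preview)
Your proposal is correct and matches the paper's treatment: the proposition is explicitly stated as a summary with no proof beyond the end-of-proof symbol, the paper having already established (i)--(iii) as separate lemmas and noted in passing that (iii) is a consequence of (ii). Your derivation of (iii) from (ii) via a strictly decreasing sequence and your two-containment argument for (iv) are exactly the straightforward details one would supply; the only quibble is that the well-ordering lemma is actually three lemmas back rather than ``immediately above,'' but this does not affect the mathematics.
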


From (i), we see that $\setofunions$ is the smallest compact
family of prime ideals containing $\setofprimes$. Property (iii)
also shows that the intersection of $\setofprimes$ is equal the
intersection of its minimal elements.

\myskip

In the remaining of the section, we consider $\setofunions$ to be
any compact family of prime ideals in $\C_0(\Omega)$.

\begin{lemma}
    \label{nonredundancy}
    Let $P$ be in $\setofunions$. Then there
    exists $a\notin P$ but $a\in Q$ for all
    $Q\in\setofunions$ such that $Q\nsubset P$.
\end{lemma}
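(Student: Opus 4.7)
The plan is to argue by contradiction, using the compactness of $\setofunions$ to rule out the construction of a bad sequence in the family.

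Suppose no such $a$ exists: for every $a\in\C_0(\Omega)\setminus P$ there is some $Q\in\setofunions$ with $Q\nsubset P$ and $a\notin Q$. I would then build by induction sequences $(a_n)$ in $\C_0(\Omega)\setminus P$ and $(Q_n)$ in $\setofunions$ satisfying $Q_n\nsubset P$, $a_1\cdots a_n\notin Q_n$, and $a_{n+1}\in Q_n\setminus P$. Start by choosing any $a_1\notin P$ and taking $Q_1$ furnished by the assumption. Given the data through stage $n$, primeness of $P$ together with each $a_i\notin P$ gives $a_1\cdots a_n\notin P$, so the assumption applied to $a_1\cdots a_n$ supplies $Q_n\in\setofunions$ with the required properties; then choose $a_{n+1}\in Q_n\setminus P$, possible because $Q_n\nsubset P$. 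Observe that since $a_{n+1}\in Q_n$, in fact $a_1\cdots a_m\in Q_n$ for every $m\ge n+1$.

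Next I would extract a pseudo-finite subsequence $(Q_{n_k})$ from $(Q_n)$ using relative compactness of $\setofunions$. Set $b=a_1\cdots a_{n_2}$. On one hand, $n_2\ge n_1+1$ gives $b\in Q_{n_1}$, so $b\in\bigcup_k Q_{n_k}$. On the other hand, $b\notin Q_{n_2}$ directly from the construction, and for each $k\ge 3$ we have $a_1\cdots a_{n_k}\notin Q_{n_k}$, so primeness of $Q_{n_k}$ applied to the factorisation $a_1\cdots a_{n_k}=(a_1\cdots a_{n_2})(a_{n_2+1}\cdots a_{n_k})$ forces $b\notin Q_{n_k}$. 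Thus $b$ lies in exactly one of the $Q_{n_k}$, contradicting pseudo-finiteness, which demands $b\in Q_{n_k}$ for all but finitely many $k$.

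The main obstacle is to arrange the induction so that two primeness arguments work in tandem: primeness of $P$ to keep the products $a_1\cdots a_n$ outside $P$ (so the assumption can be invoked at every stage), and primeness of each $Q_{n_k}$ to transfer the information $a_1\cdots a_{n_k}\notin Q_{n_k}$ down to the fixed element $b=a_1\cdots a_{n_2}$. Once these are in place, compactness delivers the pseudo-finite subsequence and the contradiction is immediate.
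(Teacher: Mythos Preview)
Your proof is correct and follows essentially the same approach as the paper: build $(a_n)$ and $(Q_n)$ inductively so that $a_1\cdots a_n\notin Q_n$ but $a_1\cdots a_{n+1}\in Q_n$, then pass to a pseudo-finite subsequence and derive the contradiction using the element $a_1\cdots a_{n_2}$. The paper refers back to Lemma~\ref{non_redundancy_continuity_ideals} for the construction and is terser about the primeness arguments you spell out, but the argument is the same.
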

\begin{proof}
Assume the contrary. As in Lemma
\ref{non_redundancy_continuity_ideals}, we can construct
$(a_n)\subset A$, $(Q_n)\subset \setofunions$ satisfying
$a_1\ldots a_n\notin Q_n$ but $a_1\ldots a_{n+1}\in Q_n$
($n\in\naturals$). By the compactness, $(Q_n)$ has a pseudo-finite
subsequence $(Q_{n_i})$. However, $a_1\ldots a_{n_2}\in Q_{n_1}$
but $a_1\ldots a_{n_2}\notin Q_{n_i}$ ($i\ge 2$); a contradiction
to the pseudo-finiteness.
\end{proof}

We say that an ideal $Q$ is a \emph{roof} of $\setofunions$ if it
is the union of the ideals in a maximal chain in $\setofunions$. A
roof must be either a prime ideal in $\C_0(\Omega)$ or
$\C_0(\Omega)$ itself.

\begin{lemma}
    \label{finiteroofs}
    $\setofunions$ has only finitely many roofs. Also, there are only
    finite many maximal modular ideals in $\C_0(\Omega)$
    such that each of them contains an ideal in $\setofunions$.
\end{lemma}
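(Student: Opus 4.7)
My plan is to reduce both assertions to a single finiteness statement: the set of support points in $\onepointcompactification{\Omega}$ of ideals in $\setofunions$ is finite. The crucial ingredient is a uniqueness observation: if $P$ is a prime ideal of $\C_0(\Omega)$ supported at $p$, then $P\subset M_q$ forces $q=p$. Indeed, if $q\neq p$ one can choose $f\in J_p$ with $f(q)\neq 0$ (for distinct $p,q\in\Omega$, use disjoint open neighbourhoods and a Urysohn function; the cases where $p$ or $q$ is $\infty$ are handled analogously, using $J_\infty=\C_c(\Omega)$ and $M_\infty=\C_0(\Omega)$), and then $f\in J_p\subset P$ but $f\notin M_q$, a contradiction.

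From this observation I deduce the reduction. Any two comparable prime ideals of $\C_0(\Omega)$ have the same support, so every chain in $\setofunions$ is contained in $\setofunions_p:=\set{P\in\setofunions:\ P\ \text{is supported at}\ p}$ for some $p\in\onepointcompactification{\Omega}$; conversely, since the prime ideals of $\C_0(\Omega)$ supported at $p$ form a chain, $\setofunions_p$ is itself a chain in $\setofunions$, and therefore a maximal one. Hence the maximal chains in $\setofunions$ are precisely the non-empty sets $\setofunions_p$, and each yields one roof $R_p:=\bigcup\setofunions_p$. Since $J_p\subset R_p\subset M_p$, the roof $R_p$ is either a prime ideal supported at $p$ or (only if $p=\infty$) equals $\C_0(\Omega)$; in particular distinct $p$'s produce distinct roofs, and so the number of roofs is at most one more than the number of support points. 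For the second assertion, the maximal modular ideals of $\C_0(\Omega)$ are precisely the $M_p$ with $p\in\Omega$, and $M_p$ contains a member of $\setofunions$ exactly when $p$ is a support point of an ideal in $\setofunions$ that lies in $\Omega$. Thus both conclusions follow once the set of support points is shown to be finite.

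To prove this finiteness, suppose toward a contradiction that there are infinitely many distinct support points $p_1,p_2,\ldots$, and choose $P_i\in\setofunions$ supported at $p_i$. By the compactness of $\setofunions$, after passing to a subsequence I may assume that $(P_i)$ is itself pseudo-finite, so $Q:=\bigcup_{i=1}^\infty P_i$ is a prime ideal of $\C_0(\Omega)$ by the remark made at the start of this section; moreover $Q\neq\C_0(\Omega)$ by the first lemma of this section. Let $q$ be the unique support point of $Q$. Since $J_{p_i}\subset P_i\subset Q\subset M_q$ for every $i$, the opening observation forces $q=p_i$ for all $i$, contradicting the distinctness of the $p_i$.

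The main obstacle is the uniqueness-of-support observation underlying both the reduction and the final clash; once it is in place, everything else is a direct combination of the compactness of $\setofunions$ with the two lemmas recalled at the start of the section.
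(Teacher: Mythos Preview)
Your reduction to finiteness of support points is sound for the \emph{second} assertion, and your final paragraph correctly proves that finiteness. However, the argument for the \emph{first} assertion rests on a false claim: you assert that ``the prime ideals of $\C_0(\Omega)$ supported at $p$ form a chain,'' and use this to identify the maximal chains in $\setofunions$ with the sets $\setofunions_p$. What is true (and what the paper records in \S\ref{definition}) is only that the primes \emph{containing a given prime} form a chain; the primes lying between $J_p$ and $M_p$ are in general far from linearly ordered. Indeed, the compact families built in Theorem~\ref{examples} are all supported at a single point yet contain infinitely many pairwise incomparable primes. So finiteness of the support set does not, by itself, bound the number of maximal chains or the number of roofs, and your identification $\{\text{maximal chains}\}=\{\setofunions_p\}$ collapses.

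A smaller point: your ``uniqueness observation'' as stated is also not quite right when $q=\infty$, since $M_\infty=\C_0(\Omega)$ contains every prime; the correct (and sufficient) statement is the one you also mention, that comparable primes have the same support.

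To repair the roof count you need to argue directly from compactness, as the paper does: if there were infinitely many pairwise \emph{disjoint} maximal chains in $\setofunions\setminus\{\C_0(\Omega)\}$, pick $Q_n$ from the $n$th chain, pass to a pseudo-finite subsequence, and note that its union $Q$ lies in $\setofunions\setminus\{\C_0(\Omega)\}$ and is comparable with each $Q_n$, hence belongs to every one of the chosen maximal chains---contradicting disjointness. This yields both assertions at once (each roof and each relevant $M_p$ determines a distinct disjoint maximal chain).
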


\begin{proof}
    We shall prove that there can be only finitely many \emph{disjoint} maximal chains in
    $\setofunions\setminus\set{\C_0(\Omega)}$; the lemma then follows. Assume the contrary
    that $\mathfrak{C}_n$ ($n\in\naturals$) are disjoint maximal
    chains in $\setofunions\setminus\set{\C_0(\Omega)}$. Pick $Q_n\in\mathfrak{C}_n$. Without
    loss of generality, we can suppose that $(Q_n)$ is
    pseudo-finite; the union is denoted by $Q$. We see that
    $Q\in\setofunions\setminus\set{\C_0(\Omega)}$, and so
    $Q\in\mathfrak{C}_n$ ($n\in\naturals$), contradicting the
    disjointness of $\mathfrak{C}_n$'s.
\end{proof}

The following lemma and proposition are based on a suggestion of an
anonymous referee of an initial version of our previous paper.

\begin{lemma}
  \label{divisibility}
    Suppose that $\setofunions$ is a compact family of prime ideals in
    $\C_0(\Omega)$ with only one maximal element $Q$. Set
    $I=\bigcap\setofunions$. Let $a\in \C_0(\Omega)\setminus Q$
    and let $b\in Q$. Then there exists $s\in Q$ such that
    $as-b\in I$.
\end{lemma}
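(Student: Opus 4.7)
The plan is to exploit the prime-ideal structure of $\C_0(\Omega)$ together with the compactness hypothesis on $\setofunions$. First, I would make two setup reductions. Since $Q$ is a prime ideal in $\C_0(\Omega)$, it is supported at a unique point $p\in\onepointcompactification{\Omega}$; and because every $P\in\setofunions$ is contained in $Q$ while the primes of $\C_0(\Omega)$ containing a given prime form a chain (and hence share a maximal ideal), each $P\in\setofunions$ is supported at the same $p$, giving $J_p\subseteq I\subseteq Q\subseteq M_p$. Moreover, it suffices to find $s\in\C_0(\Omega)$ with $as-b\in I$: then $as=b+(as-b)\in Q$, and primeness of $Q$ together with $a\notin Q$ force $s\in Q$ automatically.

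I would then split into cases according to whether $a$ vanishes at $p$. In the easy case, when $p\in\Omega$ and $a(p)\neq 0$, choose a bump function $\chi\in\C_c(\Omega)$ with $\chi\equiv 1$ on a neighborhood of $p$ and $\mathrm{supp}(\chi)\subseteq\{x:a(x)\neq 0\}$, and set $s=\chi b/a\in\C_0(\Omega)$. Then $as-b=(\chi-1)b$, and since $\chi-1$ vanishes on a neighborhood of $p$, this lies in $J_p\subseteq I$.

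The harder case is when $a(p)=0$, which includes the case where $p$ is the point at infinity and $M_p=\C_0(\Omega)$. Here the bump construction fails because $\chi b/a$ need not be continuous at zeros of $a$ inside the support of $\chi$. My approach would use the classical fact (Gillman--Jerison) that $\C(\Omega)/P$ is a valuation ring for each prime $P$: since $\bar a\notin\bar Q$ has strictly smaller valuation than $\bar b\in\bar Q$ in $\C_0(\Omega)/P$, one obtains $\bar a\mid\bar b$, which yields an individual $s_P\in\C_0(\Omega)$ with $as_P-b\in P$ for each $P\in\setofunions$.

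The main obstacle is gluing these local solutions $(s_P)$ into a single $s$ valid for every $P$. Since incomparable primes of $\setofunions$ all lie below $Q$, they are not pairwise comaximal, so a direct Chinese Remainder Theorem does not apply. My strategy would be a compactness-based contradiction: assuming no uniform $s$ exists, inductively build sequences $(s_n)\subseteq\C_0(\Omega)$ and distinct $(P_n)\subseteq\setofunions$ with $as_n-b\in P_0\cap\cdots\cap P_{n-1}$ but $as_n-b\notin P_n$, and then pass to a pseudo-finite subsequence of $(P_n)$ whose union $P^*\in\setofunions$ yields a contradiction from the behavior of $(as_n-b)$ with respect to $P^*$. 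The well-ordering of chains in $\setofunions$ would help organize the iterative improvement at each step so that handling a new prime does not destroy the already-achieved constraints for primes lying below those handled so far.
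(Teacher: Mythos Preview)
Your overall architecture---solve $as\equiv b$ modulo each individual prime and then run a compactness contradiction---matches the paper's, but there is a genuine gap at the inductive step, and it is precisely the crux of the lemma. You want $s_n$ with $as_n-b\in P_0\cap\cdots\cap P_{n-1}$ and then a new $P_n$ with $as_n-b\notin P_n$; you acknowledge the difficulty (``handling a new prime does not destroy the already-achieved constraints'') but give no mechanism. The well-ordering of chains in $\setofunions$ does not help here: the $P_i$ need not form a chain, and a generic correction $t$ with $at\equiv -(as_{n-1}-b)\pmod{P_n}$ carries no information modulo the earlier $P_i$. Moreover, even granting the induction, your sequence only yields $as_m-b\in P_n$ for $n<m$; you have no control over whether $as_m-b\in P_n$ for $n>m$, and without that two-sided control a pseudo-finite subsequence $(P_{n_i})$ does not produce a contradiction (an element lying in $P_{n_1}$ alone is perfectly compatible with pseudo-finiteness unless you also know it avoids all later $P_{n_i}$).

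The paper supplies the missing idea, and it is analytic rather than order-theoretic. Writing $b_n=b-as_n$, one does \emph{not} solve $as'\equiv b_n$ modulo a chosen bad prime $P_0$; instead one solves $as'\equiv b_n/\sqrt{|b_n|}\pmod{P_0}$ and sets $s_{n+1}=s_n+s'\sqrt{|b_n|}$, so that
\[
b_{n+1}=\Bigl(\frac{b_n}{\sqrt{|b_n|}}-as'\Bigr)\sqrt{|b_n|}
\]
is a multiple of $\sqrt{|b_n|}$. Since $b_n\in P$ forces $\sqrt{|b_n|}\in P$ for any prime $P$, the ``bad set'' $\setofunions_n=\{P\in\setofunions: b_n\notin P\}$ is strictly decreasing. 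This single trick simultaneously (a) guarantees that correcting at a new prime never destroys what was already achieved, and (b) gives, for $P_n\in\setofunions_n\setminus\setofunions_{n+1}$, the two-sided relation $b_m\in P_n\iff m>n$, which is exactly what contradicts pseudo-finiteness of a subsequence. Your case split on whether $a(p)\neq 0$ is correct but unnecessary once this device is in place; the paper treats all cases uniformly.
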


\begin{proof}
    It is standard that for each prime ideal $P\subset Q$ there
    exists $s\in Q$ such that $as-b\in P$.

    Assume toward a contradiction that for all $s\in
    Q$ we have $as-b\notin I$. Set $s_1=0$,
    $b_1=b-as_1=b$, and $\setofunions_1=\set{P\in\setofunions:\
    b_1\notin P}$. Suppose that we have already construct $s_n\in Q$,
    $b_n=b-as_n$, and $\setofunions_n=\set{P\in\setofunions:\
    b_n\notin P}$ such that
    \[
        \setofunions_n\subsetneq\cdots\subsetneq\setofunions_1.
    \]
    By the assumption, we have $\setofunions_n\neq\emptyset$.
    Choose $P_0\in\setofunions_n$.
    Since $\displaystyle{\frac{b_n}{\sqrt{\abs{b_n}}}}\in Q$.
    There exists $s'\in Q$ such that $as'-\displaystyle{\frac{b_n}{\sqrt{\abs{b_n}}}}\in
    P_0$. Set $s_{n+1}=s_n+s'\sqrt{\abs{b_n}}$,
    \[
        b_{n+1}=b-as_{n+1}=\left(\frac{b_n}{\sqrt{\abs{b_n}}}-as'\right)\sqrt{\abs{b_n}},
    \]
    and $\setofunions_{n+1}=\set{P\in\setofunions:\
    b_{n+1}\notin P}$. We see that $\setofunions_{n+1}\subsetneq
    \setofunions_n$. Thus the construction can be
    continued inductively.

    Choose $P_n\in\setofunions_{n}\setminus
    \setofunions_{n+1}$ for each $n$; then $b_m\in P_n$ ($m>n$) but $b_m\notin P_n$ ($m\le n$). The compactness implies that
    there exists a pseudo-finite subsequence $(P_{n_i})$, whose
    union is denoted by $P$. For all $j>1$, $b_{n_j}\in P_{n_1}$
    so, $b_{n_j}\in P$. In particular, $b_{n_2}\in P$. On the other hand, for all $2\le i$, $b_{n_2}\notin
    P_{n_i}$, and so $b_{n_2}\notin P=\bigcup_{i=2}^\infty
    P_{n_i}$; a contradiction.
\end{proof}

\begin{proposition}
    \label{directdecomposition}
    Suppose that $\setofunions$ is a compact family of prime ideals in
    $\C_0(\Omega)$ with only one maximal element $Q$. Set
    $I=\bigcap\setofunions$. Let $P\in\setofunions$, and let $A$ be a subalgebra of
    $\C_0(\Omega)$. Suppose that $\C_0(\Omega)=A+P$ and $A\cap P$
    is the intersection of a sub-family of $\setofunions$. Let $B$ be a subalgebra of
    $A$ such that $B$ is maximal with respect to the property
    that $B\cap Q\subset I$. Then $\C_0(\Omega)=B+Q$.
\end{proposition}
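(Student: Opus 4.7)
The strategy is to argue by contradiction, exploiting the maximality of $B$. Since $P\subset Q$ and $\C_0(\Omega)=A+P$, we have $\C_0(\Omega)=A+Q$, so it suffices to prove $A\subset B+Q$. Suppose, for contradiction, that some $a\in A$ satisfies $a\notin B+Q$; then in particular $a\notin Q$ and $a\notin B$. The subalgebra $B'$ of $A$ generated by $B\cup\set{a}$ strictly contains $B$, so by maximality $B'\cap Q\not\subset I$, and we may select $x\in B'\cap Q\setminus I$ of minimal $a$-degree $N$. Writing
\[
    x=b_0+\sum_{k=1}^{N}(b_k+\lambda_k)a^k
\]
with $b_0,\ldots,b_N\in B$ and $\lambda_1,\ldots,\lambda_N\in\complexs$, the case $N=0$ is immediately ruled out, as then $x=b_0\in B\cap Q\subset I$.

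If $N\ge 1$ and $b_0\in Q$, then $b_0\in B\cap Q\subset I$, and factoring $x-b_0=ay$ with $y$ of $a$-degree $N-1$, the primeness of $Q$ together with $a\notin Q$ forces $y\in Q$, while $ay\notin I$ forces $y\notin I$; after checking that $y\in B'$ (using $y\in\C_0(\Omega)$ to rule out any nonzero scalar summand in the non-unital case), this contradicts the minimality of $N$. The main case is therefore $b_0\notin Q$. Here Lemma~\ref{divisibility}, applied to the unique maximal element $Q$ of $\setofunions$, provides $s\in Q$ with $as\equiv x\pmod{I}$; decomposing $s=s_A+s_P$ via $\C_0(\Omega)=A+P$ with $s_A\in A$ and $s_P\in P$, and comparing with the explicit expansion of $x$, one extracts a relation of the form
\[
    b_0+\lambda_1 a+au'-as_A\in A\cap P
\]
for an appropriate $u'\in A$ built from the polynomial coefficients $b_k$, $\lambda_k$.

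We then invoke the hypothesis $A\cap P=\bigcap\fsetofprimes$ for some sub-family $\fsetofprimes\subset\setofunions$: the left-hand side then lies in every ideal of $\fsetofprimes$, each of which is a prime contained in $Q$. Reducing this containment modulo $I$ and using Lemma~\ref{divisibility} one further time on each such prime, we extract either an element of $B'\cap Q\setminus I$ of $a$-degree strictly less than $N$, contradicting minimality, or a direct decomposition $a\in B+Q$, contradicting the choice of $a$. The main obstacle is engineering this final reduction so that the terms with $B$-coefficients stay in $B$ while the scalar remainders $\lambda_k$ and the $P$-residue $s_P$ are absorbed using the fact that $A\cap P$ is cut out precisely by the ideals in $\fsetofprimes$; additional bookkeeping is needed to treat the unital and non-unital cases of $\C_0(\Omega)$ uniformly, especially when handling the scalar $\lambda_1$ appearing in the linear coefficient.
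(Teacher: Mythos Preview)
Your setup and the reduction of the cases $N=0$ and $b_0\in Q$ are fine, but the ``main case'' $b_0\notin Q$ is where the argument has a genuine gap. After reaching the relation $b_0+\lambda_1 a+au'-as_A\in A\cap P$, you offer no concrete mechanism for extracting either a lower-degree element of $B'\cap Q\setminus I$ or a decomposition $a\in B+Q$; the sentence ``using Lemma~\ref{divisibility} one further time on each such prime'' does not do any work. Lemma~\ref{divisibility} only tells you that elements of $Q$ are divisible (modulo $I$) by any element outside $Q$, i.e.\ that $Q/I$ lies in the Jacobson radical. That is a statement about \emph{linear} equations $as\equiv b$, and repeated application cannot by itself solve the \emph{polynomial} equation $q(a)\equiv 0$ that you are implicitly facing. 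Already in the simplest instance $N=1$, $\lambda_1=0$, you have $b_0+b_1a\in Q$ with $b_0,b_1\in B$ and $b_0\notin Q$; to get $a\in B+Q$ you would need to produce an inverse of $b_1$ modulo $Q$ lying in $B$, and nothing in your outline supplies this.

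The missing ingredient, which the paper's proof uses essentially, is that the local rings $\unitization{(\C_0(\Omega)/P)}$ (and hence $\unitization{(A/(A\cap P))}$, via the isomorphism coming from $\C_0(\Omega)=A+P$) are \emph{Henselian}. The paper takes a minimal-degree polynomial $q$ with coefficients in $B/I$ such that $q(a)\in Q/I$, observes $q'(a)\notin Q/I$, and then uses the Henselian property twice: first to perturb $a$ within $A/I$ so that $q(a)\in (A\cap P)/I$, and then to perturb again (now using that $(A\cap P)/I$ is an ideal in $\C_0(\Omega)/I$) so that $q(c)=0$ exactly. Only then does the minimality of $\deg q$ and a standard divisibility argument force $c\in B/I$, yielding $a\in B/I+Q/I$. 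Your outline recovers the first perturbation in disguise (that is essentially what your relation in $A\cap P$ amounts to), but never invokes Hensel, and without it the ``final reduction'' you describe cannot be carried out.
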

\begin{proof}
    Note that $Q/I\subset \rad\C_0(\Omega)/I$; this follows from the previous lemma. Also
    that $I\subset B$, so indeed $B\cap Q=I$.

    \emph{Claim: for each $a\in A\setminus Q$
    and each $b\in A\cap Q$, there exists $s\in A\cap Q$ such that
    $as-b\in I$.} Indeed, by the previous lemma, there exists
    $s\in Q$ such that $as-b\in I$. Write $s=c+p$ where $c\in A$
    and $p\in P$. Then  $ap+(ac-b)\in I\subset A$ implies that $ap\in A\cap
    P$. Since $a\notin Q$ and $A\cap P$ is the intersection of
    a family of prime ideals contained in $Q$, we must have $p\in
    A\cap P$. Thus $s=c+p\in A\cap Q$.

    The above claim shows in particular that $(A\cap Q)/I \subset
    \rad A/I$. We shall prove that $A/I=B/I\oplus (A\cap Q)/I$; the proposition
    then follows.

    Assume toward a contradiction. Let $a\in A/I$ but $a\notin B/I\oplus (A\cap Q)/I$.
    By the maximality of
    $B$, there exists a non-zero polynomial $q(X)$ with coefficients in $B/I$
    such that $q(a)\in Q/I$. Let $q(X)$ be
    such a polynomial with smallest degree. Then $q'(a)\notin
    Q/I$, where $q'(X)$ is the formal derivative. Let $s\in \C_0(\Omega)/I$. Then
    \[
        q(a+q'(a)s)=q(a)+q'(a)^2s+\ldots+q'(a)^nq^{(n)}(a)\frac{s^n}{n!};
    \]
    where $q^{(k)}(X)$ is the formal $k^{\textrm{th}}$ derivative of
    $q(X)$.
    By the claim, there exists $d\in (A\cap Q)/I$ such that
    $q(a)=q'(a)^2d$. So
    \[
        q(a+q'(a)s)=q'(a)^2\left(d+s+\ldots+q'(a)^{n-2}q^{(n)}(a)\frac{s^n}{n!}\right).
    \]

    Since $\unitization{(A/(A\cap P))}\cong\unitization{(\C_0(\Omega)/P)}$ is Henselian, there exist
    $s\in \rad A/I$ such that
    \[
        d+s+\ldots+q'(a)^{n-2}q^{(n)}(a)\frac{s^n}{n!}\in (A\cap P)/I;
    \]
    (\cite{dales2000}, Theorem 2.4.30 and Proposition 1.6.3). It follows that $s\in
    (A\cap Q)/I$ and
    \[
        q(a+q'(a)s)\in (A\cap P)/I.
    \]
    If we set $b=a+q'(a)s$, then $b\in A/I$ but $b\notin B/I\oplus (A\cap Q)/I$,
    $q(b)\in (A\cap P)/I$, and $q(X)$
    is the smallest degree non-zero polynomial with coefficients in $B/I$
    such that $q(b)\in Q/I$. So without loss of generality, we can assume that $q(a)\in (A\cap
    P)/I$.

    The case where $q(a)\in (A\cap P)/I$: Then $d\in (A\cap P)/I$.
    Since $\unitization{\C_0(\Omega)}/I$ is Henselian, there exist
    $t\in\rad\C_0(\Omega)/I$ such that
    \[
        0=d+t+\ldots+q'(a)^{n-2}q^{(n)}(a)\frac{t^n}{n!}.
    \]
    Since $(A\cap P)/I$ is an ideal in $\C_0(\Omega)/I$,
    it follows that $t\in (A\cap P)/I$.
    Set $c=a+q'(a)t$. Then $c\in A/I$, $q(c)=0$, and $q(X)$
    is a non-zero polynomial with coefficients in $B/I$ with the smallest degree
    such that $q(c)\in Q/I$. Let $p(X)$ be any polynomial with coefficients in $B/I$
    such that $p(c)\in Q/I$. We see that there exist non-zero element $u\in
    B/I$ and a polynomial $h(X)$ with coefficient in $B/I$ such
    that $up(X)=q(X)h(X)$. Then $up(c)=q(c)h(c)=0$, and since
    $u\notin Q/I$, we deduce that $p(c)=0$. The maximality of $B$
    then implies that $c\in B/I$, and so $a=c-q'(a)t\in B/I+Q/I$; a contradiction.
\end{proof}

A special case of the previous proposition is when
$A=\C_0(\Omega)$.

\section{The main results}

\label{main_results}

In this section, we shall show the connection between continuity
ideals (as well as kernels) of homomorphisms from $\C_0(\Omega)$
into Banach algebras and intersections of (relatively) compact
families of prime ideals. One direction is an immediate
consequence of the results in section
\ref{abstract_continuity_ideals}, so most of this section concerns
the converse.

We shall need some basic complex algebraic-geometry results. Our
references for algebraic geometry will be \cite{kendig1977}. For a
set $S\subset\complexs[Z_1,Z_2,\ldots, Z_n]$, denote by
$\variety(S)$ the \emph{variety} (i.e., common zero set) of $S$ in
$\complexs^n$. For each prime ideal $Q$ in
$\complexs[Z_1,\ldots,Z_n]$, the variety $\variety(Q)$ is
\emph{irreducible}. The topology considered on complex spaces will
be the Euclidean topology. We shall need the fact that, for each
irreducible variety $V$ and each variety $W$ not containing $V$,
$V\setminus W$ is dense and (relatively) open in $V$ \cite[Chapter
IV, Theorem 2.11]{kendig1977}.

\begin{notation} For clarity, we shall use $X_i, Y_j$ for variables, $x_i, y_j$
for complex numbers, and $a_i, b_j$ for elements of an algebra.
When there is no ambiguity, we shall use boldface characters to
denote tuples of elements of the same type; for example, we set
\[
    \tuple{X} =(X_1, X_2,\ldots, X_m)\quad \textrm{or}\quad \tuple{y}=(y_1,\ldots,y_n)\,.
\]
In the case where $\tuple{X}=(X_1,\ldots,X_m)$, we also denote by
$\complexs_{\tuple{X}}$ the corresponding space $\complexs^m$.
\end{notation}

\begin{lemma}
    \label{fip_algebraicgeometry2}
    Let $m,n\in\naturals$, and let $Q$ be a prime ideal
    in $\complexs[\tuple{X},\tuple{Y}]$, where $\tuple{X}=(X_1,\ldots, X_m)$
    and $\tuple{Y}=(Y_1, \ldots, Y_n)$. Consider $Q_{\tuple{X}}=Q\cap\complexs[\tuple{X}]$
    as a prime ideal in $\complexs[\tuple{X}]$.
    Let $V$ be the variety of $Q$, and let $V_{\tuple{X}}$ be the variety of $Q_{\tuple{X}}$.
    Let $\pi$ be the natural projection
    $\complexs_{\tuple{X},\tuple{Y}}\to \complexs_{\tuple{X}}$.
    Then $\pi :V\to V_{\tuple{X}}$ and
    there exists a dense open subset $U$ of $V$ such that
     $\pi:U\to V_{\tuple{X}}$ is an open map.
\end{lemma}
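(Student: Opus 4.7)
The plan is to verify the containment $\pi(V)\subset V_{\tuple{X}}$, which is immediate --- if $(\tuple{x},\tuple{y})\in V$ and $f\in Q_{\tuple{X}}\subset Q$, then $f(\tuple{x})=f(\tuple{x},\tuple{y})=0$ --- and then to establish the existence of the dense open subset $U$ by induction on $n$. In the inductive step, factor $\pi$ through $V\to V'\to V_{\tuple{X}}$, where $V'$ is the (irreducible) variety of $Q':=Q\cap\complexs[\tuple{X},Y_1,\ldots,Y_{n-1}]$. The inductive hypothesis applied to the second arrow and the base case applied to the first each produce a non-empty Zariski-open (and hence dense) subset of openness; since the preimage under the polynomial projection $V\to V'$ of a non-empty Zariski-open subset of $V'$ is again a non-empty Zariski-open subset of $V$, the intersection of these two sets is a dense open subset of $V$ on which $\pi$ is open, as a composition of open maps.

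For the base case $n=1$, I split into two subcases. If $Q=Q_{\tuple{X}}\cdot\complexs[\tuple{X},Y]$ then $V=V_{\tuple{X}}\times\complexs$ and $\pi$ is evidently an open map on all of $V$. Otherwise $Q$ contains some polynomial of positive $Y$-degree whose leading coefficient is outside $Q_{\tuple{X}}$; pick such a $g\in Q$ with $Y$-degree $d$ minimal, and let $a_d(\tuple{X})$ be its leading coefficient. The image $y$ of $Y$ in $\complexs[\tuple{X},Y]/Q$ is then algebraic over the fraction field $K$ of $A:=\complexs[\tuple{X}]/Q_{\tuple{X}}$, with minimal polynomial $g/a_d$. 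Because $\complexs$ has characteristic zero, $\partial g/\partial Y$ has $Y$-degree $d-1$ and leading coefficient $d\,a_d\notin Q_{\tuple{X}}$, so minimality of $d$ forces $\partial g/\partial Y\notin Q$. Writing $Q=(f_1,\ldots,f_N)$ and dividing each $f_i$ by $g/a_d$ in $K[Y]$, the remainder must vanish (being annihilated by $y$ but of $Y$-degree less than $d$), and clearing denominators produces a single $s(\tuple{X})\notin Q_{\tuple{X}}$ together with $q_i(\tuple{X},Y)$ such that $s\,a_d\,f_i-q_i\,g\in Q_{\tuple{X}}\cdot\complexs[\tuple{X},Y]$ for every $i$.

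Set $U=V\setminus\variety(a_d\cdot(\partial g/\partial Y)\cdot s)$, which is dense open in $V$ since none of the three factors lies in $Q$. At each $(\tuple{x}_0,y_0)\in U$, $(\partial g/\partial Y)(\tuple{x}_0,y_0)\neq 0$, so the implicit function theorem yields neighbourhoods $W_1\ni\tuple{x}_0$ in $\complexs^m$ and $W_2\ni y_0$ in $\complexs$ and a holomorphic $\phi:W_1\to W_2$ with $\phi(\tuple{x}_0)=y_0$ such that $\{(\tuple{x},y)\in W_1\times W_2:g(\tuple{x},y)=0\}$ equals the graph of $\phi$. For $\tuple{x}\in V_{\tuple{X}}\cap W_1$, the identities $s\,a_d\,f_i\equiv q_i\,g\pmod{Q_{\tuple{X}}\cdot\complexs[\tuple{X},Y]}$ together with $g(\tuple{x},\phi(\tuple{x}))=0$ and $s(\tuple{x})a_d(\tuple{x})\neq 0$ (after possibly shrinking $W_1$) force $f_i(\tuple{x},\phi(\tuple{x}))=0$ for all $i$, so $(\tuple{x},\phi(\tuple{x}))\in V$; conversely any point of $V$ in $W_1\times W_2$ lies on the graph of $\phi$ and projects into $V_{\tuple{X}}\cap W_1$. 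Thus $\pi|_{V\cap(W_1\times W_2)}$ is a homeomorphism onto the open set $V_{\tuple{X}}\cap W_1$, and $\pi|_U$ is a local homeomorphism, hence open. The main obstacle is precisely the verification that $V$ locally coincides with the entire graph of $\phi$ over $V_{\tuple{X}}$ rather than a proper subset of it --- this is the role of $s$, which captures, modulo $g$ and $Q_{\tuple{X}}$, the vanishing on $V$ of the extra generators of $Q$ beyond $g$ itself.
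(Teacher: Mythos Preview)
Your argument is correct. The inductive factoring through $V'=\variety(Q\cap\complexs[\tuple{X},Y_1,\ldots,Y_{n-1}])$ works because $\pi'(V)$ is Zariski-dense in $V'$ (any polynomial vanishing on $\pi'(V)$ lies in $Q\cap\complexs[\tuple{X},Y_1,\ldots,Y_{n-1}]$ by the Nullstellensatz), so pulling back a non-empty Zariski-open set stays non-empty; and your base case is a clean implicit-function-theorem argument, with the element $s$ correctly accounting for the passage from ``$g=0$'' to ``all $f_i=0$'' over $V_{\tuple{X}}$.

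The paper takes a rather different route. Instead of inducting on $n$, it chooses a transcendence basis $(X_1,\ldots,X_k)$ for $\complexs[\tuple{X}]$ modulo $Q_{\tuple{X}}$ and factors $\pi$ through the further projection to $\complexs^k$. It then invokes \cite[Lemma 6.3]{pham2005b} as a black box to obtain dense open subsets $U\subset V$ and $U_{\tuple{X}}\subset V_{\tuple{X}}$ on which the projections to $\complexs^k$ are open, with the second in fact a local homeomorphism onto its image; a dimension argument lets one arrange $\pi_2(U_{\tuple{X}})\cap\pi_2(V_{\tuple{X}}\setminus U_{\tuple{X}})=\emptyset$, after which the desired openness of $\pi$ on $U'=U\cap\pi_1^{-1}\bigl(\pi_1(U)\cap\pi_2(U_{\tuple{X}})\bigr)$ follows by composing the open map $\pi_1$ with the local inverse of $\pi_2$. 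Your approach is more self-contained---it needs only the holomorphic implicit function theorem and basic facts about irreducible varieties---whereas the paper's approach is shorter on the page but leans on an external structural lemma (essentially a Noether-normalization/branched-cover picture). Both yield a Zariski-open $U$, and both in fact show that $\pi|_U$ is a local homeomorphism onto its image in $V_{\tuple{X}}$, which is slightly stronger than the stated openness.
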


\begin{proof}
Obviously, $\pi :V\to V_{\tuple{X}}$. Without loss of generality,
let $(X_1,\ldots, X_k)$ be a transcendental basis for
$\complexs[\tuple{X}]$ modulo $Q_{\tuple{X}}$. We consider
$\complexs^k=\complexs_{X_1,\ldots,X_k}$. Denote by $\pi_1$ the
natural projection $\complexs_{\tuple{X},\tuple{Y}}\to
\complexs^k$, and by $\pi_2$ the natural projection
$\complexs_{\tuple{X}}\to \complexs^k$.

By \cite[Lemma 6.3]{pham2005b}, there exist dense open subsets $U$
and $U_{\tuple{X}}$ of $V$ and $V_{\tuple{X}}$, respectively, such
that $\pi_1:U\to \complexs^k$ and
$\pi_2:U_{\tuple{X}}\to\complexs^k$ are open maps. Inspecting the
proof of \cite[Lemma 6.3]{pham2005b}, we see that $U_{\tuple{X}}$
can be chosen as $V_{\tuple{X}}\setminus V_0$, where $V_0$ is a
proper subvariety of $V_{\tuple{X}}$, and that $\pi_2$ is even a
local homeomorphism from $U_{\tuple{X}}$ onto an open subset of
$\complexs^k$. Since $V_0$ has dimension at most $k-1$
\cite[Chapter IV]{kendig1977}, we can further require that
\[
    \pi_2(U_{\tuple{X}})\cap\pi_2(V_{\tuple{X}}\setminus U_{\tuple{X}})=\emptyset.
\]

Let $W=\pi_1(U)\cap\pi_2(U_{\tuple{X}})$. Then $W$ is an open set
in $\complexs^k$. It can be seen that $W$ is dense in $\pi_1(U)$.
Set
    \[
        U'=U\cap\pi_1^{-1}(W).
    \]
Then $U'$ is a dense open subset of $V$. Let
$(\tuple{x},\tuple{y})\in U'$. We can see that $\pi:U'\to
V_{\tuple{X}}$ is an open map.
\end{proof}

\begin{proposition}
    \label{fip_mainalgebraictheorem}
    Let $A=\C_0(\Omega)$ for a locally compact space $\Omega$,
    and let $\setofunions$ be a non-empty compact family of
    non-modular prime ideals in $A$. Suppose that each chain in
    $\setofunions$ is countable.
    Then there exist a cardinal $\kappa$, a free ultrafilter $\U$ on
    $\kappa$, and, for each $P\in \setofprimes$, a homomorphism
    $\theta_P :A\to\infinitesimals{\ultrapower}$ such that:
    \begin{itemize}
        \item[(a)] $\ker \theta_P=P$ $(P\in \setofprimes)$, and
        \item[(b)] the set $\set{\theta_P(a): P\in\setofprimes}$ is finite for each $a\in A$.
    \end{itemize}
\end{proposition}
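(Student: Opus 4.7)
The plan is to carry out a coordinated Dales--Esterle-style construction of the family $\set{\theta_P : P\in\setofunions}$, with a single ambient universal commutative radical Banach algebra $\infinitesimals{\ultrapower}$ as the common target. Under CH and with $\kappa=\continuum$, this algebra is universal by \cite[Theorem 5.7.28]{dales2000}, so it admits embeddings of enough integral domains of cardinality at most $\continuum$ into its infinitesimals. Each $\theta_P$ will factor as the quotient map $A\to A/P$ followed by a suitable embedding of the integral domain $A/P$ into $\infinitesimals{\ultrapower}$; the coherence condition (b) forces these embeddings to be chosen jointly, not one at a time.

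\textbf{Reduction and transfinite construction.} By Lemma \ref{finiteroofs}, $\setofunions$ has only finitely many roofs $Q_1,\ldots,Q_N$. I would first reduce to the single-roof case by handling each subfamily $\set{P\in\setofunions : P\subset Q_i}$ on disjoint blocks of the index set $\kappa$ and combining the constructions; since each block will contribute only finitely many values of $\theta_P(a)$, condition (b) is preserved under the union. For a single-roof family, the well-ordered, countable chain structure (Proposition \ref{compact}(iii) together with the hypothesis) supports a transfinite recursion on $\setofunions$ ordered by reverse inclusion. At each stage I would enlarge a growing transcendence basis for $\unitA$ modulo $\bigcap\setofunions$, assigning the new basis elements suitable infinitesimal values in the ultrapower. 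Lemma \ref{fip_algebraicgeometry2} is the key technical input: as we move up the chain to a larger prime $P$, the polynomial relations among a fixed finite tuple in $A$ become tighter, and the lemma guarantees that projections between the relevant varieties remain open on dense subsets, so new ultrapower coordinates consistent with the prior choices can always be selected.

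\textbf{Condition (b) and the main obstacle.} For each $a\in A$, the subfamily $\setofunions_a=\set{P\in\setofunions : a\notin P}$ is itself a compact family of prime ideals: if $(P_n)\subset\setofunions_a$ is pseudo-finite with union $P\in\setofunions$, then $a\in P$ would force $a\in P_n$ for all but finitely many $n$, a contradiction, so $P\in\setofunions_a$. Hence $\setofunions_a$ has only finitely many roofs by Lemma \ref{finiteroofs}, and (b) should follow by arranging in the recursion that $\theta_P(a)$ depends only on which roof of $\setofunions_a$ contains $P$ (plus the value $0$ when $a\in P$). The main obstacle, as I see it, is enforcing this roof-dependence for all $a\in A$ \emph{simultaneously}, without conflicting with the homomorphism property or with $\ker\theta_P=P$. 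I expect Proposition \ref{directdecomposition} to play a central role here: the direct-sum decomposition $A/I=B/I\oplus Q/I$ that it provides separates an ``algebraically free'' part $B$ on which $\theta_P$ can be prescribed somewhat freely from the ``radical'' part $Q$, whose image must be controlled by the smaller primes in the family. This decomposition should let the hierarchical, roof-dependent assignment of ultrapower values be carried out consistently throughout the transfinite recursion.
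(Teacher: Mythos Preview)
Your outline has the right ingredients (Proposition \ref{directdecomposition}, Lemma \ref{fip_algebraicgeometry2}, compactness), but there are two genuine gaps.

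First, you have misread the target. The proposition does \emph{not} assume CH, and $\infinitesimals{\ultrapower}$ is not a universal radical Banach algebra: it is the ideal of infinitesimals in the ultrapower $\complexs^\kappa/\U$ of the complex field, an algebraic object with no norm. The cardinal $\kappa$ and the ultrafilter $\U$ are part of what you must construct. In the paper, $\kappa$ is taken to be the directed set of tuples $(\delta;\fsetofunions;a_1,\ldots,a_m)$ (small $\delta$, finite $\fsetofunions\subset\setofunions$, finite tuple in $A$), and $\U$ is any ultrafilter majorizing this net. The homomorphisms $\theta_P$ are then defined coordinatewise: at each tuple $w$ one chooses a point $\tuple{x}^{(P)}$ on the variety of $\set{p\in\complexs[\tuple{X}]:p(\tuple{a})\in P}$, with $\abs{x^{(P)}_k}<\delta$, and sets $\theta_P(a_k)(w)=x^{(P)}_k$. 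No universality theorem or CH enters; those appear only later, in Theorem \ref{main_theorem_radical}, when one wants to land in an actual Banach algebra.

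Second, and more seriously, your mechanism for (b) is both too strong and missing the key structural device. You propose to make $\theta_P(a)$ depend only on the roof of $\setofunions_a$ above $P$; but for incomparable $P_1,P_2$ under the same roof there is no reason this equality should be compatible with the homomorphism property for all $a$ at once. What the paper does instead is build, by iterating Proposition \ref{directdecomposition} transfinitely along the (countable, well-ordered) chains of $\setofunions$, a \emph{nested} family of subalgebras $(A_Q)_{Q\in\setofunions}$ with $A=A_Q+Q$, $A_Q\cap Q=I_Q$, and $A_{Q_1}\supset A_{Q_2}$ whenever $Q_1\subset Q_2$. The algebraic-geometry lemma is then used, at each finite stage $w$, to choose the points $\tuple{x}^{(P)}$ so that $x^{(P)}_k=x^{(Q)}_k$ whenever $P\subset Q$ and $a_k\in A_Q$; this is exactly where Lemma \ref{fip_algebraicgeometry2} (openness of projections between the varieties) is needed. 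Condition (b) then falls out without any roof-dependence: given a pseudo-finite sequence $(P_n)$ with union $Q$, write $a=b+x$ with $b\in A_Q$, $x\in Q$; then $x\in P_n$ for large $n$, so $\theta_{P_n}(a)=\theta_{P_n}(b)=\theta_Q(b)$ eventually, and compactness turns ``eventually constant along pseudo-finite sequences'' into ``finitely many values''. You mention Proposition \ref{directdecomposition} once, for a single $Q$; the missing idea is that you need the whole coherent family $(A_Q)$, and that coherence is precisely what makes the finite-stage variety choices compatible.
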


\begin{proof}
Note that $A$ must be non-unital. Since each $P\in\setofunions$ is
a non-modular prime ideal in $A$, it is a prime ideal in $\unitA$.
For each $Q\in\setofunions$, set
\[
    \setofunions_Q=\set{P\in\setofunions:\ P\subset Q},
\]
and set $I_Q=\bigcap \setofunions_Q$. We start the proof with some
lemmas:

\newcommand{\achain}{\mathfrak{C}}

\begin{lemma}
  Let $Q_*\subset Q^*\in\setofunions$. Let $A_*\supset A^*$ be subalgebras of $A$ such that
  $A^*\cap Q^*\subset I_{Q^*}$, and $A_*\cap Q_*= I_{Q_*}$.
  Suppose further that $A_*+Q_*=A$.
  Let $\achain$ be a chain in $\setofunions_{Q^*}$ where each
  ideal in $\achain$ contains $Q_*$.
  Then, for each $Q\in\achain$, we can find a subalgebra $A_Q\subset
  A$, such that the following conditions is satisfied:
  \begin{enumerate}
    \item $A_Q\cap Q=I_Q$ and $A=A_Q+Q$ $(Q\in\achain)$;
    \item for each $Q_1\subset Q_2\in\achain$, we have $A_*\supset A_{Q_1}\supset
    A_{Q_2}\supset A^*$.
  \end{enumerate}
\end{lemma}
\begin{proof}
  Assume toward a contradiction that the lemma fails.
  Let $Q_*, Q^*$, $A_*, A^*$, and $\achain$ be as above and such that the lemma fails and that
  $\order(\achain)$ is smallest. Proposition
  \ref{directdecomposition} implies that $\achain$ must be
  infinite.

  If $\achain$ is order isomorphic to $\omega$ the first infinite
  ordinal; say
  \[
  \achain=\set{Q_1\subset Q_2\subset\cdots}.
  \]
  Then, Proposition \ref{directdecomposition}
  enable us to construct $(A_{Q_n})$ inductively.

  In general, we index $\achain$ increasingly by $\alpha\in\order(\achain)$, so that
  $\achain=(Q_\alpha)$.
  There exists a sequence $(\alpha_n)$ in $\order(\achain)$ converging
  in the order topology to $\gamma=\sup\order(\achain)$.
  If $\gamma\in\order(\achain)$, we can, by Proposition \ref{directdecomposition}
  construct $A_{Q_\gamma}$ first.
  As in the previous paragraph, we can (then) find $A_{Q_{\alpha_n}}$ ($n\in\naturals$)
  satisfying both the conditions (i) and (ii). The $\alpha_n$'s
  divide $\order(\achain)\setminus\set{\gamma}$ into chains which are
  order isomorphic to ordinals strictly
  smaller than $\order(\achain)$. The minimality of
  $\order(\achain)$ then allows us to find $A_Q$ ($Q\in\achain$) satisfying the
  conditions (i) and (ii).

  Thus, in any case, we have a contradiction.
\end{proof}

\begin{lemma}
  For each $Q\in\setofunions$, we can find a subalgebra $A_Q\subset
  A$ such that the following conditions is satisfied:
  \begin{enumerate}
    \item $A_Q\cap Q=I_Q$ and $A=A_Q+Q$ $(Q\in\setofunions)$;
    \item for each $Q_1\subset Q_2\in\setofunions$, we have $A_{Q_1}\supset
    A_{Q_2}$.
  \end{enumerate}
\end{lemma}
\begin{proof}
    This follows from Zorn's lemma, the fact that all prime ideals
    containing a given prime ideal form a chain, and the previous lemma.
\end{proof}

Let $\kappa$ be the set of all tuples of the form
$(\delta;\fsetofunions; a_1,\ldots,a_m)$, where $\delta
>0$, $\fsetofunions$ is a non-empty finite subsets of $\setofunions$,
and $(a_1,\ldots,a_m)$ is a non-empty finite sequence of distinct
elements in $A$. Define a partial order $\prec$ on $\kappa$ by
setting
\[
    (\delta;\fsetofunions; a_1,a_2,\ldots,a_m)
    \prec(\delta';\fsetofunions'; a'_1,a'_2,\ldots,a'_{m'})
\]
if $\delta >\delta'$, $\fsetofunions\subset \fsetofunions'$,
$(a_1,\ldots,a_m)$ is a subsequence of
$(a'_1,a'_2,\ldots,a'_{m'})$. Then $(\kappa,\prec)$ is a net. Fix
an ultrafilter $\U$ on $\kappa$ majorizing this net.

\begin{lemma}
    \label{algebraicconstruction}
    For each $w=(\delta;\fsetofunions; a_1,\ldots,a_m)\in \kappa$.
    Then we can find, for each $P\in\fsetofunions$, a finite sequence of complex numbers
    \[\tau_P(w)=\tuple{x}^{(P)}=(\xiota_1,\ldots, \xiota_m)\]
    satisfying all the following conditions:
    \begin{enumerate}
        \item $p(\xiota_1,\ldots,\xiota_m)=0$ for each $p\in\complexs[X_1,\ldots,X_m]$
            with $p(a_1,\ldots,a_m)\in P$;
        \item for each $1\le k\le m$ with $a_k\notin P$,
            we have $\xiota_k\neq 0$;
        \item $|\xiota_k|\le \delta\quad(1\le k\le m)$.
        \item for each $1\le k\le m$ and each $P\subset Q\in\fsetofunions$
        such that $a_k\in A_Q$, we have $\xiota_k=x^{(Q)}_k$.
    \end{enumerate}
\end{lemma}

\begin{proof}
For each $P\in\fsetofunions$, set
$\tuple{a}^P=(a_1,\ldots,a_m)\cap A_P$ and set $\tuple{X}^P=(X_i:\
a_i\in A_P)$. Without loss of generality, we can assume that none
of these is empty. Note that, when $P\subset Q\in\fsetofunions$
then $\tuple{a}^Q\subset\tuple{a}^P$ and
$\tuple{X}^Q\subset\tuple{X}^P$.

Denote by $\pi_P$ the projection from $\complexs_{\tuple{X}}$ onto
$\complexs_{\tuple{X}^P}$, and $\pi_{PQ}$ the projection from
$\complexs_{\tuple{X}^P}$ onto $\complexs_{\tuple{X}^Q}$
($P\subset Q\in\fsetofunions$). We also conveniently consider
$\complexs_{\tuple{X}^P}$'s as subspaces of
$\complexs_{\tuple{X}}$.

For each $Q\in\fsetofunions$, define
$\widetilde{Q}=\set{p\in\complexs[\tuple{X}] :\, p(\tuple{a})\in
Q}$, and
\[
    \widehat{Q}=\set{p\in\complexs[\tuple{X}^Q] :\  p(\tuple{a}^Q)\in Q}\,.
\]
We see that for $P\subset Q\in \fsetofunions$,
$\widetilde{P}\subset \widetilde{Q}$ are prime ideals in
$\complexs[\tuple{X}]$, and
$\widehat{Q}=\widetilde{Q}\cap\complexs[\tuple{X}^Q]$ is a prime
ideal in $\complexs[\tuple{X}^Q]$. Also, since $A_Q\cap Q=A_Q\cap
P$ for each $P\subset Q\in\fsetofunions$, we see that
\[
    \widehat{Q}=\set{p\in\complexs[\tuple{X}^Q] :\  p(\tuple{a}^Q)\in
    P}=\widetilde{P}\cap\complexs[\tuple{X}^Q];
\]
and thus $\widehat{Q}=\complexs[\tuple{X}^Q]\cap\widehat{P}$.

It follows from Lemma \ref{fip_algebraicgeometry2} that there
exist, for each $Q\in\fsetofunions$, dense open subsets $U_Q$ of
$\variety(\widetilde{Q})$ and $W_Q$ of $\variety(\widehat{Q})$,
respectively, such that:
\begin{itemize}
    \item $\pi_Q:U_Q\to W_Q$ is an open map;
    \item $\pi_{PQ}:W_P\to W_Q$ is an open map ($P\subset
    Q\in\fsetofunions$).
\end{itemize}
(We have only finitely many varieties here.)

For each $Q\in\fsetofunions$, set
\[
    V_{Q}= \bigcup_{1\le r\le m,\ a_r\notin Q}\set{\tuple{x}=(x_1,\ldots,x_m):\ x_r = 0}\,.
\]
Then $V_{Q}$ is a variety which does not contain
$\variety(\widetilde{Q})$, and so
$\variety(\widetilde{Q})\setminus V_{Q}$ is also a dense open
subset of $\variety(\widetilde{Q})$ because
$\variety(\widetilde{Q})$ is irreducible. Therefore,
$U_{Q}\setminus V_{Q}$ is again a dense open subset of
$\variety(\widetilde{Q})$.

Set
\[
    \Delta=\set{\tuple{x}=(x_1,\ldots,x_m):\ |x_r|< \delta\ (1\le r\le m)}\,.
\]
Finally, set $U'_{Q}=(U_{Q}\setminus V_{Q})\cap \Delta$ and
$W'_Q=W_Q\cap \Delta\,$. Note that the origin $\tuple{0}$ is in
$\variety(\widetilde{Q})$ and $\variety(\widehat{Q})$
($Q\in\fsetofunions$). So $U'_Q$ (respectively, $W'_{Q}$) is a
non-empty open subset of $\variety(\widetilde{Q})$ (respectively,
$\variety(\widehat{Q})$).

Choosing any $\tuple{x}^{(Q)}\in U'_Q$ will ensure the conditions
(i), (ii), and (iii) are satisfied.

Fix $P\subset Q\in\fsetofunions$. Since $U_P\setminus V_P$ is
dense in $\variety(\widetilde{P})$ and $U'_Q\subset
\variety(\widetilde{Q})\subset \variety(\widetilde{P})$, we have
$\pi_{Q}(U'_P)\cap \pi_Q(U'_Q)$ is dense (and open) in
$\pi_{Q}(U'_Q)$ (which is open in $W_Q$). Note that
$\pi_{Q}(U'_P)=\pi_{PQ}[\pi_{P}(U'_P)]$. In fact, we see that for
every dense open subset $D$ of $\pi_P(U'_P)$,
$\pi_{PQ}(D)\cap\pi_Q(U'_Q)$ is dense and open in $\pi_Q(U'_Q)$.

Thus, we can define dense open subsets $D_Q$ of $\pi_Q(U'_Q)$ as
follows: For $P$ minimal in $\fsetofunions$, set
$D_P=\pi_P(U'_P)$. Then, define inductively,
\[
    D_Q=\bigcap_{P\in\fsetofunions,\ P\subsetneq Q}\pi_{PQ}(D_P)\cap\pi_Q(U'_Q).
\]

The non-emptiness of $D_P's$ allows us to choose
$\tuple{x}^{(Q)}\in U'_Q$ such that the condition (iv) is
satisfied (inductively, starting from the maximal elements in
$\fsetofunions$; note that the prime ideals containing a given
prime ideal form a chain).
\end{proof}

For each $P\in\setofunions$, define $\xi_P:A\to \complexs^\kappa$
as follows: For each $a\in A$ and each
\[
    w =(\delta;\fsetofunions;a_1,\ldots,a_m)\in\kappa,
\]
if $P\in\fsetofunions$ and if $a$ is in $(a_1,\ldots,a_m)$, say $a=a_k$ (there is at most one
such $k$), then $\xi_P(a)(w)= \tau_P(w)(k)$; otherwise,
$\xi_P(a)(w)=0$. Define $\theta_P(a)$ to be the equivalence class
in $\ultrapower$ containing $\xi_P(a)$.

Let $P\in \setofunions$. By Lemma \ref{algebraicconstruction}(i),
the map $\theta_P$ is an algebra homomorphism and $P\subset\ker
\theta_P$; by combining this with Lemma
\ref{algebraicconstruction}(ii), we see that $\ker \theta_P$ is
exactly $P$. By Lemma \ref{algebraicconstruction}(iii), the image
of $\theta_P$ is contained in $\infinitesimals{\ultrapower}$. By
Lemma \ref{algebraicconstruction}(iv), for each $P\subset
Q\in\setofunions$ and each $a\in A_Q$, we have
$\theta_P(a)=\theta_Q(a)$.

Let $(P_n)$ be any pseudo-finite sequence in $\setofunions$, and
let $Q=\bigcup_{n=1}^\infty P_n$. Let $a\in A$. Write $a=b+x$,
where $b\in A_Q$ and $x\in Q$. Then $x\in P_n$ for $n>n_0$, for
some $n_0$. Therefore, for $n>n_0$,
$\theta_{P_n}(a)=\theta_{P_n}(b)=\theta_Q(b)$. Thus, the sequence
$(\theta_{P_n}(a))$ is eventually constant.

Finally, assume toward a contradiction that the set
$\set{\theta_P(a): P\in\setofunions}$ is infinite for some $a\in
A$. Let $(P_n)$ be a sequence in $\setofunions$ such that all
$\theta_{P_n}(a)$ ($n\in\naturals$) are distinct. By the
compactness of $\setofunions$, $(P_n)$ contains a pseudo-finite
subsequence $(P_{n_i})$. However, the previous paragraph shows
that the set $\set{\theta_{P_{n_i}}(a):i\in\naturals}$ is finite.
This is a contradiction.

This finished the proof of Proposition
\ref{fip_mainalgebraictheorem}.
\end{proof}

\begin{remark}
The idea of the above proof originate from the approach in
\cite{daleswoodin1996} of the theorem \cite{esterle1979} of
Esterle on embedding integral domains into radical Banach
algebras.
\end{remark}

We are now ready to prove our main results.

\begin{theorem}
    \label{main_theorem_radical}
    Let $\Omega$ be a locally compact space.
    \begin{enumerate}
        \item Let $\theta$ be a homomorphism from $\C_0(\Omega)$ into a radical Banach algebra
            $R$. Then $\ker\theta$ is the intersection of a relatively compact
            family of non-modular prime ideals in $\C_0(\Omega)$.
        \item \usingCH Let $I$ be the intersection of a relatively compact family of
            non-modular prime ideals in $\C_0(\Omega)$ such that $I$ is also the intersection
        of a countable family of prime ideals. Suppose that
            \[
                \cardinal{\C_0(\Omega)\big/I}=\continuum\,.
            \]
            Then there exists a homomorphism $\theta$ from $\C_0(\Omega)$ into a radical
            Banach algebra such that $\ker\theta=I$.
    \end{enumerate}
\end{theorem}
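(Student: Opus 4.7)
For part (i), let $\theta : \C_0(\Omega) \to R$ be a homomorphism into a radical Banach algebra $R$. The plan is first to verify that $\ker\theta$ is an abstract continuity ideal. For any sequence $(a_n)$ in $A = \C_0(\Omega)$, the stability lemma \cite[5.2.7]{dales2000} applied to the descending sequence of left-annihilators $\set{r \in R : \theta(a_1\cdots a_n)\,r = 0}$ in $R$ produces $n_0$ after which this sequence stabilizes; pulling back through $\theta$ gives stability of $\quotoel{\ker\theta}{a_1\cdots a_n}$. Corollary \ref{main_theorem_abstract} then writes $\ker\theta$ as the intersection of a relatively compact family $\setofprimes$ of primes of the form $\quotoel{\ker\theta}{f}$. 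To see each such prime is non-modular, suppose $\quotoel{\ker\theta}{f} = M_p$ with $p \in \Omega$ and pick $e \in A$ with $e(p) = 1$; then $ea - a \in M_p = \quotoel{\ker\theta}{f}$ for every $a \in A$, which gives $\theta(f)\theta(e)\theta(a) = \theta(f)\theta(a)$. Since $\theta(e)$ is quasi-nilpotent in the radical $R$, $\unit_R - \theta(e)$ is invertible in $\unitization{R}$, forcing $\theta(f)\theta(a) = 0$ for every $a \in A$, i.e.\ $\quotoel{\ker\theta}{f} = A$, a contradiction.

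For part (ii), the construction proceeds in three stages under CH. First, I apply Proposition \ref{compact} to the given $\setofprimes$ to obtain a compact family $\setofunions$ of non-modular primes with $\bigcap\setofunions = I$, and use the assumption that $I$ is an intersection of countably many primes to verify that every (well-ordered) chain in $\setofunions$ is countable---this is precisely the hypothesis required by Proposition \ref{fip_mainalgebraictheorem}.

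Second, I invoke Proposition \ref{fip_mainalgebraictheorem} to obtain, for each $P \in \setofunions$, a homomorphism $\theta_P : A \to \infinitesimals{\ultrapower}$ with $\ker\theta_P = P$ and with $\set{\theta_P(a) : P \in \setofunions}$ finite for every $a \in A$. Assembling these pointwise yields
\[
    \Theta : A \to \linfty(\setofunions, \infinitesimals{\ultrapower}), \qquad \Theta(a)(P) = \theta_P(a),
\]
which is a well-defined algebra homomorphism (the finite-range property ensures boundedness) with $\ker\Theta = \bigcap_{P \in \setofunions} \ker\theta_P = I$.

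The final and main obstacle is to embed $\Theta(A)$ into a commutative radical Banach algebra. The image $\Theta(A)$ has cardinality at most $\continuum$, and every nonzero element is an infinitesimal and hence quasi-nilpotent. Under CH together with the hypothesis $\cardinal{\C_0(\Omega)/I} = \continuum$, I would construct such an embedding into a universal commutative radical Banach algebra (for instance $\Lone(\reals^+,\omega)$ for a suitable radical weight $\omega$; see \cite[Theorems 5.7.25 and 5.7.28]{dales2000}) via a transfinite recursion of Dales--Esterle--Woodin type, as signalled in the remark following Proposition \ref{fip_mainalgebraictheorem}. Composing with $\Theta$ then yields the desired homomorphism $\theta : \C_0(\Omega) \to R$ with $\ker\theta = I$.
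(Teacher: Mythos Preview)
Your overall architecture is right, but there are two genuine gaps, one in each part.

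\textbf{Part (i).} The stability lemma does not yield what you claim. Applied to the maps $b\mapsto\theta(a_1\cdots a_n b)$ it stabilizes the \emph{separating spaces}, not the kernels; an increasing sequence of closed annihilator ideals in $R$ has no reason to stabilize, and pulling back through $\theta$ does not help. The paper instead observes that for a radical homomorphism one has $\ker\theta=\cont(\theta)$: by Theorem~\ref{fip_bcse}(i) $\theta$ is continuous on $\cont(\theta)$, and a continuous homomorphism from an ideal of $\C_0(\Omega)$ into a radical Banach algebra is zero, so $\cont(\theta)\subset\ker\theta$; the reverse inclusion is trivial. Since $\cont(\theta)$ is always an abstract continuity ideal, so is $\ker\theta$. (Your non-modularity argument also needs a small repair: a modular prime $P$ need not equal $M_p$, only be contained in it. Take instead a modular identity $e$ for $P$, so that $ea-a\in P$ for all $a$; then your invertibility argument with $\unit-\theta(e)$ goes through unchanged.)

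\textbf{Part (ii).} You form the product first and then try to embed $\Theta(A)$ into a radical Banach algebra. This runs into two problems. First, $\infinitesimals{\ultrapower}$ carries no natural norm, so $\linfty(\setofunions,\infinitesimals{\ultrapower})$ is not a Banach algebra and ``boundedness'' has no meaning. Second, and more seriously, $\Theta(A)$ is \emph{not} an integral domain (distinct primes $P_1,P_2\in\setofunions$ produce zero-divisors in the product), so Esterle's embedding theorem \cite{esterle1979} does not apply to it, and the Dales--Esterle--Woodin machinery you invoke is built for integral domains. The paper reverses the order: it first lets $B$ be the subalgebra of the single ultrapower $\infinitesimals{\ultrapower}$ generated by all the images $\theta_P(\C_0(\Omega))$. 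This $B$ \emph{is} an integral domain (the ultrapower is a field), and the finite-range condition in Proposition~\ref{fip_mainalgebraictheorem}(b) together with $\cardinal{\C_0(\Omega)/I}=\continuum$ gives $\cardinal{B}=\continuum$. Now Esterle's theorem embeds $B$ into a universal radical Banach algebra $R_0$ via some $\psi$, and only then does one form the product map $a\mapsto\bigl((\psi\circ\theta_P)(a)\bigr)_P$ into $\linfty(\setofunions,R_0)$; the finite-range property places the image in the radical. Swapping ``embed'' and ``product'' is precisely what makes the argument work.
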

\begin{proof}
    (i) Since $\theta$ maps into a radical algebra, we see that $\quotoel{\ker\theta}{f}$
    is non-modular
    for each $f\in\C_0(\Omega)$. In this case, Theorem \ref{fip_bcse} shows that
    $\ker\theta=\cont(\theta)$, and so, it is an abstract continuity in $\C_0(\Omega)$.
    The result will then follow from Corollary \ref{main_theorem_abstract}(i).

    (ii) Let $\setofprimes$ be a relatively compact family of non-modular
    prime ideals in $\C_0(\Omega)$ such that $I=\bigcap\setofprimes$.
    We can assume that $\setofprimes\neq\emptyset$. By keeping only
    the minimal elements in $\setofprimes$, we can suppose that
    $P\nsubset Q$ ($P\neq Q\in\setofprimes$). Since $I$ is the
    intersection of a countable family of prime ideals, by Lemma
    \ref{nonredundancy}, we see that $\setofprimes$ is countable. Let
    $\setofunions$ be the set of all the ideals that are unions of
    (countably many) prime ideals in $\setofprimes$. It is easy to see
    that every chain in $\setofunions$ is countable.

Let $\theta_P:\C_0(\Omega)\to\infinitesimals{\ultrapower}$
($P\in\setofunions$) be homomorphisms as in Proposition
\ref{fip_mainalgebraictheorem}. Let $B$ be the subalgebra of
$\infinitesimals{\ultrapower}$ generated by all the images of
$\theta_P$ ($P\in \setofunions$). Then $B$ is a non-unital
integral domain. We also have
\[
    \cardinal{B}=\cardinal{\bigcup_{P\in \setofunions} \theta_P(\C_0(\Omega))}
    =\cardinal{\bigcup_{a\in \C_0(\Omega)}\set{\theta_P(a):\ P\in \setofunions}}
    =\continuum\,;
\]
since, for each $b\in a+I$, we have
\[
    \set{\theta_P(b):\ P\in \setofunions}=\set{\theta_P(a):\ P\in
    \setofunions},
\]
which is finite. Thus \cite{esterle1979} there exists an embedding
$\psi : B\to R_0$ where $R_0$ is a universal radical Banach
algebra; for example, $R_0=\Lone(\reals^+,\omega)$ for $\omega$ a
radical weight bounded near the origin.

Consider the following map:
\[
    \theta:\C_0(\Omega)\to\prod_{P\in \setofunions} R_0\,,\ a\mapsto
    \big((\psi\circ\theta_P)(a):\ P\in \setofunions\big)\,.
\]
Then $\theta$ is a homomorphism with kernel $\bigcap
\setofunions=I$. We see, by Proposition
\ref{fip_mainalgebraictheorem}, that the image of $\theta$ is in
$\linfty(\setofunions,R_0)$, and indeed is in its radical $R$.
Thus $\theta :\C_0(\Omega)\to R$ is the required homomorphism.
\end{proof}

\begin{theorem}
    \label{main_theorem}
    Let $\Omega$ be a locally compact space.
    \begin{enumerate}
        \item Let $\theta$ be a homomorphism from $\C_0(\Omega)$ into a Banach algebra
            $B$. Then $\cont(\theta)$ is the intersection of a relatively compact
            family of prime ideals in $\C_0(\Omega)$.
        \item \usingCH Let $I$ be the intersection of a relatively compact family of
            prime ideals in $\C_0(\Omega)$ such that $I$ is also the intersection of a countable
        family of prime ideals. Suppose that
            \[
                \cardinal{\C_0(\Omega)\big/I}=\continuum\,.
            \]
            Then there exists a homomorphism $\theta$ from $\C_0(\Omega)$ into a
            Banach algebra such that $\cont(\theta)=I$.
    \end{enumerate}
\end{theorem}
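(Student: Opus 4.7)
Part~(i) is an immediate consequence of Corollary~\ref{main_theorem_abstract}(i): the continuity ideal $\cont(\theta)$ is an abstract continuity ideal in $\C_0(\Omega)$, hence equals the intersection of the relatively compact family $\{\quotoel{\cont(\theta)}{f}: f\in\C_0(\Omega)\}$ of prime ideals.

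For part~(ii), my plan is to combine the radical construction of Theorem~\ref{main_theorem_radical}(ii) for the non-modular part of the decomposition with an explicit unital extension for each modular prime, assembled via a direct sum of Banach algebras. After passing to minimal elements of $\setofprimes$ and extracting a countable subfamily (using Lemma~\ref{nonredundancy} as in the proof of Theorem~\ref{main_theorem_radical}(ii)), split $\setofprimes=\setofprimes_{nm}\sqcup\setofprimes_{mod}$. By Lemma~\ref{finiteroofs}, the modular primes are supported at finitely many points $p_1,\ldots,p_k\in\Omega$; the primes in $\setofprimes_{mod}$ at each $p_i$ form a chain well-ordered by Proposition~\ref{compact}(iii), with smallest element $P_i^{*}$, so that $I_{mod}:=\bigcap\setofprimes_{mod}=\bigcap_{i=1}^k P_i^{*}$ is a finite intersection.

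Now build the components. Apply Theorem~\ref{main_theorem_radical}(ii) to $\setofprimes_{nm}$ to produce a radical homomorphism $\nu_{nm}\colon\C_0(\Omega)\to R_{nm}$ with $\cont(\nu_{nm})=\ker\nu_{nm}=I_{nm}:=\bigcap\setofprimes_{nm}$. For each~$i$ with $P_i^{*}\subsetneq M_{p_i}$, choose via Esterle's theorem an embedding of the non-unital integral domain $M_{p_i}/P_i^{*}$ into a universal radical Banach algebra $R_i$, and let $\nu_i$ be the composite $\C_0(\Omega)\twoheadrightarrow\C_0(\Omega)/P_i^{*}\cong\unitization{(M_{p_i}/P_i^{*})}\hookrightarrow\unitization{R_i}$. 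Since $\nu_i|_{M_{p_i}}$ is a radical homomorphism from $M_{p_i}\cong\C_0(\Omega\setminus\{p_i\})$ with kernel $P_i^{*}$, the continuity-equals-kernel principle for radical homomorphisms gives $\cont(\nu_i)\cap M_{p_i}=P_i^{*}$; and for $a\notin M_{p_i}$ the element $\nu_i(a)=a(p_i)\cdot 1+r$ with $r\in R_i$ radical is invertible in $\unitization{R_i}$, so $b\mapsto\nu_i(ab)=\nu_i(a)\nu_i(b)$ is continuous iff $\nu_i$ itself is, which it is not; hence $\cont(\nu_i)\subset M_{p_i}$, and thus $\cont(\nu_i)=P_i^{*}$. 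Finally set $\theta=(\nu_{nm},\nu_1,\ldots,\nu_k)\colon\C_0(\Omega)\to R_{nm}\oplus\bigoplus_{i=1}^k\unitization{R_i}$, with componentwise multiplication and maximum norm; the orthogonality of the summands yields $\cont(\theta)=\cont(\nu_{nm})\cap\bigcap_i\cont(\nu_i)=I_{nm}\cap I_{mod}=I$, as required.

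The main obstacle I expect is the edge case where some $P_i^{*}=M_{p_i}$: then $M_{p_i}/P_i^{*}=0$ and the naive $\nu_i$ above collapses to continuous point evaluation at $p_i$, contributing $\C_0(\Omega)$ rather than $M_{p_i}$ to the intersection defining $\cont(\theta)$. To handle this, one argues that $\setofprimes$ can always be chosen so that no $M_{p_i}$ is minimal, by replacing any such $M_{p_i}\in\setofprimes$ with a strictly smaller prime between $I$ and $M_{p_i}$ (which exists unless $I=M_{p_i}$, a case ruled out by direct inspection since $\cont(\theta)=M_{p_i}$ forces continuity of $\theta$ on $\C_0(\Omega)$); a short calculation shows this replacement preserves the intersection~$I$. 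A secondary verification is that $\cardinal{\C_0(\Omega)/I_{nm}}=\continuum$, needed to invoke Theorem~\ref{main_theorem_radical}(ii): the upper bound follows from $I\subset I_{nm}$, while the lower bound comes (under CH) from the presence of at least one non-modular prime of suitable quotient cardinality.
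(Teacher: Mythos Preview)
Your argument for part~(i) is fine and matches the paper's one-line appeal to Corollary~\ref{main_theorem_abstract}(i).

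In part~(ii), however, there is a genuine gap. After passing to the minimal elements of $\setofprimes$ you assert that ``the primes in $\setofprimes_{mod}$ at each $p_i$ form a chain well-ordered by Proposition~\ref{compact}(iii), with smallest element $P_i^{*}$''. Proposition~\ref{compact}(iii) says only that \emph{any} chain in $\setofunions$ is well-ordered; it does not say that the primes supported at a fixed point form a chain, and in fact they need not. Primes in $\C_0(\Omega)$ supported at the same point $p$ are all contained in $M_p$ but can be pairwise incomparable; indeed the constructions of \S\ref{moreproperties}--\S7 (and of \cite{pham2005b}) produce relatively compact families of mutually incomparable primes all supported at a single point. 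Since you have already discarded non-minimal elements, several incomparable minimal primes at one $p_i$ would make your ``chain with smallest element $P_i^{*}$'' impossible. Consequently the reduction $I_{mod}=\bigcap_{i=1}^k P_i^{*}$ to a finite intersection of single primes is unjustified, and the single-prime Esterle embedding you invoke for each $\nu_i$ is not enough.

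The paper's proof fixes exactly this: it groups the modular primes not by support point but by the finitely many \emph{roofs} $Q_1,\ldots,Q_n$ of the closure $\setofunions'$ (Lemma~\ref{finiteroofs}), sets $\setofprimes_k=\{P\in\setofprimes\setminus\setofprimes_0: P\subset Q_k\}$, and then applies the full Theorem~\ref{main_theorem_radical}(ii) to each $\setofprimes_k$, viewed as a relatively compact family of non-modular primes in $M_k$, to obtain a radical homomorphism $\theta_k:M_k\to R_k$ with $\ker\theta_k=\bigcap\setofprimes_k$. The extension of $\theta_k$ to all of $\C_0(\Omega)$ is then done by sending a modular identity $u_k$ for $\bigcap\setofprimes_k$ (produced via Lemma~\ref{divisibility}) to $\unit_{R_k}$. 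Your argument can be repaired along these lines: replace each $P_i^{*}$ by the whole family $\setofprimes_k$, invoke Theorem~\ref{main_theorem_radical}(ii) on $M_k$ rather than Esterle's single-prime embedding, and use Lemma~\ref{divisibility} for the extension step. Once this is done, your final assembly $\theta=(\theta_0,\theta_1,\ldots,\theta_n)$ and the computation $\cont(\theta)=\bigcap_k\cont(\theta_k)=I$ go through as you wrote.
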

\begin{proof}
    (i) The continuity ideal $\cont(\theta)$ is an abstract continuity ideal in $\C_0(\Omega)$.
    The result will follow from Corollary \ref{main_theorem_abstract}(i).

    (ii) Let $\setofprimes$ be a relatively compact family of prime ideals
in $\C_0(\Omega)$ such that $I=\bigcap\setofprimes$. We can assume
that $\setofprimes\neq\emptyset$. As in the previous proof, we can
assume that $\setofprimes$ is countable.

Denote by $\setofprimes_0$ the set of non-modular ideals in
$\setofprimes$. Let $\setofunions'$ be the collection of all the
ideals that are unions of (countably many) prime ideals in
$\setofprimes\setminus\setofprimes_0$. By Lemma \ref{finiteroofs},
$\setofunions'$ has only finitely many roofs. Denote by
$Q_1,\ldots, Q_n$ the roofs of $\setofunions'$, and set
$\setofprimes_i=\set{P\in\setofprimes\setminus\setofprimes_0:\
P\subset Q_i}$.

Let $1\le k\le n$. First, it is easy to see that $Q_k$ is a
modular prime ideal. Pick a modular identity $u$ for $Q_k$, and
pick $a\notin Q_k$. Then $a-au\in Q_k$, and so, by Lemma
\ref{divisibility}, there exists $v\in Q_k$ such that
$a-au-av\in\bigcap\setofprimes_k$. It follows easily that $u+v$ is
a modular identity for $\bigcap\setofprimes_k$; denote it by
$u_k$.

Theorem \ref{main_theorem_radical} shows that there exists a
homomorphism $\theta_0$ from $\C_0(\Omega)$ into a radical Banach
algebra $R_0$ such that $\ker\theta_0=\bigcap\setofprimes_0$.
Similarly, for each $1\le k\le n$, there exists a homomorphism
$\theta_k$ from $M_k$ into $R_k$ such that
$\ker\theta_k=\bigcap\setofprimes_k$; where $M_k$ is the maximal
modular ideal containing $Q_k$. We extend $\theta_k$ to a
homomorphism from $\C_0(\Omega)$ into $\unitization{R}$ by setting
$\theta_k(u_k)=\unit_{R_k}$. It still remains that
$\ker\theta_k=\bigcap\setofprimes_k$.

It follows from the result of Bade and Curtis that
$\cont(\theta_k)=\ker\theta_k$ ($0\le k\le n$). Thus the
homomorphism $\theta:\C_0(\Omega)\to\prod_{k=0}^n
\unitization{R_k}$ defined by
$\theta(a)=(\theta_0(a),\ldots,\theta_n(a))$ satisfies
$\cont(\theta)=\bigcap_{k=0}^n\cont(\theta_k)=\bigcap\setofprimes=I$.
\end{proof}

\begin{remark}
    In Parts (ii) of Theorems \ref{main_theorem_radical} and
    \ref{main_theorem}, we only need that $I$ is the intersection of a relatively compact
    family $\setofprimes$ of prime ideals where every chain in the
    closure of $\setofprimes$ is countable (see Proposition \ref{fip_mainalgebraictheorem}).
\end{remark}

\section{Examples on metrizable locally compact spaces}

For examples of pseudo-finite sequence of prime ideals in
$\C_0(\Omega)$ for $\Omega$ metrizable, see \cite{pham2005b}.
Obviously, unions of finitely many pseudo-finite families are
relatively compact. In this section, we shall construct relatively
compact families of prime ideals that are not unions of finitely
many pseudo-finite families.

Let $\kappa$ be a well-ordered set. Set
$\orderlevel{\kappa}{0}=\kappa$. For each $n\in\naturals$, define
inductively $\orderlevel{\kappa}{n}$ as the set of limiting
elements in $\orderlevel{\kappa}{n-1}$. \emph{We shall only
consider those $\kappa$ for which
$\orderlevel{\kappa}{n}=\emptyset$ for some $n\in\naturals$}. This
condition force $\kappa$ to be countable. Lets call the largest
integer $d$ for which $\orderlevel{\kappa}{d}\neq\emptyset$ the
\emph{depth} of $\kappa$. \emph{For simplicity, we also suppose
that $\kappa$ has the largest element, $\max\kappa$, and that
$\orderlevel{\kappa}{d}=\set{\max\kappa}$.} Otherwise, we can
always replace $\kappa$ by a bigger well-ordered set.

For each $\alpha\in\kappa$ define $l(\alpha)$ to be the largest
integer $l$ for which $\alpha\in\orderlevel{\kappa}{l}$. We define
a partial order $\prec$ on $\kappa$ as follows: For each
$\alpha,\beta\in\kappa$, we write $\alpha\prec\beta$ if $\beta$ is
the smallest element in $\kappa$ with the properties that
$\beta\ge \alpha$ and that $l(\beta)=l(\alpha)+1$. We define
another partial order $\ll$ on $\kappa$ as follows: For each
$\alpha,\beta\in\kappa$, we write $\alpha\ll\beta$ if there exists
a finite sequence
$\alpha=\gamma_1\prec\gamma_2\prec\ldots\prec\gamma_n=\beta$.
\emph{Note that} if $\beta\ll\alpha$ and $\beta\le\gamma\le\alpha$
then $\gamma\ll\alpha$.

\begin{lemma}
    \label{idealsconstruction}
    Let $A$ be a commutative algebra and $Q$ be an ideal which either is prime
    in $A$ or is $A$ itself. Suppose that we have
    $(f_\alpha:\alpha\in\kappa)\subset Q$ and a semiprime ideal
    $I\subset Q$ such that
    \begin{enumerate}
        \item $f_\alpha\notin I$ and $\quotoel{I}{f_\alpha}\subset Q$;
        \item $f_\alpha f_\beta\in I$ whenever both $\alpha\not\ll\beta$
        and $\beta\not\ll\alpha$;
        \item if $gf_\alpha\in I$ then $gf_\beta\in I$ for all
        $\beta\ll\alpha$;
        \item if $l(\beta_0)=0$, $\beta_0\ll\alpha$, $\beta_0\neq\alpha$ and $gf_{\beta_0}\in I$ then
        there exists $\beta_1\ll\alpha$ such that $l(\beta_1)=0$ and
        that $gf_\beta\in I$ for all $\beta_1\le\beta\le\alpha$ with $l(\beta)=0$.
    \end{enumerate}
    Then there exist prime ideals $(P_\alpha:\alpha\in\kappa)$
    satisfying that:
    \begin{itemize}
        \item[(a)] $f_\alpha\notin P_\alpha$ and $\quotoel{I}{f_\alpha}\subset P_\alpha\subset Q$;
        \item[(b)] $P_\alpha=\bigcup_{\beta\ll\alpha,\beta\neq\alpha} P_\beta$;
        \item[(c)] if $g\in P_{\alpha}$ then there exists $\beta_1\ll\alpha$
      such that $l(\beta_1)=0$ and that $g\in P_\beta$ for all
      $\beta_1\le\beta\le\alpha$ with $l(\beta)=0$.
    \end{itemize}
\end{lemma}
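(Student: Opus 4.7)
The plan is to construct $(P_\alpha)$ by transfinite induction on the well-order of $\kappa$, which is compatible with $\ll$ since $\beta\ll\alpha$ with $\beta\ne\alpha$ forces $\beta<\alpha$. The engine of the argument is a consequence of (i)--(iii) together with the semiprimeness of $I$: \emph{for any $\ll$-chain $C\subset\kappa$ and any $\alpha\in C$, the multiplicative closure of $\{f_\gamma:\gamma\in C\}$ is disjoint from $\quotoel{I}{f_\alpha}$}. Indeed, any relation $f_{\gamma_1}\cdots f_{\gamma_n}f_\alpha\in I$ along such a chain, by iterating (iii) from the top of the chain downward and using the semiprime implication $(ab)^k\in I\Rightarrow ab\in I$, collapses to a power of the bottom element lying in $I$, contradicting (i).

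For $\alpha$ with $l(\alpha)=0$, the set $C_\alpha=\{\alpha'\in\kappa:\alpha\ll\alpha'\}$ is itself a $\ll$-chain since each element has a unique $\prec$-successor; let $F_\alpha$ be the multiplicative closure of $\{f_{\alpha'}:\alpha'\in C_\alpha\}$. Apply Zorn's lemma to the ideals $J$ with $\quotoel{I}{f_\alpha}\subset J\subset Q$ and $J\cap F_\alpha=\emptyset$ to obtain a maximal such $P_\alpha$, which is prime by the standard avoidance argument adapted to the $\subset Q$ constraint (using that $Q$ is itself prime or equals $A$). This gives $f_\alpha\notin P_\alpha$ and, more generally, $f_{\alpha'}\notin P_\alpha$ for every $\alpha'\gg\alpha$. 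For $\alpha$ with $l(\alpha)>0$, set $P_\alpha=\bigcup\{P_\beta:\beta\ll\alpha,\beta\ne\alpha\}$ as demanded by (b). The heart of this case is upward-directedness of the family on the right, which is where condition (iv) enters: it supplies, for any $\beta_1,\beta_2\ll\alpha$ at level $0$, a common tail of $\beta$ at level $0$ below $\alpha$ on which $\quotoel{I}{f_{\beta_1}}\cup\quotoel{I}{f_{\beta_2}}\subset\quotoel{I}{f_\beta}$, and a coordinated choice of base-level Zorn maxima transfers this to $P_{\beta_1}\cup P_{\beta_2}\subset P_\beta$ on the same tail. Directedness in hand, $P_\alpha$ is a prime ideal, $\quotoel{I}{f_\alpha}\subset P_\alpha$ by (iii), and $f_\alpha\notin P_\alpha$ because $f_\alpha\in F_\beta$ for every $\beta\ll\alpha$, $\beta\ne\alpha$.

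Conclusions (a) and (b) are now immediate, while (c) follows by the same tail-compatibility: any $g\in P_\alpha$ lies in some $P_{\beta_*}$ at level $0$ (or reduces to such via the induction hypothesis if $l(\beta_*)>0$), and the coordinated Zorn construction gives $g\in P_\beta$ for every $\beta$ in an appropriate level-$0$ tail under $\alpha$. The main obstacle will be precisely this coordination across the base level: a naive independent Zorn selection at each $\beta$ does not transfer the tail inclusions among the $\quotoel{I}{f_\beta}$ to the $P_\beta$'s. One likely has to run a single transfinite recursion in which each new level-$0$ prime $P_\beta$ is chosen compatibly with the primes already constructed at smaller indices sharing a common higher-level ancestor --- perhaps by fixing in advance an ambient prime for each higher-level $\alpha$ and forcing all descendant $P_\beta$'s to lie inside it. The secondary difficulty is the adapted primeness argument at level $0$: the usual ``maximal ideal missing a multiplicative set is prime'' argument must be modified so that auxiliary enlarged ideals do not leave $Q$.
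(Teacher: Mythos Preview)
Your plan correctly isolates the crux --- the \emph{coordination problem} --- but it does not actually solve it, and the fix you sketch (``fix an ambient prime for each higher-level $\alpha$ and force descendants inside it'') is not enough as stated. The difficulty is not merely to get $P_\beta\subset P_\alpha$ for $\beta\ll\alpha$; condition~(b) demands the \emph{equality} $P_\alpha=\bigcup_{\beta\ll\alpha,\beta\ne\alpha}P_\beta$, and condition~(c) demands a specific tail behaviour. If you choose an ambient prime $P_\alpha$ first and then independently pick maximal $P_\beta\subset P_\alpha$ avoiding $F_\beta$, there is no reason the union of the $P_\beta$'s should exhaust $P_\alpha$, nor that the tail property~(c) should hold --- those facts do not follow from (iv) applied to the original $I$, because passing from $\quotoel{I}{f_\beta}$ to an arbitrary Zorn-maximal $P_\beta$ destroys the information (iv) encodes.

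The paper's proof resolves this with a different architecture: induction on the \emph{depth} of $\kappa$, working top-down. The key idea you are missing is that, before doing anything else, one replaces $I$ by a semiprime ideal $J\supset I$ that is \emph{maximal} among semiprime ideals still satisfying (i)--(iv). This maximality buys exactly the coordination you need: one shows (via several short claims exploiting maximality of $J$) that $P:=\bigcup_{\alpha\in\kappa}\quotoel{J}{f_\alpha}$ is itself prime, and one sets $P_{\max\kappa}=P$. Then $\kappa\setminus\{\max\kappa\}$ breaks into blocks $\kappa_n$ of strictly smaller depth, and one recurses on each $\kappa_n$ with $J$ in place of $I$ and $P$ in place of $Q$. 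Because $\quotoel{J}{f_\beta}\subset P_\beta\subset P$ for every $\beta$ produced by the recursion, the union over all $\beta$ is sandwiched and hence equals $P$, giving~(b) at the top; condition~(c) at the top follows from~(iv) for $J$ together with $\quotoel{J}{f_\beta}\subset P_\beta$. Your bottom-up scheme, by contrast, never enlarges $I$, so you have no mechanism forcing $\bigcup_\beta P_\beta$ to be prime or to satisfy~(c). The $J$-maximization step is the missing engine.
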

\begin{proof} We prove by induction on the depth $d$ of $\kappa$.

When $d=0$, $\kappa=\set{0}$. The conditions (i)-(iv) reduce to
$I$ being semiprime, $f_0\notin I$ and $\quotoel{I}{f_0}\subset Q$.
It follows that $I\cap\set{f_0^k, f_0^kf:\ k\ge 1, f\in A\setminus
Q}=\emptyset$. Therefore, we can find a prime ideal $P_0$ such
that $I_0\subset P_0\subset Q$ and $f_0\notin P_0$. We see that
$P_0$ is the required prime ideal.

Now, suppose that the result hold for all the depth less than $d$.
By Zorn's lemma, we can choose a semiprime ideal $J$ containing
$I$ such that $J$ is maximal with respect to conditions (i)-(iv).

\emph{Claim 1: If $f\notin Q$ then $\quotoel{J}{f}=J$.} Indeed, it
is easy to see that $\quotoel{J}{f}$ is semiprime and satisfies
conditions (i)-(iv). So the maximality of $J$ implies
$\quotoel{J}{f}=J$.

\emph{Claim 2: If $f\notin J$ then $\quotoel{J}{f}\subset Q$.} For
otherwise, there would exist $g\in\quotoel{J}{f}\setminus Q$, and
so $f\in\quotoel{J}{g}=J$, by Claim 1; a contradiction.

Set $P=\bigcup_{\alpha\in\kappa} \quotoel{J}{f_\alpha}$. Then
$P\subset Q$. Condition (iii) implies that
\[
    P=\bigcup_{\alpha\in\kappa,\ l(\alpha)=0} \quotoel{J}{f_\alpha}
\]
and condition (iv) implies that $P$ is an ideal.

\emph{Claim 3: If $f\notin P$ then $\quotoel{J}{f}=J$.} Indeed, it
is easy to see that $\quotoel{J}{f}$ is semiprime and satisfies
conditions (i)-(iv) (the less obvious one is (i), however, since
$f\notin P$, $ff_\alpha\notin J$, and so $f_\alpha\notin
\quotoel{J}{f}$ and
$\quotoel{(\quotoel{J}{f})}{f_\alpha}=\quotoel{J}{ff_\alpha}\subset
Q$ by Claim 2). So the maximality of $J$ implies
$\quotoel{J}{f}=J$.

\emph{Claim 4: $P$ is either prime in $A$ or $A$ itself.} Indeed,
if $f,g\notin P$, then, by Claim 3,
\[
    g\notin \bigcup_{\alpha\in\kappa}
    \quotoel{J}{f_\alpha}=\bigcup_{\alpha\in\kappa}
    \quotoel{(\quotoel{J}{f})}{f_\alpha}=\quotoel{P}{f}.
\]
Thus $fg\notin P$.

Let $\alpha_1<\alpha_2<\ldots$ be the non-limiting elements in
$\orderlevel{\kappa}{d-1}$; their limit is $\max\kappa$. Set
$\kappa_1=\set{\alpha\in\kappa:\ \alpha\le\alpha_1}$, and, for
each $n\ge 2$, set $\kappa_n=\set{\alpha\in\kappa:\
\alpha_{n-1}<\alpha\le\alpha_{n}}$. Each $\kappa_n$ has depth
$d-1$, and
$\kappa=\bigcup_{n=1}^\infty\kappa_n\cup\set{\max\kappa}$.

For each $n\in\naturals$, we see that $(f_\alpha:\
\alpha\in\kappa_n)$, $J$, and $P$ satisfy $(f_\alpha:\
\alpha\in\kappa_n)\subset P$, $J\subset P$, and conditions
(i)-(iv) (with $\kappa_n$ replacing $\kappa$, $J$ replacing $I$,
and $P$ replacing $Q$). So, by induction, there exist prime ideals
$P_{\alpha}$ ($\alpha\in\kappa_n$) satisfying the conditions
(a)-(c) (with obvious modification). Set $P_{\max\kappa}=P$.

Note that if $\beta\ll\alpha<\max\kappa$ then both $\alpha$ and
$\beta$ belong to the same $\kappa_n$ for some $n\in\naturals$. We
see that the combined sequence $(P_\alpha:\ \alpha\in\kappa)$
obviously satisfies the conditions (a)-(c) (with $J$ replacing
$I$); the only one need to really check is condition (c) when
$\alpha=\max\kappa$, however, this case follows from the facts
that $\quotoel{J}{\beta}\subset P_\beta\subset P_{\max\kappa}$,
that
\[
    P_{\max\kappa}=\bigcup_{\beta\in\kappa,\ l(\beta)=0}\quotoel{J}{f_\beta},
\]
and that $J$ satisfies condition (iv). \end{proof}

Now, let $\Omega$ be a metrizable locally compact space. We define
a non-increasing sequence
$\left(\level{\onepointcompactification{\Omega}}{n}:n\in\integers^+\right)$
of compact subsets of $\onepointcompactification{\Omega}$ as
follows:
\begin{enumerate}
    \item put $\level{\onepointcompactification{\Omega}}{0}=\onepointcompactification{\Omega}$;
    \item for each $n\in\integers^+$, define
    $\level{\onepointcompactification{\Omega}}{n+1}$ to be the set of all limit points of
    $\level{\onepointcompactification{\Omega}}{n}$.
\end{enumerate}
Define $\level{\onepointcompactification{\Omega}}{\infty}
=\bigcap\set{\level{\onepointcompactification{\Omega}}{n}:\,
n\in\integers^+}$. By the compactness, either
$\level{\onepointcompactification{\Omega}}{\infty}$ is non-empty
or $\level{\onepointcompactification{\Omega}}{l}$ is empty for
some $l\in\integers^+$.

In what follows, the hypothesis that
$p\in\level{\onepointcompactification{\Omega}}{\infty}$ is
necessary, because of \cite[Proposition 8.7]{pham2005b}.

\begin{theorem}
    \label{examples}
    Let $\Omega$ be a metrizable locally compact space, and
    let $p\in\level{\onepointcompactification{\Omega}}{\infty}$. Let $\kappa$ be a
    well-ordered set as above. Then, there exist a sequence of prime
    ideals $(P_\alpha:\ \alpha\in\kappa)$ in
    $\C_0({\Omega})$, where each ideal is
    supported at $p$, and a sequence of functions
    $(f_\alpha:\ \alpha\in\kappa)$ in $\C_0({\Omega})$ satisfying the following:
    \begin{itemize}
        \item[(a)] $f_\alpha\notin P_\alpha$ and $f_\beta\in P_\alpha$ whenever
            both $\beta\not\ll\alpha$ and $\alpha\not\ll\beta$;
        \item[(b)] $P_\alpha=\bigcup_{\beta\ll\alpha,\beta\neq\alpha} P_\beta$;
        \item[(c)] if $g\in P_{\alpha}$ then there exists $\beta_1\ll\alpha$ such that
      $l(\beta_1)=0$ and that $g\in P_\beta$ for all $\beta_1\le\beta\le\alpha$
      with $l(\beta)=0$.
    \end{itemize}
\end{theorem}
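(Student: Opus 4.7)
The plan is to invoke Lemma \ref{idealsconstruction} with $A = \C_0(\Omega)$ and $Q = M_p$. If one exhibits a semiprime ideal $I$ containing $J_p$ and a family $(f_\alpha)_{\alpha \in \kappa}$ in $M_p$ satisfying hypotheses (i)--(iv) of that lemma, then the lemma produces primes $P_\alpha$ with $J_p \subset I \subset P_\alpha \subset M_p$, so each $P_\alpha$ is supported at $p$. Theorem conclusions (b) and (c) coincide with (b), (c) of the lemma, while conclusion (a) follows from Lemma (a) combined with (ii): for $\alpha$, $\beta$ incomparable under $\ll$, hypothesis (ii) gives $f_\alpha f_\beta \in I \subset P_\alpha$, so $f_\beta \in \quotoel{I}{f_\alpha} \subset P_\alpha$.

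The first main step is the \emph{geometric tree construction}. Using $p \in \level{\onepointcompactification{\Omega}}{\infty}$ and the metrizability of $\onepointcompactification{\Omega}$, I would inductively attach to each $\alpha \in \kappa$ a point $q_\alpha \in \level{\onepointcompactification{\Omega}}{l(\alpha)}$, setting $q_{\max\kappa} = p$, and, given $q_\beta$ of level $l(\beta) \ge 1$, choosing $(q_\alpha)_{\alpha \prec \beta}$ to be a sequence in $\level{\onepointcompactification{\Omega}}{l(\beta) - 1}$ converging to $q_\beta$, enumerated compatibly with the well-ordering of $\kappa$. The base-level $q_\alpha$'s lie in $\Omega$; they can be arranged to be distinct and to admit pairwise disjoint compact neighborhoods $K_\alpha \subset \Omega$ shrinking down each branch of the tree rapidly enough that the set $E := \{p\} \cup \bigcup_{l(\alpha) = 0} K_\alpha$ is closed in $\onepointcompactification{\Omega}$.

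The second step is the \emph{choice of functions and ideal}. For base-level $\alpha$, pick a bump $f_\alpha \in \C_c(\Omega)$ with $f_\alpha(q_\alpha) = 1$ and $\mathrm{supp}(f_\alpha) \subset K_\alpha$. For $l(\alpha) \ge 1$, set $f_\alpha = \sum_{\gamma \ll \alpha,\, l(\gamma) = 0} c_\gamma^{(\alpha)} f_\gamma$ with positive weights decaying fast enough to give convergence in $\C_0(\Omega)$; only countably many $\gamma$'s appear. For $I$, I would take a $z$-ideal refining $J_p$, adapted to the cofinal-tail structure of the well-ordering so that $gf_{\beta_0} \in I$ for some early base-level $\beta_0 \ll \alpha$ forces $gf_\beta \in I$ along a cofinal tail of base-level $\beta \ll \alpha$; concretely, $I$ is cut out by a closed subset of $\onepointcompactification{\Omega}$ built from the $q_\alpha$'s together with the iterated limits dictated by $\prec$.

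Conditions (i)--(iii) of Lemma \ref{idealsconstruction} will then be routine from the support structure: (i) since $\quotoel{I}{f_\alpha} \subset M_p$ (any $g \notin M_p$ is nonzero at $p$ whereas $I \supset J_p$ annihilates near $p$), (ii) for base-level incomparable $\alpha$, $\beta$ from disjoint supports and for higher levels from the tree-ancestry observation that a base-level descendant of $\alpha$ is incomparable with any $\beta$ incomparable with $\alpha$, and (iii) from $f_\gamma$ appearing as an explicit summand of $f_\alpha$ when $\gamma \ll \alpha$ is base-level. The main obstacle is condition (iv): this is the step that genuinely uses the hypothesis $p \in \level{\onepointcompactification{\Omega}}{\infty}$. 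One must design the well-ordering of $\kappa$, the tree $\prec$, the enumeration of accumulation points, and the ideal $I$ simultaneously so that the iterated-limit structure forces the propagation of early vanishing to a cofinal tail; the delicate point is handling limit ordinals of $\kappa$ below each $\alpha$ with $l(\alpha) \ge 1$, where the well-ordering and the geometric convergence must be aligned carefully.
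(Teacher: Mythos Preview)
Your reduction to Lemma \ref{idealsconstruction} with $Q=M_p$ is exactly the paper's strategy, but the concrete construction you sketch does not work.

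The immediate obstruction is that your base-level $f_\alpha\in\C_c(\Omega)$ are supported in $K_\alpha$, which avoids a neighbourhood of $p$; hence $f_\alpha\in J_p$, and since you impose $J_p\subset I$ this already violates hypothesis (i). (The inclusion $J_p\subset I$ is in fact unnecessary: once the lemma yields $P_\alpha\subset M_p$, support at $p$ is automatic because a prime in $\C_0(\Omega)$ lies below a unique maximal ideal. But your justification of $\quotoel{I}{f_\alpha}\subset M_p$ also relies on it and is wrong: for $g\notin M_p$ one still has $gf_\alpha\in J_p$ whenever $f_\alpha$ vanishes near $p$, so $gf_\alpha\in I$ rather than $gf_\alpha\notin I$.) More fundamentally, disjoint-support bumps localise too strongly for (iv): the statement $gf_{\beta_0}\in I$ constrains $g$ only on $K_{\beta_0}$, which is disjoint from every other $K_\beta$, so no cofinal-tail propagation is forced by any $z$-ideal built from these sets. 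Your description of $I$ (``cut out by a closed subset'') remains vague exactly where the argument needs a precise combinatorial definition.

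The paper takes a different route. Rather than working directly in $\Omega$, it first builds a model compact space $\Xi\subset(\onepointcompactification{\naturals})^{\kappa_0}$, where $\kappa_0=\{\beta\in\kappa: l(\beta)=0\}$, with membership governed by a weight function $w:\kappa_0\to\naturals$. The functions $f_\alpha$ are chosen with zero sets equal to the coordinate ``hyperplanes'' $Z_\alpha=\{(j_\beta): j_\alpha=\infty\}$ (and intersections thereof for higher levels)---not bumps---and $I$ is the $z$-ideal attached to the $z$-filter $\F$ generated by all $Z_\alpha\cup Z_\beta$ with $\alpha,\beta$ incomparable under $\ll$. It is this explicit filter, together with the bound provided by $w$, that turns the verification of (iv) into a finite combinatorial computation. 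Only at the end is $\Xi$ embedded into $\onepointcompactification{\Omega}$ sending $\tuple{\infty}$ to $p$, which is where $p\in\level{\onepointcompactification{\Omega}}{\infty}$ is used.
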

\begin{proof}
Set $\kappa_0=\set{\beta\in\kappa:\ l(\beta)=0}$. For each
$\beta\in\kappa\setminus\set{\max\kappa}$, there exists a unique
$\alpha\in\kappa$ such that $\beta\prec\alpha$; the set
$\set{\gamma:\ \gamma\prec\alpha}$ is order isomorphic to
$\naturals$, and so we can define $t(\beta)$ to be the natural
number corresponding to $\beta$. For each $\beta\in\kappa_0$, there
exists a unique sequence $(\alpha_1,\ldots,\alpha_{d-1})\in\kappa$
such that
\[
\beta\prec\alpha_1\prec\cdots\prec\alpha_{d-1}\prec\max\kappa;
\]
set
$w(\beta)=\max\set{t(\beta),t(\alpha_1),\ldots,t(\alpha_{d-1})}$.
Note that, for each $k\in\naturals$,
\[
 \cardinal{\set{\alpha\in\kappa_0:\ w(\alpha)\le k}}=k^d.
\]

Adjoin $\infty$ to $\naturals$ to obtain its one-point
compactification $\onepointcompactification{\naturals}$. Define
$\Xi$ to be the subset of the product space
$(\onepointcompactification{\naturals})^{\kappa_0}$ consisting of
all elements $(n_\alpha:\ \alpha\in\kappa_0)$  with the properties
that there exists a finite set $F\subset\kappa_0$ such that
$n_\alpha=\infty$ ($\alpha\in\kappa_0\setminus F$) and such that
$n_\alpha\ge \max\set{w(\beta): \beta\in F}$
($\alpha\in\kappa_0$). It is easy to see that $\Xi$ is a closed
subset of $(\onepointcompactification{\naturals})^{\kappa_0}$.
Since $\kappa$ must be countable, the space $\Xi$ can be embedded
into $\onepointcompactification{\Omega}$ such that
$\tuple{\infty}=(\infty,\infty,\ldots)$ is mapped into $p$
(similar to the proofs of \cite[Lemmas 9.1 and 9.2]{pham2005b}).
Thus we only need to find a system of prime ideals $(P_\alpha:\
\alpha\in\kappa)$ and functions $(f_\alpha:\ \alpha\in\kappa)$
satisfying condition (a)-(c) in $\C(\Xi)$ such that all $P_\alpha$
are supported at $\tuple{\infty}$.

For each $\alpha\in\kappa_0$, define
\[
    Z_\alpha=\set{(j_\beta)_{\beta\in\kappa_0}\in\Xi:\
    j_\alpha=\infty},
\]
and for $\alpha\in\kappa\setminus\kappa_0$, define
\[
    Z_\alpha=\bigcap_{\beta\ll\alpha,\ l(\beta)=0}Z_\beta.
\]
Then choose $f_\alpha\in\C(\Xi)$ such that
$Z_\alpha=\zero(f_\alpha)$. Let $\F$ to be the $z$-filter
generated by all $Z_\alpha\cup Z_\beta$ ($\alpha,\beta\in\kappa,\,
\alpha\not\ll\beta$ and $\beta\not\ll\alpha$). Then define
$I=\zero^{-1}[\F]$. Obviously $I$ is a semiprime ideal, $I\subset
M_{\tuple{\infty}}$, and $(f_\alpha:\ \alpha\in\kappa)\subset
M_{\tuple{\infty}}$.

We shall prove that $(f_\alpha)$, $I$, and $M_{\tuple{\infty}}$
satisfy conditions (i)-(iv) of Lemma \ref{idealsconstruction}. The
result then follows by applying that lemma.

First, for each $\gamma\in\kappa$, $f\in\quotoel{I}{f_\gamma}$ if
and only if
\[
    \zero(f)\cup Z_\gamma\supset \bigcap_{k=1}^n \left(Z_{\alpha_k}\cup
    Z_{\beta_k}\right),
\]
where, for each $k$, $\alpha_k\not\ll\beta_k$ and
$\beta_k\not\ll\alpha_k$. We see that, for each $k$, one of the
following three cases must happen: (1) $\alpha_k\not\ll\gamma$ and
$\gamma\not\ll\alpha_k$, (2) $\gamma\not\ll\beta_k$ and
$\beta_k\not\ll\gamma$, (3) $\alpha_k\ll\gamma$,
$\beta_k\ll\gamma$, $\alpha_k\not\ll\beta_k$ and
$\beta_k\not\ll\alpha_k$. Thus, we see that
\[
    \zero(f)\cup Z_\gamma\supset \bigcap_{i=1}^r Z_{\varrho_i}\cap
    \bigcap_{j=1}^s \left(Z_{\sigma_j}\cup Z_{\varsigma_j}\right),
\]
where, for each $i$, $\varrho_i\not\ll\gamma$ and
$\gamma\not\ll\varrho_i$, and for each $j$, $\sigma_j\ll\gamma$,
$\varsigma_j\ll\gamma$, $\sigma_j\not\ll\varsigma_j$ and
$\varsigma_j\not\ll\sigma_j$. It follows that Lemma \ref{idealsconstruction}(i) holds:
$f_\gamma\notin I$ and $\quotoel{I}{f_\gamma}\subset
M_{\tuple{\infty}}$ ($\gamma\in\kappa$). It is easily seen that conditions
(ii) and (iii) of Lemma
\ref{idealsconstruction} are satisfied by the
definitions of $I$ and $Z_\alpha$'s.

Now, let $\beta_0,\alpha\in\kappa$ and let $g\in\C(\Xi)$ be such
that $l(\beta_0)=0$, $\beta_0\ll\alpha$, $\beta_0\neq\alpha$ and
$gf_{\beta_0}\in I$ then, from the previous discussion, noting
that $l(\beta_0)=0$, we have
\[
    \zero(g)\cup Z_{\beta_0}\supset \bigcap_{i=1}^r Z_{\varrho_i};
\]
where, for each $i$, $\beta_0\not\ll\varrho_i$ (and so
$\alpha\not\ll\varrho_i$). This implies that
\[
    \zero(g)\supset \bigcap_{i=1}^r Z_{\varrho_i}\cap
    \bigcap_{l(\gamma)=0,\, w(\gamma)\le w(\beta_0)} Z_\gamma.
\]
Without loss of generality, we assume that $\varrho_i\ll\alpha$
($1\le i\le k$) and $\varrho_i\not\ll\alpha$ ($k<i\le r$). Choose
$\beta_1$ to be the smallest element in $\kappa$ with respect to
the properties that $l(\beta_1)=0$, $\beta_1<\alpha$,
$\beta_1>\max\set{\varrho_i: 1\le i\le k}$, and that
\[
    \beta_1>\max\set{\gamma\in\kappa_0:\ \gamma\ll\alpha\ \textrm{and}\ w(\gamma)\le w(\beta_0)}
\]
(it is easy to see that there exists such an element). It follows
that $\beta_1\ll\alpha$. Let $\beta\in\kappa_0$ be such that
$\beta_1\le\beta\le\alpha$. We see that $\beta\ll\alpha$, and so
$\varrho_i\not\ll\beta$ (otherwise, $\varrho_i=\beta\ll\alpha$),
and also $\beta\not\ll\varrho_i$ (otherwise, either
$\varrho_i\ll\alpha$ or $\alpha\ll\varrho_i$) for $1\le i\le r$.
Obviously, $\beta\not\ll\gamma$ and $\gamma\not\ll\beta$ for each
$\gamma\in\kappa_0$ with $w(\gamma)\le w(\beta_0)$. Thus
\[
    \zero(g)\cup Z_\beta\supset \bigcap_{i=1}^r \left(Z_{\varrho_i}\cup Z_\beta\right)
    \cap\bigcap_{l(\gamma)=0,\, w(\gamma)\le w(\beta_0)}  \left(Z_{\gamma}\cup Z_\beta\right)\in \F;
\]
and so $gf_\beta\in I$. Hence, Lemma \ref{idealsconstruction} (iv)
holds. This finishes the proof.
\end{proof}

\begin{remark}
Since the cardinality of $\C(\Xi)$ is $\continuum$, we see that
\[
    \cardinal{\C_0(\Omega)\bigg/\bigcap_{\alpha\in\kappa} P_\alpha}=\continuum.
\]
\end{remark}

\begin{proposition}
    The family of prime ideals $\set{P_\alpha:\ \alpha\in\kappa}$
    constructed in Theorem \ref{examples} is compact. If moreover
    the depth $d$ of $\kappa$ is bigger than $1$, then
    $\bigcap_{\alpha\in\kappa} P_\alpha$ is not the intersection
    of any family that can be decomposed into only finitely many pseudo-finite subfamilies of prime
    ideals.
\end{proposition}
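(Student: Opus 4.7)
The compactness of $\setofprimes=\{P_\alpha:\alpha\in\kappa\}$ will be proved by induction on the depth $d$ of $\kappa$. The base case $d=0$ is immediate, since $\setofprimes=\{P_0\}$. For the inductive step, let $(P_{\alpha_n})\subset\setofprimes$. The level-$(d-1)$ ancestors $\alpha_n^{(d-1)}$ lie in $\{\xi_1\prec\xi_2\prec\cdots\}\cup\{\max\kappa\}$, where the $\xi_i$ enumerate the non-limiting level-$(d-1)$ elements (see the proof of Theorem~\ref{examples}) and $\kappa\setminus\{\max\kappa\}=\bigsqcup_i\kappa_i$ with each $\kappa_i$ of depth $d-1$. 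After passing to a subsequence I am in one of three cases: $\alpha_n\equiv\max\kappa$ constantly (trivially pseudo-finite with union $P_{\max\kappa}$); all $\alpha_n\ll\xi_{i^*}$ for a fixed $i^*$, where the inductive hypothesis applied inside $\kappa_{i^*}$ produces a pseudo-finite subsequence with union in $\setofprimes$; or $\alpha_n^{(d-1)}=\xi_{i_n}$ with $i_n$ strictly increasing.

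In the third case I claim the subsequence is already pseudo-finite with union $P_{\max\kappa}$: given $f\in P_{\max\kappa}$, property~(c) yields a leaf $\gamma_1\ll\max\kappa$ such that $f\in P_\gamma$ for every leaf $\gamma\in[\gamma_1,\max\kappa]$. Once $n$ is large enough that all of $\kappa_{i_n}$ lies beyond $\gamma_1$ in the well-order, any leaf descendant of $\alpha_n$ (which exists and lies in $\kappa_{i_n}$) witnesses $f\in P_{\alpha_n}$ either directly (if $\alpha_n$ is a leaf) or via property~(b). The second half of compactness -- that $\setofprimes$ contains the union of each of its pseudo-finite sequences -- follows because unions of pseudo-finite families are preserved under taking subsequences, so an arbitrary pseudo-finite sequence and my extracted sub-subsequence (whose union lies in $\setofprimes$) share the same union.

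For part~(ii) I argue by contradiction. Suppose $I=\bigcap_{j=1}^k J_j$ where each $J_j=\bigcap\setofprimes_j$ for a pseudo-finite family $\setofprimes_j$. For every non-limiting level-$(d-1)$ element $\xi_i$, the prime $P_{\xi_i}$ contains $I=\bigcap_j J_j$; primeness of $P_{\xi_i}$ applied to this finite intersection gives $J_{j(i)}\subset P_{\xi_i}$ for some $j(i)\in\{1,\ldots,k\}$. Since $d>1$ there are infinitely many $\xi_i$ but only $k$ labels, so pigeonhole furnishes $j^*$ and an infinite $S\subset\naturals$ with $J_{j^*}\subset P_{\xi_i}$ for every $i\in S$.

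The remaining -- and main -- obstacle is to derive a contradiction with the pseudo-finiteness of $\setofprimes_{j^*}$. My plan is to exploit the witness functions from property~(a) of Theorem~\ref{examples}: the $f_{\xi_i}$ satisfy $f_{\xi_i}\notin P_{\xi_i}$ while $f_{\xi_i}\in P_{\xi_{i'}}$ for all $i'\ne i$, since distinct $\xi_i$ are pairwise incomparable in $\ll$. Writing $Q_{j^*}=\bigcup\setofprimes_{j^*}$, I distinguish two cases. If $Q_{j^*}\subset P_{\xi_i}$ for every $i\in S$, then a nested pigeonhole within the depth-$(d-1)$ subtrees $\kappa_i$, iterating property~(c) at the level of each $\xi_i$ to descend to leaves, should isolate an element of $Q_{j^*}$ which is missing from infinitely many members of $\setofprimes_{j^*}$, contradicting pseudo-finiteness. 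If on the other hand some $P_0\in\setofprimes_{j^*}$ fails to lie in $P_{\xi_{i_0}}$, the explicit witnesses $f_{\xi_i}$ themselves should produce an element of $Q_{j^*}$ contained in many but not all members of $\setofprimes_{j^*}$. The delicate point is to ensure the diagonal/combinatorial step genuinely produces a single offending element rather than only element-by-element exceptions, and to handle primes in $\setofprimes_{j^*}$ that need not coincide with any $P_\alpha$.
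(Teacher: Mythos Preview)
Your inductive argument for compactness is sound and is a legitimate alternative to the paper's route. The paper instead equips $\kappa$ with its order topology (compact metrizable), passes to a subsequence converging to some $\alpha\in\kappa$, and then uses properties~(b) and~(c) directly to show that a tail of that subsequence is pseudo-finite with union $P_\alpha$. Your induction on depth achieves the same thing; the ``third case'' in your argument is precisely the paper's computation at $\alpha=\max\kappa$, and your observation that a pseudo-finite sequence and any of its subsequences share the same union correctly handles the ``contains its unions'' half of compactness.

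For part~(ii), however, your argument has a genuine gap, and the two cases you sketch at the end do not close it. The difficulty you identify is real: the primes in $\setofprimes_{j^*}$ need not be any $P_\alpha$, so the witness functions $f_{\xi_i}$ give you no direct grip on membership in the $Q\in\setofprimes_{j^*}$. The paper sidesteps this entirely with a rigidity argument. It first reduces $\setofprimes=\bigcup_j\setofprimes_j$ to its minimal elements (subfamilies of pseudo-finite families remain pseudo-finite, so the hypothesis is preserved). Then, using Lemma~\ref{nonredundancy} (whose proof needs only relative compactness) together with Lemma~\ref{equivalent_primes}, it shows that the reduced $\setofprimes$ is \emph{exactly} the set of primes of the form $\quotoel{I}{f}$. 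The same reasoning applied to the compact family $\{P_\alpha\}$ shows that $\{P_\alpha:\ l(\alpha)=0\}$ is also exactly this set. Hence $\setofprimes=\{P_\alpha:\ l(\alpha)=0\}$: any hypothetical decomposition is forced to use precisely the leaf primes.

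Once this identification is made, your pigeonhole idea works cleanly: for $d>1$ each $\xi_i$ has infinitely many leaves $\beta\ll\xi_i$, and $f_{\xi_i}\notin P_\beta$ for every such leaf (since $P_\beta\subset P_{\xi_i}$ and $f_{\xi_i}\notin P_{\xi_i}$), while $f_{\xi_i}\in P_\gamma$ for every leaf $\gamma$ under any other $\xi_{i'}$. So if some $\setofprimes_{j^*}$ contained infinitely many leaves under one $\xi_{i_0}$ and at least one leaf under another $\xi_{i_1}$, the element $f_{\xi_{i_0}}$ would violate pseudo-finiteness; and pigeonhole forces this to happen. The missing ingredient in your proposal is precisely the canonical description of the minimal primes as $\{\quotoel{I}{f}:\ \text{prime}\}$, which eliminates the ``primes in $\setofprimes_{j^*}$ that need not coincide with any $P_\alpha$'' obstacle you flagged.
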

\begin{proof}
Consider the order topology on $\kappa$. Then $\kappa$ is a
compact metrizable space. Each sequence in $\kappa$ has a
convergence subsequence. The following claim will show the
compactness of $\set{P_\alpha:\ \alpha\in\kappa}$.

\emph{Claim: For each sequence $(\alpha_n)$ converging to
$\alpha$, there exists $n_0$ such that $(P_{\alpha_n}:\ n\ge n_0)$
is a pseudo-finite sequence whose union is $P_\alpha$. Proof:}
Without loss of generality, we assume that $\alpha_n\neq \alpha$
($n\in\naturals$). There exists $n_0$ such that
$\alpha_n\ll\alpha$ ($n\ge n_0$). We see that $P_{\alpha_n}\subset
P_\alpha$ ($n\ge n_0$) and
\[
    P_\alpha=\bigcup_{\beta\ll\alpha,\ l(\beta)=0} P_\beta.
\]
So, for each $g\in P_\alpha$, by Theorem \ref{examples}(c), there
exists $\beta_1\ll\alpha$ such that $l(\beta_1)=0$ and that $g\in
P_\beta$ for all $\beta_1\le\beta\le\alpha$ with $l(\beta)=0$.
Choose $n_1$  such that $\alpha_n\ge \beta_1$ ($n\ge n_1$). For
each $n\ge n_1$, pick $\beta'\ll\alpha_n$ and $l(\beta')=0$. If
$\beta'<\beta_1$, then $\beta_1\ll\alpha_n$, and so $g\in
P_{\beta_1}\subset P_{\alpha_n}$. Otherwise, $\beta_1\le\beta'$,
then $g\in P_{\beta'}\subset P_{\alpha_n}$. Thus, $(P_{\alpha_n}:\
n\ge n_0)$ is a pseudo-finite sequence whose union is $P_\alpha$.

Suppose that $I=\bigcap_{\alpha\in\kappa} P_\alpha$ is the
intersection of a family $\setofprimes$ that can be decomposed
into finitely many pseudo-finite subfamilies of prime ideals. By
keeping only the minimal elements of $\setofprimes$, we can assume
that $P\nsubset Q$ for each $P\neq Q\in\setofprimes$. Lemma
\ref{nonredundancy} shows that, for each $P\in\setofprimes$ there
exists $f_P\notin P$ but $f_P\in Q$ for all
$Q\in\setofprimes\setminus\set{P}$. This and Lemma
\ref{equivalent_primes} then implies that $\setofprimes$ is the
set of prime ideals of the form $\quotoel{I}{f}$ for some
$f\in\C_0(\Omega)$. Similarly, $\set{P_\alpha:\ \alpha\in\kappa,\
l(\alpha)=0}$ is the set of prime ideals of the form
$\quotoel{I}{f}$ for some $f\in\C_0(\Omega)$. Thus
\[
    \setofprimes=\set{P_\alpha:\ \alpha\in\kappa,\ l(\alpha)=0},
\]
and obviously this cannot be the union of any finite number of
pseudo-finite families when the depth of $\kappa$ is bigger than
$1$.
\end{proof}


\providecommand{\bysame}{\leavevmode\hbox
to3em{\hrulefill}\thinspace}
\providecommand{\MR}{\relax\ifhmode\unskip\space\fi MR }
\providecommand{\MRhref}[2]{%
  \href{http://www.ams.org/mathscinet-getitem?mr=#1}{#2}
} \providecommand{\href}[2]{#2}

\end{document}